\let\emptyset\varnothing
\DeclareMathOperator{\Aut}{Aut}
\DeclareMathOperator{\Crit}{Crit}
\DeclareMathOperator{\CZ}{CZ}
\DeclareMathOperator{\Diff}{Diff}
\DeclareMathOperator{\ev}{ev}
\DeclareMathOperator{\Fix}{Fix}
\DeclareMathOperator{\grad}{grad}
\DeclareMathOperator{\Ham}{Ham}
\DeclareMathOperator{\Hess}{Hess}
\DeclareMathOperator{\HF}{HF}
\DeclareMathOperator{\HM}{HM}
\DeclareMathOperator{\id}{id}
\DeclareMathOperator{\im}{im}
\DeclareMathOperator{\ind}{ind}
\DeclareMathOperator{\Int}{Int}
\DeclareMathOperator{\loc}{loc}
\DeclareMathOperator{\ord}{ord}
\DeclareMathOperator{\pr}{pr}
\DeclareMathOperator{\RFC}{RFC}
\DeclareMathOperator{\RFH}{RFH}
\DeclareMathOperator{\SH}{SH}
\DeclareMathOperator{\Sp}{Sp}
\DeclareMathOperator{\Spec}{Spec}
\DeclareMathOperator{\supp}{supp}
\DeclareMathOperator{\Symp}{Symp}
\newcommand{\bld}[1]{\boldmath\textit{\textbf{#1}}\unboldmath}
\newtheoremstyle{main} 		             	 		
  	{}	                                     		
  	{}	                                    		
  	{\itshape}			                     		
  	{}        	                             		
  	{\boldmath\bfseries}   	                         		
  	{.}            	                        		
  	{ }           	                         		
  	{\thmname{#1}\thmnumber{ #2}\thmnote{ (#3)}}	
\theoremstyle{main}
\newtheorem{definition}{Definition}[section]
\newtheorem{proposition}{Proposition}[section]
\newtheorem{corollary}{Corollary}[section]
\newtheorem{theorem}{Theorem}[section]
\newtheorem{lemma}{Lemma}[section]
\newtheoremstyle{nonit} 		             	 		
  	{}	                                     		
  	{}	                                    		
  	{}			                     		
  	{}        	                             		
	{\boldmath\bfseries}   	                         		
  	{.}            	                        		
  	{ }           	                         		
  	{\thmname{#1}\thmnumber{ #2}\thmnote{ (#3)}}	
\theoremstyle{nonit}
\newtheorem{remark}{Remark}[section]
\newtheoremstyle{ex} 		             	 		
  	{}	                                     		
  	{}	                                    		
  	{\small}			                     		
  	{}        	                             		
  	{\bfseries\boldmath}   	                         		
  	{.}            	                        		
  	{ }           	                         		
  	{\thmname{#1}\thmnumber{ #2}\thmnote{ (#3)}}	
\theoremstyle{ex}
\begin{document}

\title{First Steps in Twisted Rabinowitz--Floer Homology
}


\author{Yannis B\"ahni
}

\maketitle

\begin{abstract}
Rabinowitz--Floer homology is the Morse--Bott homology in the sense of Floer associated with the Rabinowitz action functional introduced by Kai Cieliebak and Urs Frauenfelder in 2009. In our work, we consider a generalisation of this theory to a Rabinowitz--Floer homology of a Liouville automorphism. As an application, we show the existence of noncontractible periodic Reeb orbits on quotients of symmetric star-shaped hypersurfaces. In particular, our theory applies to lens spaces.
\end{abstract}

\section{Introduction}
\label{sec:introduction}

In this paper, we introduce an analogue of the twisted Floer homology \cite{uljarevic:liouville:2017} in the Rabinowitz--Floer setting. See the excellent survey article \cite{albersfrauenfelder:rfh:2012} for a brief introduction to Rabinowitz--Floer homology and \cite{schlenk:floer:2019} for an overview of common different Floer theories. Following  \cite{cieliebakfrauenfelder:rfh:2009} and \cite{albersfrauenfelder:rfh:2010}, we construct a Morse--Bott homology for a suitable twisted version of the standard Rabinowitz action functional, generalising the standard Rabinowitz action functional.

\begin{theorem}[Twisted Rabinowitz--Floer Homology]
	Let $(M,\lambda)$ be the completion of a Liouville domain $(W,\lambda)$ and let $\varphi \in \Diff(W)$ be of finite order near the boundary $\partial W$ with $\varphi(\partial W) = \partial W$ and $\varphi^* \lambda - \lambda = df_\varphi$ for some smooth compactly supported function $f_\varphi \in C^\infty_c(\Int W)$ in the interior of $W$.

	\begin{enumerate}[label=\textup{(\alph*)}]
		\item The semi-infinite dimensional Morse--Bott homology $\RFH^\varphi(\partial W,M)$ in the sense of Floer of the twisted Rabinowitz action functional exists and is well-defined. Moreover, twisted Rabinowitz--Floer homology is invariant under twisted homotopies of Liouville domains.
		\item If $\partial W$ is simply connected and does not admit any nonconstant twisted Reeb orbit, then $\RFH^\varphi_*(\partial W,M) \cong \operatorname{H}_*(\Fix(\varphi\vert_{\partial W});\mathbb{Z}_2)$.
		\item If $\partial W$ is displaceable by a compactly supported Hamiltonian symplectomorphism in $(M,\lambda)$, then $\RFH^\varphi(\partial W,M) \cong 0$.
	\end{enumerate}
	\label{thm:twisted_rfh}
\end{theorem}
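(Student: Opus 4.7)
The plan for part (a) is to mirror the cascade construction of Cieliebak--Frauenfelder \cite{cieliebakfrauenfelder:rfh:2009}, adapted to the $\varphi$-twisted boundary condition along the lines of \cite{uljarevic:liouville:2017}. On the twisted loop space
\[
\mathcal{L}_\varphi(M) := \{u \in C^\infty([0,1], M) : u(1) = \varphi(u(0))\}
\]
one defines a twisted Rabinowitz action functional
\[
\mathcal{A}^{H,\varphi}(u,\eta) := -\int_0^1 u^*\lambda + f_\varphi(u(0)) - \eta\int_0^1 H(t,u(t))\,dt
\]
for a $\varphi$-equivariant defining Hamiltonian $H$ of $\partial W$. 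The compensating term $f_\varphi$ is forced by the failure of $\varphi$ to preserve $\lambda$, and its compactly supported interior hypothesis guarantees that the functional behaves well near $\partial W$. Critical points are exactly the twisted Reeb orbits on $\partial W$, together with $\Fix(\varphi\vert_{\partial W}) \times \{0\}$ as a Morse--Bott component at action zero.

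One then constructs the $L^2$-negative gradient moduli spaces and establishes, in order: Fredholm and transversality theory for generic auxiliary data; exponential decay at ends; a Morse--Bott cascade chain complex $\RFC^\varphi$; and invariance under twisted homotopies via continuation maps, with the compensating potentials varied smoothly alongside the Liouville data. The decisive analytic step, which I expect to be the main obstacle, is the $L^\infty$-bound on the Lagrange multiplier $\eta$ along gradient lines of bounded energy: in the untwisted setting this hinges on a transverse nondegeneracy estimate for $H$ along $\partial W$, and one must verify that this estimate persists under the twisted boundary condition. Here the finite-order assumption on $\varphi$ near $\partial W$ plays a key role, since it allows local averaging over the $\varphi$-orbit to reduce the twisted estimate to the untwisted one. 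Working with $\mathbb{Z}_2$-coefficients removes the need to discuss orientations.

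For part (b), the hypothesis forces the critical set of $\mathcal{A}^{H,\varphi}$ to coincide with $\Fix(\varphi\vert_{\partial W}) \times \{0\}$, a single Morse--Bott component sitting at action zero. A cascade of positive length in the chain complex would, after compactification, produce a nonconstant twisted Reeb orbit by the usual broken-trajectory analysis; this is forbidden, so all cascades are trivial and $\RFC^\varphi$ reduces to the Morse complex of an auxiliary Morse function on $\Fix(\varphi\vert_{\partial W})$, computing its singular homology with $\mathbb{Z}_2$-coefficients. Simple connectivity of $\partial W$ is invoked to rule out sphere-bubbling and capping ambiguities that might otherwise obstruct the degeneration.

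For part (c), we adapt the Cieliebak--Frauenfelder displacement argument to the twisted setting, following \cite{uljarevic:liouville:2017}. If $\psi$ displaces $\partial W$ and is generated by $F \in C^\infty_c([0,1]\times M)$, one deforms $\mathcal{A}^{H,\varphi}$ by a one-parameter family of perturbations built from $F$ and produces, via the associated continuation moduli spaces, a chain homotopy between the identity and the zero map on $\RFC^\varphi$. The key input is that displaceability yields a uniform action bound on these moduli spaces, which together with the estimates from part (a) secures compactness. The resulting vanishing at chain level gives $\RFH^\varphi(\partial W, M) \cong 0$.
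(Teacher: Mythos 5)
Your overall scaffolding matches the paper's: adapt the Cieliebak--Frauenfelder cascade Rabinowitz--Floer theory to twisted boundary conditions in the spirit of Uljarevic, with the compensating term $f_\varphi(\gamma(0))$ in the action, a $\varphi$-equivariant defining Hamiltonian, and a $\varphi$-twisted family of almost complex structures. However, two of the mechanisms you invoke are not the ones that close the argument, and one of them would leave a genuine gap if carried out literally.

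For part (b), your claim that a cascade of positive length would, upon compactification, ``produce a nonconstant twisted Reeb orbit'' does not hold: the limit of broken cascades connecting two constant critical points need not pass through a nonconstant twisted orbit. The argument that actually works is an action argument. Every critical point in $\Fix(\varphi\vert_{\partial W}) \times \{0\}$ sits at action level $\tau = 0$ because of the period--action equality $\mathscr{A}^H_\varphi(\gamma,\tau) = \tau$, and the action strictly decreases along every nonconstant negative gradient flow line, so nonconstant cascades between them are impossible and the chain complex degenerates to the Morse complex of $h$ on $\Fix(\varphi\vert_{\partial W})$. Moreover, simple connectivity of $\partial W$ is not what rules out sphere-bubbling (exactness of $(M,\lambda)$ already does that); it guarantees that the constant twisted loops $c_x$, for $x \in \Fix(\varphi\vert_{\partial W})$, all lie in a single path component of $\mathscr{L}_\varphi(\partial W)$ and provides the reference paths needed to define the transverse Conley--Zehnder grading. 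In part (a), the finite-order hypothesis is likewise misattributed: the Fundamental Lemma controlling the Lagrange multiplier carries over because $\norm{f_\varphi}_\infty < \infty$, which follows from compact support alone, not from any averaging over $\varphi$-orbits. Finite order near the boundary is used instead to construct a $\varphi$-invariant compatible almost complex structure, to identify the twisted functional with the ordinary Rabinowitz functional of the $m$-fold iterate $\bar\gamma(t) = \gamma(mt)$ via Remark \ref{rem:finite_order}, and in the genericity argument for the Morse--Bott condition. Your outline of part (c) is in the paper's spirit; the precise mechanism there is to show via twisted leaf-wise intersection points that the perturbed functional $\mathscr{A}^{\mathfrak{M}}_\varphi$ has empty critical set when $\partial W$ is displaceable, from which vanishing follows by continuation.
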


Part (a) will be proven in Sections \ref{sec:definition} and \ref{sec:invariance}, in particular Theorem \ref{thm:invariance}, part (b) is the content of Proposition \ref{prop:fixed_points} and finally part (c) is the content of Theorem \ref{thm:displaceable}. On the one hand, twisted Rabinowitz--Floer homology does generalise standard Rabinowitz--Floer homology as
\begin{equation*}
	\RFH^{\id_W}(\partial W,M) = \RFH(\partial W,M),
\end{equation*}
\noindent on the other hand, twisted Rabinowitz--Floer homology can be used to prove existence of noncontractible periodic Reeb orbits. Related results appeared in \cite[Corollary~1.6~(iv)]{sandon:reeb:2020} and \cite[Theorem~1.2]{liuzhang:noncontractible:2021}.

\begin{theorem}
	Let $\Sigma \subseteq \mathbb{C}^n$, $n \geq 2$, be a compact and connected star-shaped hypersurface invariant under the rotation	
	\begin{equation*}
		\varphi \colon \mathbb{C}^n \to \mathbb{C}^n, \quad \varphi(z^1,\dots,z^n) := \del[1]{e^{2\pi i k_1/m}z^1,\dots,e^{2\pi i k_n/m}z^n}
	\end{equation*}
	\noindent for some even $m \geq 2$ and $k_1,\dots,k_n \in \mathbb{Z}$ coprime to $m$. Then $\Sigma/\mathbb{Z}_m$ admits a noncontractible periodic Reeb orbit.
	\label{thm:noncontractible}
\end{theorem}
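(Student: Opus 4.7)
The plan is to find a nonconstant $\psi$-twisted closed Reeb orbit on $\Sigma$ for $\psi := \varphi^{m/2}$, i.e.\ a Reeb arc $\gamma\colon [0,T]\to \Sigma$ with $\gamma(T)=\psi(\gamma(0))$. Such an arc descends to a closed Reeb orbit on $\Sigma/\mathbb{Z}_m$ representing the class $[\psi] \in \pi_1(\Sigma/\mathbb{Z}_m)\cong \mathbb{Z}_m$ (this identification being valid because $\Sigma$ is simply connected for $n\ge 2$); the class is nontrivial since $m/2\not\equiv 0 \pmod m$ for $m\ge 2$, which yields the required noncontractible orbit.

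Before invoking Theorem \ref{thm:twisted_rfh}, I would verify its hypotheses for $\psi$. As $m$ is even and every $k_j$ is coprime to $m$, each $k_j$ is odd, so $\psi(z)=-z$; this is unitary and therefore preserves the standard Liouville form $\lambda=\tfrac{1}{2}\sum_j(x^j\,dy^j - y^j\,dx^j)$ exactly, giving $f_\psi=0$. Moreover $\psi$ is of order two and preserves $\partial W$, where $W\subseteq \mathbb{C}^n$ is the star-shaped Liouville domain bounded by $\Sigma$. Crucially, $\Fix(\psi|_\Sigma)=\emptyset$, since $-z=z$ forces $z=0\notin \Sigma$.

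The argument then proceeds by the contrapositive of Theorem \ref{thm:twisted_rfh}(b): if one can show $\RFH^\psi_*(\Sigma, \mathbb{C}^n)\not\cong 0=\operatorname{H}_*(\Fix(\psi|_\Sigma);\mathbb{Z}_2)$, then the simply connected $\Sigma$ must carry a nonconstant $\psi$-twisted Reeb orbit. I would establish this nonvanishing via invariance (part (a)): the $\psi$-equivariant radial homotopy through $\psi$-invariant star-shaped hypersurfaces connecting $\Sigma$ to the unit sphere induces an isomorphism
\begin{equation*}
	\RFH^\psi(\Sigma,\mathbb{C}^n)\cong \RFH^\psi(S^{2n-1},\mathbb{C}^n),
\end{equation*}
so it suffices to compute the right-hand side. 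On $S^{2n-1}$ the Reeb flow is the Hopf flow $z\mapsto e^{it}z$, whence the $\psi$-twisted Reeb orbits are exactly the half-Hopf arcs $t\mapsto e^{i\pi t}z_0$ (together with their odd iterates), forming Morse--Bott critical manifolds. A direct Morse--Bott spectral-sequence computation, combined with Conley--Zehnder indices of iterates of the Hopf flow, should then show $\RFH^\psi(S^{2n-1},\mathbb{C}^n)\ne 0$.

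The main obstacle is this nonvanishing computation on the sphere. On its face it seems to conflict with Theorem \ref{thm:twisted_rfh}(c), since $S^{2n-1}$ is Hamiltonianly displaceable in $\mathbb{C}^n$; the resolution is that in the twisted setting the operative notion of displacement must be $\psi$-compatible, and $\psi$-equivariant Hamiltonians cannot displace a $\psi$-symmetric sphere encircling the unique $\psi$-fixed point $0\in\mathbb{C}^n$. Verifying this compatibility and carrying out the index calculation along the Morse--Bott family of half-Hopf arcs constitute the analytic core of the argument, in line with the classical Rabinowitz--Floer computation on standard spheres.
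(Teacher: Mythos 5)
Your reduction to $\psi := \varphi^{m/2} = -\mathrm{id}$, the observation that $\Fix(\psi|_\Sigma)=\emptyset$, and the idea of running the contrapositive of Theorem~\ref{thm:twisted_rfh}(b) are all reasonable, but the argument founders at the crucial nonvanishing step. By Theorem~\ref{thm:displaceable} (equivalently, Theorem~\ref{thm:twisted_rfh}(c)), one has $\RFH^\psi(\mathbb{S}^{2n-1},\mathbb{C}^n)\cong 0$, because $\mathbb{S}^{2n-1}$ is displaceable by a compactly supported Hamiltonian symplectomorphism of $\mathbb{C}^n$ (a cut-off translation). Your claimed resolution --- that the operative notion of displacement must be $\psi$-compatible --- is a misreading: the hypothesis of the theorem is plain displaceability, with no equivariance assumption on the displacing Hamiltonian, and the proof confirms this. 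Lemma~\ref{lem:twisted_Moser_pair} takes an \emph{arbitrary} $\varphi_F\in\Ham_c(M,d\lambda)$ and manufactures a twisted Moser pair whose time-one map on $[0,1]$ is still $\varphi_F$; the $\varphi$-equivariance of the resulting perturbation $\tilde{F}$ is achieved by hand via the formula $\tilde{F}(x,t)=\dot\rho(t-k)F(\varphi^{-k}(x),\rho(t-k))$, so no compatibility of the original displacement with $\psi$ is needed. Since star-shaped $\Sigma$ is likewise displaceable, $\RFH^\psi(\Sigma,\mathbb{C}^n)=0$ as well, and Theorem~\ref{thm:twisted_rfh}(b) gives you no information.

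What actually carries the argument in the paper is the $\mathbb{Z}_m$-\emph{equivariant} twisted Rabinowitz--Floer homology $\overline{\RFH}^\varphi$, a different invariant obtained by dividing the chain complex by the free $\mathbb{Z}_m$-action. Theorem~\ref{thm:equivariant_twisted_rfh} shows $\overline{\RFH}^\varphi_k(\mathbb{S}^{2n-1}/\mathbb{Z}_m)\cong\mathbb{Z}_2$ for all $k$ when $m$ is even, and that the generators are noncontractible Reeb orbits; the vanishing $\RFH^\varphi(\mathbb{S}^{2n-1},\mathbb{C}^n)=0$ is not an obstacle but is in fact \emph{used} in that proof to pin down the parity of the boundary maps in the ``string of pearls'' chain complex. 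The conclusion then follows from the $\varphi$-equivariant radial homotopy from $\Sigma$ to $\mathbb{S}^{2n-1}$ and Theorem~\ref{thm:invariance} as you sketched, but applied at the equivariant level. Without passing to $\overline{\RFH}^\varphi$ (or some other device to break the vanishing), your route cannot close.
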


The proof is straightforward, once we have computed the $\mathbb{Z}_m$-equivariant twisted Rabinowitz--Floer homology of the spheres $\mathbb{S}^{2n - 1} \subseteq \mathbb{C}^n$. Indeed, by invariance we may assume that $\Sigma = \mathbb{S}^{2n - 1}$, as $\Sigma$ is star-shaped. Then we use the following elementary topological fact (see Lemma \ref{lem:noncontractible} below). Let $\Sigma$ be a simply connected topological manifold and let $\varphi \colon \Sigma \to \Sigma$ be a homeomorphism of finite order $m$ that is not equal to the identity. If the induced discrete action
\begin{equation*}
	\mathbb{Z}_m \times \Sigma \to \Sigma, \qquad [k] \cdot x := \varphi^k(x)
\end{equation*}
\noindent is free, then $\pi \colon \Sigma \to \Sigma/\mathbb{Z}_m$ is a normal covering map \cite[Theorem~12.26]{lee:tm:2011}. For $x \in \Sigma$ define the \bld{based twisted loop space of $\Sigma$ and $\varphi$} by
\begin{equation*}
	\mathscr{L}_\varphi(\Sigma,x) := \cbr[0]{\gamma \in C(I,\Sigma) : \gamma(0) = x \text{ and } \gamma(1) = \varphi(x)},
\end{equation*}
\noindent where $I := \intcc[0]{0,1}$. Then we have the following result. See Figure \ref{fig:twisted_lift}.

\begin{lemma}
	If $\gamma \in \mathscr{L}_\varphi(\Sigma,x)$ for some $x \in \Sigma$, then $\pi \circ \gamma \in \mathscr{L}(\Sigma/\mathbb{Z}_m,\pi(x))$ is not contractible. Conversely, if $\gamma \in \mathscr{L}(\Sigma/\mathbb{Z}_m,\pi(x))$ is not contractible, then there exists $1 \leq k < m$ such that $\tilde{\gamma}_x \in \mathscr{L}_{\varphi^k}(\Sigma,x)$ for the unique lift $\tilde{\gamma}_x$ of $\gamma$ satisfying $\tilde{\gamma}_x(0) = x$.
	\label{lem:noncontractible}
\end{lemma}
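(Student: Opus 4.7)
The plan is to use the path and homotopy lifting properties of the normal covering $\pi \colon \Sigma \to \Sigma/\mathbb{Z}_m$. The only fact I need about the action is that freeness together with $m \geq 2$ implies $\varphi^k(x) \neq x$ for all $x \in \Sigma$ and every $1 \leq k < m$; in particular $\varphi$ is fixed-point free.

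For the forward direction I would argue by contradiction. Suppose $\pi \circ \gamma$ were null-homotopic rel basepoint via $H \colon I \times I \to \Sigma/\mathbb{Z}_m$ with $H(\cdot,0) = \pi \circ \gamma$ and $H$ constantly equal to $\pi(x)$ on the other three edges of $\partial(I \times I)$. By homotopy lifting there is a unique lift $\tilde{H} \colon I \times I \to \Sigma$ with $\tilde{H}(0,0) = x$. By uniqueness of path lifts, each of those three edges lifts to a constant path in $\Sigma$. Evaluating at the corner $(1,1)$ along $I \times \{1\}$ gives $\tilde{H}(1,1) = x$ (starting from $\tilde{H}(0,1) = x$), while evaluating along $\{1\} \times I$ gives $\tilde{H}(1,1) = \gamma(1) = \varphi(x)$ (starting from $\tilde{H}(1,0) = \gamma(1) = \varphi(x)$). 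This contradicts $\varphi(x) \neq x$.

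For the converse I would consider the lift $\tilde{\gamma}_x$ and observe that $\pi(\tilde{\gamma}_x(1)) = \gamma(1) = \pi(x)$, so $\tilde{\gamma}_x(1)$ lies in the fiber $\{\varphi^k(x) : 0 \leq k < m\}$ and hence $\tilde{\gamma}_x \in \mathscr{L}_{\varphi^k}(\Sigma,x)$ for a unique $0 \leq k < m$. To exclude $k = 0$: if $\tilde{\gamma}_x$ were a loop at $x$ in the simply connected space $\Sigma$, it would be null-homotopic rel basepoint, and pushing that null-homotopy forward by $\pi$ would contradict the non-contractibility of $\gamma$.

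The only real obstacle is the bookkeeping in the forward direction: one must carefully verify that all three "constant" boundary edges of $H$ actually lift to constant paths in $\Sigma$ and then reconcile the two lifted values at the corner $(1,1)$ to produce the contradiction. Everything else is a one-line application of the universal lifting property together with simple connectedness of $\Sigma$.
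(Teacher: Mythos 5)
Your proof is correct. The forward direction (lift the null-homotopy, propagate constancy along three boundary edges, and read off two incompatible values of $\tilde H(1,1)$) and the converse direction (the lift ends in the fiber, and $k=0$ would force $\tilde\gamma_x$ to be a loop in the simply connected $\Sigma$, whose null-homotopy would project to a contraction of $\gamma$) are both sound, and the only hypotheses you use — freeness of the action plus $\varphi\neq\id$, so $\varphi^k(x)\neq x$ for $1\le k<m$ — are exactly the ones available.

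The route differs from the paper's, though. The paper does not argue at the level of individual lifts; it first establishes a general statement (Theorem~\ref{thm:cd_twisted}(a)) giving homeomorphisms $\Psi_e \colon U_\eta \to \mathscr{L}_\varphi(\tilde{M},e)$ between the path component $U_\eta \subseteq \mathscr{L}(M,x)$ of a loop $\eta$ and the twisted based loop space of the deck transformation $\varphi$ determined by $\varphi(e)=\tilde\eta_e(1)$, together with the characterization that $U_{c_x}\cong\mathscr{L}_\varphi(\tilde M,e)$ precisely when $\varphi=\id$. Lemma~\ref{lem:noncontractible} then drops out by specializing to $\tilde M = \Sigma$, $M=\Sigma/\mathbb{Z}_m$, using the short exact sequence $0\to\pi_1(\Sigma)\to\pi_1(\Sigma/\mathbb{Z}_m)\to\pi_0(\mathbb{Z}_m)\to 0$ and the identification $\Aut_\pi(\Sigma)\cong\mathbb{Z}_m$. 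Your argument is the concrete engine behind the abstract statement: what you verify by hand with unique path lifting and homotopy lifting is precisely the monodromy correspondence that Theorem~\ref{thm:cd_twisted} packages once and for all. Your version is shorter and self-contained, good when the lemma is the only thing one needs; the paper's version buys a reusable homeomorphism (not merely a $\pi_0$-level bijection) and the fibre-bundle structure that the rest of Appendix~\ref{twisted_loops_on_universal_covering_manifolds} exploits.
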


\begin{figure}[h!tb]
	\centering
	\includegraphics[width=.69\textwidth]{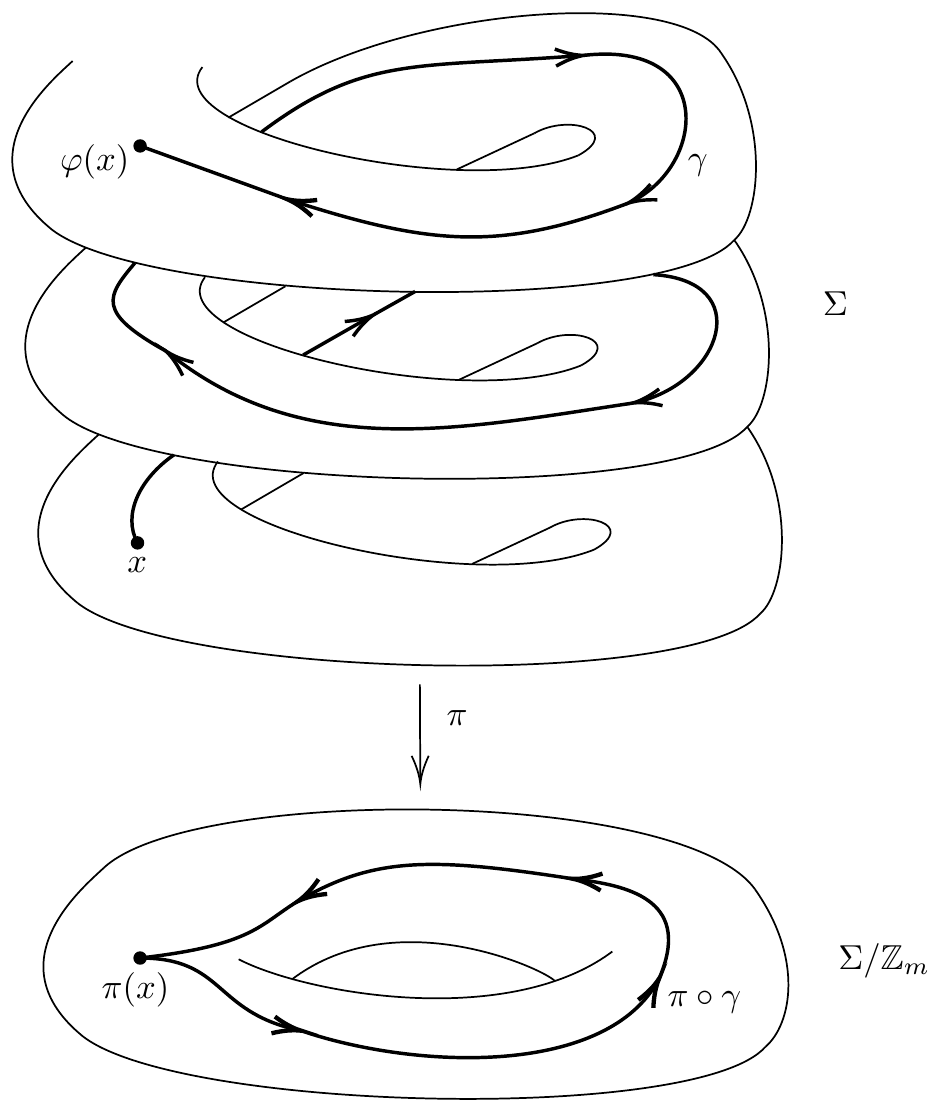}
	\caption{The projection $\pi \circ \gamma \in \mathscr{L}(\Sigma/\mathbb{Z}_m,\pi(x))$ of $\gamma \in \mathscr{L}_\varphi(\Sigma,x)$ is not contractible for the deck transformation $\varphi \neq \id_\Sigma$.}
	\label{fig:twisted_lift}
\end{figure}

For a more detailed study of twisted loop spaces of universal covering manifolds as well as a proof of Lemma \ref{lem:noncontractible} see Appendix \ref{twisted_loops_on_universal_covering_manifolds}. We put Theorem \ref{thm:noncontractible} into context. If $\Sigma^{2n - 1}/\mathbb{Z}_m$ satisfies the index condition
\begin{equation}
	\label{eq:index_condition}
	\mu_{\CZ}(\gamma) > 4 - n
\end{equation}
\noindent for all contractible Reeb orbits $\gamma$, the $\bigvee$-shaped symplectic homology $\check{\SH}(\Sigma)$ can be defined in the positive cylindrical end $\intco{0,+\infty} \times \Sigma$ by \cite[Corollary~3.7]{uebele:reeb:2019}. If $\Sigma/\mathbb{Z}_m$ admits a Liouville filling $W$, then we have 
\begin{equation*}
	\check{\SH}_*(\Sigma/\mathbb{Z}_m,M) \cong \RFH_*(\Sigma/\mathbb{Z}_m,M),
\end{equation*}
\noindent where $M$ denotes the completion of $W$. Note that even in the case of lens spaces this need not be the case, as for example $\mathbb{RP}^{2n - 1}$ is not Liouville fillable for any odd $n \geq 2$ by \cite[Theorem~1.1]{ghiggininiederkrueger:fillings:2020}. As the index condition \eqref{eq:index_condition} is only required for contractible Reeb orbits and they come from the universal covering manifold $\Sigma$, we can say something in the case where $\Sigma$ is strictly convex. Indeed, the Hofer--Wysocki--Zehnder Theorem \cite[Theorem~12.2.1]{frauenfelderkoert:3bp:2018} then implies that $\Sigma$ is dynamically convex, that is,
\begin{equation*}
	\mu_{\CZ}(\gamma) \geq n + 1
\end{equation*}
\noindent holds for all periodic Reeb orbits $\gamma$. Thus for $n \geq 2$, the index condition is satisfied and we actually compute $\check{\SH}_*(\mathbb{S}^{2n - 1}/\mathbb{Z}_m)$ via the $\mathbb{Z}_m$-equivariant version of $\check{\SH}_*(\mathbb{S}^{2n - 1})$.

In the case of hypertight contact manifolds, there is a similar construction without the index condition \eqref{eq:index_condition}. See for example \cite[Theorem~1.1]{meiwesnaef:hypertight:2018}. By \cite[Theorem~1.7]{meiwesnaef:hypertight:2018}, there do exist noncontractible periodic Reeb orbits on hypertight contact manifolds under suitable technical conditions. Moreover, one can show the existence of invariant Reeb orbits in this setting. See \cite[Corollary~1.6]{meiwesnaef:hypertight:2018} as well as \cite[Theorem~1.6]{merrynaef:invariant:2016} in the Liouville-fillable case.

\section{The Twisted Rabinowitz Action Functional}
\label{sec:the_twisted_Rabinowitz_action_functional}

\begin{definition}[Free Twisted Loop Space]
	Let $\varphi \in \Diff(M)$ be a diffeomorphism of a smooth manifold $M$. Define the \bld{free twisted loop space of $M$ and $\varphi$} by
	\begin{equation*}
		\mathscr{L}_\varphi M := \cbr[0]{\gamma \in C^\infty(\mathbb{R},M) : \gamma(t + 1) = \varphi(\gamma(t)) \> \forall t \in \mathbb{R}}.
	\end{equation*}
\end{definition}

Let $(M,\omega)$ be a symplectic manifold and $\varphi \in \Symp(M,\omega)$. Given a twisted loop $\gamma \in \mathscr{L}_\varphi M$ and $\varepsilon_0 > 0$, we say that a curve
\begin{equation*}
	\intoo[0]{-\varepsilon_0,\varepsilon_0} \to \mathscr{L}_\varphi M, \qquad \varepsilon \mapsto \gamma_\varepsilon
\end{equation*}
\noindent starting at $\gamma$ is \bld{smooth}, iff the induced variation
\begin{equation*}
	\mathbb{R} \times \intoo[0]{-\varepsilon_0,\varepsilon_0} \to M, \qquad (t,\varepsilon) \mapsto \gamma_\varepsilon(t)
\end{equation*}
\noindent is smooth. Since $\gamma_\varepsilon(t + 1) = \varphi(\gamma_\varepsilon(t))$ holds for all $\varepsilon \in \intoo[0]{-\varepsilon_0,\varepsilon_0}$ and $t \in \mathbb{R}$, we call such a variation a \bld{twisted variation}. Then the infinitesimal variation
\begin{equation*}
	\delta\gamma := \frac{\partial \gamma_\varepsilon}{\partial \varepsilon}\bigg\vert_{\varepsilon = 0} \in \mathfrak{X}(\gamma),
\end{equation*}
\noindent satisfies
\begin{equation*}
	\delta\gamma(t + 1) = D\varphi(\delta\gamma(t)) \qquad \forall t \in \mathbb{R}.
\end{equation*}

\begin{lemma}
	Let $(M,\omega)$ be a symplectic manifold and let $\varphi \in \Symp(M,\omega)$ be of finite order. Let $\gamma \in \mathscr{L}_\varphi M$ and let $X \in \mathfrak{X}(\gamma)$ be such that
	\begin{equation*}
		X(t + 1) = D\varphi(X(t)) \qquad \forall t \in \mathbb{R}.
	\end{equation*}
	Then there exists a twisted variation of $\gamma$ such that $\delta \gamma = X$.
	\label{lem:twisted_variation}
\end{lemma}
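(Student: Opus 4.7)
The natural plan is to use an exponential-map construction with respect to a $\varphi$-invariant Riemannian metric, so that $\varphi$ becomes an isometry and the twisted periodicity condition on $X$ translates into the twisted periodicity condition on the variation.

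First, I would use the hypothesis that $\varphi$ has finite order $m$ to produce a $\varphi$-invariant Riemannian metric on $M$ by averaging an arbitrary background metric $g_0$:
\begin{equation*}
	g := \frac{1}{m}\sum_{k = 0}^{m - 1} (\varphi^k)^* g_0.
\end{equation*}
By construction $\varphi^* g = g$, hence $\varphi$ is an isometry of $(M,g)$. The key consequence is the equivariance identity $\varphi \circ \exp_p = \exp_{\varphi(p)} \circ D\varphi_p$ on any geodesic ball around $p \in M$ on which the exponential map is defined.

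Next, I would set
\begin{equation*}
	\gamma_\varepsilon(t) := \exp_{\gamma(t)}\!\del[0]{\varepsilon X(t)}.
\end{equation*}
The image of $\gamma|_{\intcc{0,1}}$ is compact, and since $\varphi^m = \id$ near this image the entire image of $\gamma$ is contained in the compact set $\bigcup_{k = 0}^{m - 1} \varphi^k\!\del[0]{\gamma(\intcc{0,1})}$. Using the twisted periodicity $X(t+1) = D\varphi(X(t))$ together with the fact that $\varphi$ is a $g$-isometry, the norm $\abs{X(t)}_g$ is periodic in $t$ and hence bounded on $\mathbb{R}$. Therefore there exists $\varepsilon_0 > 0$ such that $\varepsilon X(t)$ lies in the domain of the exponential map for all $t \in \mathbb{R}$ and $\abs{\varepsilon} < \varepsilon_0$. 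Smoothness of $(t,\varepsilon) \mapsto \gamma_\varepsilon(t)$ then follows from smoothness of $\gamma$, $X$, and $\exp$.

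To finish, I would verify the twisted periodicity of $\gamma_\varepsilon$: for every $t \in \mathbb{R}$,
\begin{equation*}
	\gamma_\varepsilon(t + 1) = \exp_{\gamma(t+1)}\!\del[0]{\varepsilon X(t+1)} = \exp_{\varphi(\gamma(t))}\!\del[0]{\varepsilon D\varphi(X(t))} = \varphi\!\del[1]{\exp_{\gamma(t)}(\varepsilon X(t))} = \varphi(\gamma_\varepsilon(t)),
\end{equation*}
using the twisted periodicity of $\gamma$ and $X$ in the first equality and the isometry equivariance of $\exp$ in the third. Thus $\gamma_\varepsilon \in \mathscr{L}_\varphi M$ for every $\abs{\varepsilon} < \varepsilon_0$, and by a standard property of the exponential map,
\begin{equation*}
	\delta\gamma(t) = \frac{\partial}{\partial \varepsilon}\bigg\vert_{\varepsilon = 0}\exp_{\gamma(t)}\!\del[0]{\varepsilon X(t)} = X(t).
\end{equation*}
The only genuine subtlety is producing an invariant metric and ensuring global existence of $\gamma_\varepsilon$, both of which are immediate consequences of the finite-order assumption; no compactness of $M$ is needed because the twisted periodicity and the isometry property confine $\gamma$ and $X$ to a compact set up to the $\varphi$-action.
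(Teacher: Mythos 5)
Your proof is correct, and it is essentially the same strategy as the paper's: produce a $\varphi$-invariant Riemannian metric so that $\varphi$ becomes an isometry, then take the exponential variation $\gamma_\varepsilon(t) = \exp_{\gamma(t)}(\varepsilon X(t))$ and use the isometry-equivariance $\varphi \circ \exp_p = \exp_{\varphi(p)} \circ D\varphi_p$ to verify the twisted periodicity. The one place you diverge from the paper is how the invariant metric is obtained: you average an arbitrary background metric over the finite group generated by $\varphi$, whereas the paper invokes the existence of a $\varphi$-invariant $\omega$-compatible almost complex structure $J$ (a standard lemma in McDuff--Salamon, itself proved by an averaging argument) and uses the metric $m_J = \omega(J\cdot,\cdot)$. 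Your route is slightly more elementary and does not reference the symplectic structure at all; the paper's route is the symplectically natural one and produces a metric coming from a compatible almost complex structure, which is the kind of object used repeatedly elsewhere in the paper. Your justification of the uniform $\varepsilon_0$ via compactness of $\bigcup_{k=0}^{m-1}\varphi^k(\gamma(I))$ and boundedness of $\abs{X}_g$ is actually more explicit than the paper's brief appeal to "naturality of geodesics." One cosmetic remark: since $\varphi$ is assumed to have finite order globally, you can simply say $\varphi^m = \id_M$ rather than "$\varphi^m = \id$ near this image."
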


\begin{proof}
	As $\varphi$ is assumed to be of finite order, there exists a $\varphi$-invariant $\omega$-compatible almost complex structure $J$ on $M$ by \cite[Lemma~5.5.6]{mcduffsalamon:st:2017}. With respect to the induced Riemannian metric
	\begin{equation*}
		m_J := \omega(J\cdot,\cdot),
	\end{equation*}
	\noindent the symplectomorphism $\varphi$ is an isometry. Define the exponential variation
	\begin{equation*}
		\mathbb{R} \times \intoo[0]{-\varepsilon_0,\varepsilon_0} \to M, \qquad \gamma_\varepsilon(t) := \exp^{\nabla_J}_{\gamma(t)}(\varepsilon X(t)),
	\end{equation*}
	\noindent for $\varepsilon_0 > 0$ sufficiently small and $\nabla_J$ denoting the Levi--Civita connection associated with $m_J$. Such an $\varepsilon_0 > 0$ does exist by naturality of geodesics \cite[Corollary~5.14]{lee:dg:2018}. Then we compute
	\begin{align*}
		\gamma_\varepsilon(t + 1) &= \exp^{\nabla_J}_{\gamma(t + 1)}(\varepsilon X(t + 1))\\
		&= \exp^{\nabla_J}_{\varphi(\gamma(t))}(D\varphi(\varepsilon X(t)))\\
		&= \varphi \del[1]{\exp^{\nabla_J}_{\gamma(t)}(\varepsilon X(t))}\\
		&= \varphi(\gamma_\varepsilon(t))
	\end{align*}
	\noindent by naturality of the exponential map \cite[Proposition~5.20]{lee:dg:2018}.
\end{proof}

\begin{remark}
	The statement of Lemma \ref{lem:twisted_variation} remains true if $\ord \varphi = \infty$.
\end{remark}

This discussion together with Lemma \ref{lem:free_twisted_loop_space} motivates the following definition of the tangent space to the free twisted loop space.

\begin{definition}[Tangent Space to the Free Twisted Loop Space]
	Let $(M,\omega)$ be a symplectic manifold and $\varphi \in \Symp(M,\omega)$. For $\gamma \in \mathscr{L}_\varphi M$ define the \bld{tangent space to the free twisted loop space at $\gamma$} by
	\begin{equation*}
		T_\gamma\mathscr{L}_\varphi M := \cbr[0]{X \in \Gamma(\gamma^*TM) : X(t + 1) = D\varphi(X(t)) \> \forall t \in \mathbb{R}}.
	\end{equation*}
\end{definition}

\begin{definition}[Twisted Hamiltonian Function]
	Let $(M,\omega)$ be a symplectic manifold and $\varphi \in \Symp(M,\omega)$. A function $H \in C^\infty(M \times \mathbb{R})$ is said to be a \bld{twisted Hamiltonian function}, iff
	\begin{equation*}
		\varphi^*H_{t + 1} = H_t \qquad \forall t \in \mathbb{R}.
	\end{equation*}
	We denote the space of all twisted Hamiltonian functions by $C^\infty_\varphi(M \times \mathbb{R})$ and the subspace of all autonomous twisted Hamiltonian functions by $C^\infty_\varphi(M)$.
\end{definition}

Recall, that an exact symplectic manifold is by definition a pair $(M,\lambda)$ such that $(M,d\lambda)$ is a symplectic manifold. An exact symplectomorphism of an exact symplectic manifold $(M,\lambda)$ is a diffeomorphism $\varphi \in \Diff(M)$ such that $\varphi^*\lambda - \lambda$ is exact.

\begin{definition}[Perturbed Twisted Rabinowitz Action Functional]
	Let $(M,\lambda)$ be an exact symplectic manifold and $\varphi \in \Diff(M)$ an exact symplectomorphism with $\varphi^*\lambda - \lambda = df$. For $H,F \in C^\infty_\varphi(M \times \mathbb{R})$ define the \bld{perturbed twisted Rabinowitz action functional}
	\begin{equation*}
		\mathscr{A}^{(H,F)}_\varphi \colon \mathscr{L}_\varphi M \times \mathbb{R} \to \mathbb{R}
	\end{equation*}
	\noindent by
	\begin{equation*}
		\mathscr{A}^{(H,F)}_\varphi(\gamma,\tau) := \int_0^1 \gamma^*\lambda - \tau \int_0^1 H_t(\gamma(t))dt - \int_0^1 F_t(\gamma(t))dt - f(\gamma(0)).
	\end{equation*}
	If $F = 0$ and $H \in C^\infty_\varphi(M)$, we write $\mathscr{A}^H_\varphi$ for $\mathscr{A}^{(H,F)}_\varphi$ and call $\mathscr{A}^H_\varphi$ the \bld{twisted Rabinowitz action functional}.
	\label{def:perturbed_twisted_Rabinowitz_functional}
\end{definition}

\begin{remark}
	Assume that $m := \ord \varphi < \infty$. Then
	\begin{equation*}
		\mathscr{A}^{(H,F)}_\varphi(\gamma,\tau) = \frac{1}{m} \mathscr{A}^{(H,F)}(\bar{\gamma},\tau) - \frac{1}{m} \sum_{k = 0}^{m - 1}f(\gamma(k)),
	\end{equation*}
	\noindent for all $(\gamma,\tau) \in \mathscr{L}_\varphi M$, where $\bar{\gamma} \in \mathscr{L} M$ is defined by $\bar{\gamma}(t) := \gamma(mt)$ and
	\begin{equation*}
		\mathscr{A}^{(H,F)} \colon \mathscr{L} M \times \mathbb{R} \to \mathbb{R}
	\end{equation*}
	\noindent denotes the standard Rabinowitz action functional.
	\label{rem:finite_order}
\end{remark}

\begin{definition}[Differential of the Perturbed Twisted Rabinowitz Action Functional]
	Let $\varphi \in \Diff(M)$ be an exact symplectomorphism of an exact symplectic manifold $(M,\lambda)$. For $H,F \in C^\infty_\varphi(M \times \mathbb{R})$, define the \bld{differential of the perturbed twisted Rabinowitz action functional}
	\begin{equation*}
		d\mathscr{A}^{(H,F)}_\varphi\vert_{(\gamma,\tau)} \colon T_\gamma \mathscr{L}_\varphi M \times \mathbb{R} \to \mathbb{R}
	\end{equation*}
	\noindent for all $(\gamma,\tau) \in \mathscr{L}_\varphi M \times \mathbb{R}$ by
	\begin{equation*}
		d\mathscr{A}^{(H,F)}_\varphi\vert_{(\gamma,\tau)}(X,\eta) := \frac{d}{d\varepsilon}\bigg\vert_{\varepsilon = 0}\mathscr{A}^{(H,F)}_\varphi(\gamma_\varepsilon, \tau + \varepsilon \eta),
	\end{equation*}
	\noindent where $\gamma_\varepsilon$ is a twisted variation of $\gamma$ such that $\delta \gamma = X$.
\end{definition}

\begin{proposition}[Differential of the Perturbed Twisted Rabinowitz Action Functional]
	Let $\varphi \in \Diff(M)$ be an exact symplectomorphism of an exact symplectic manifold $(M,\lambda)$ and $H,F \in C^\infty_\varphi(M \times \mathbb{R})$. Then
	\begin{multline}
		d\mathcal{\mathscr{A}}^{(H,F)}_\varphi \vert_{(\gamma,\tau)}(X,\eta) = \int_0^1 d\lambda(X(t),\dot{\gamma}(t) - \tau X_{H_t}(\gamma(t)) -  X_{F_t}(\gamma(t)))dt\\
		- \eta\int_0^1 H_t(\gamma(t))dt
		\label{eq:differential_perturbed_twisted_Rabinowitz_functional}
	\end{multline}
	\noindent for all $(\gamma,\tau) \in \mathscr{L}_\varphi M \times \mathbb{R}$ and $(X,\eta) \in T_\gamma\mathscr{L}_\varphi M \times \mathbb{R}$. Moreover, we have that
	\begin{equation*}
		(\gamma,\tau) \in \Crit \mathscr{A}^{(H,F)}_\varphi
	\end{equation*}
	\noindent if and only if
	\begin{equation}
		\dot{\gamma}(t) = \tau X_{H_t}(\gamma(t)) + X_{F_t}(\gamma(t)) \qquad \text{and} \qquad \int_0^1 H_t(\gamma(t))dt = 0
		\label{eq:critical_points}
	\end{equation}
	\noindent for all $t \in \mathbb{R}$.
	\label{prop:differential_perturbed_twisted_Rabinowitz_functional}
\end{proposition}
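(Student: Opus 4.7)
The plan is to differentiate each of the four summands in the definition of $\mathscr{A}^{(H,F)}_\varphi$ separately along an arbitrary twisted variation $\gamma_\varepsilon$ with $\delta\gamma = X$ (produced by Lemma \ref{lem:twisted_variation}), and to observe that the boundary term that inevitably appears in the symplectic-action piece is cancelled, term by term, by the variation of $-f(\gamma(0))$ via the exactness hypothesis $\varphi^*\lambda - \lambda = df$.

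First, for the symplectic action, I would introduce the two-parameter map $u(t,\varepsilon) := \gamma_\varepsilon(t)$ and exploit $u^*d\lambda = d(u^*\lambda)$ on the square $[0,1]\times[-\varepsilon_0,\varepsilon_0]$, evaluating the coordinate expression $u^*\lambda = \lambda(\partial_t u)\,dt + \lambda(\partial_\varepsilon u)\,d\varepsilon$ to obtain
\begin{equation*}
\frac{d}{d\varepsilon}\bigg|_{\varepsilon=0}\int_0^1 \gamma_\varepsilon^*\lambda = \int_0^1 d\lambda\bigl(X(t),\dot\gamma(t)\bigr)\,dt + \lambda_{\gamma(1)}(X(1)) - \lambda_{\gamma(0)}(X(0)).
\end{equation*}
Inserting $\gamma(1) = \varphi(\gamma(0))$ and $X(1) = D\varphi(X(0))$ rewrites the boundary contribution as $(\varphi^*\lambda - \lambda)_{\gamma(0)}(X(0)) = df_{\gamma(0)}(X(0))$, which is precisely the negative of the variation of $-f(\gamma(0))$, so the two cancel. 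The Hamiltonian pieces are then a straightforward chain and product rule: differentiating $-(\tau+\varepsilon\eta)\int_0^1 H_t(\gamma_\varepsilon)\,dt$ in $\varepsilon$ yields $-\eta\int_0^1 H_t(\gamma)\,dt - \tau\int_0^1 dH_t(X)\,dt$, and the defining identity $i_{X_{H_t}}d\lambda = -dH_t$ converts $dH_t(X)$ into $d\lambda(X,X_{H_t})$; the $F$-summand is identical without the $\tau$. Assembling the four pieces gives \eqref{eq:differential_perturbed_twisted_Rabinowitz_functional}.

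For the critical point characterization, taking $(X,\eta) = (0,1)$ at once forces $\int_0^1 H_t(\gamma)\,dt = 0$. Setting $\eta = 0$ and abbreviating $Y(t) := \dot\gamma(t) - \tau X_{H_t}(\gamma(t)) - X_{F_t}(\gamma(t))$, the remaining condition is that $\int_0^1 d\lambda(X,Y)\,dt = 0$ for every $X \in T_\gamma\mathscr{L}_\varphi M$, and I want to conclude $Y\equiv 0$. A preliminary observation is that $Y$ itself satisfies the twisted equivariance $Y(t+1) = D\varphi(Y(t))$: the contribution of $\dot\gamma$ follows by differentiating $\gamma(t+1) = \varphi(\gamma(t))$, while $\varphi^*H_{t+1} = H_t$ combined with $\varphi \in \Symp(M,d\lambda)$ forces $X_{H_{t+1}}\circ\varphi = D\varphi\circ X_{H_t}$ (and similarly for $F$).

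The main obstacle is then the twisted fundamental lemma of the calculus of variations, for which I would select a $\varphi$-invariant $d\lambda$-compatible almost complex structure $J$ via \cite[Lemma~5.5.6]{mcduffsalamon:st:2017}, as already used in the proof of Lemma \ref{lem:twisted_variation}. The $\varphi$-invariance of $J$ ensures that the test field $X := JY$ again lies in $T_\gamma\mathscr{L}_\varphi M$, and substituting it produces
\begin{equation*}
0 = \int_0^1 d\lambda(JY,Y)\,dt = -\int_0^1 g_J(Y,Y)\,dt,
\end{equation*}
where $g_J := d\lambda(\cdot,J\cdot)$ is the induced Riemannian metric; positive-definiteness of $g_J$ then forces $Y \equiv 0$, which is the required Hamiltonian equation in \eqref{eq:critical_points}.
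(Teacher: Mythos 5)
Your computation of the differential and the cancellation of the boundary term by the variation of $-f(\gamma(0))$ is correct and is exactly what the paper's ``routine computation'' hides; your conversion $dH_t(X) = d\lambda(X, X_{H_t})$ uses the paper's sign convention $i_{X_H}d\lambda = -dH$ consistently. The interesting divergence is in the second half: the paper proves $\dot\gamma - \tau X_{H_t}(\gamma) - X_{F_t}(\gamma) = 0$ on $\Int I$ via a \emph{local} test field, namely a parallel vector field cut off by a bump function supported in $(0,1)$ and then extended to a twisted field by $D\varphi^k$, after which the relation is propagated to all of $\mathbb{R}$ using the twist conditions. You instead take the \emph{global} test field $X := JY$ and kill $Y$ all at once via positive-definiteness of $g_J$, after first verifying (correctly) that $Y$ itself is a twisted field. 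Your route is slicker and avoids the ``propagate to $t+k$'' step entirely.

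However, as written there is a genuine gap: the Proposition assumes only that $\varphi$ is an exact symplectomorphism, with no finite-order hypothesis, but your construction of the global test field relies on a $\varphi$-invariant $d\lambda$-compatible almost complex structure, and the averaging argument of \cite[Lemma~5.5.6]{mcduffsalamon:st:2017} (and of Lemma~\ref{lem:twisted_variation}, which you invoke) requires $\ord\varphi < \infty$. For a general exact symplectomorphism (e.g.\ a hyperbolic one) no such invariant compatible $J$ exists, so $JY$ need not be twisted and the argument breaks. The paper's bump-function argument sidesteps this because it only tests on $\Int I$ and never needs to conjugate $J$ by $D\varphi$. Your approach can be repaired without finite order by replacing the fixed $\varphi$-invariant $J$ with a smooth \emph{time-dependent} family $(J_t)_{t\in\mathbb{R}}$ of $d\lambda$-compatible almost complex structures satisfying the twist relation $\varphi^*J_{t+1} = J_t$ (such a family always exists: choose it on $[0,1]$ and extend by the relation); then $t\mapsto J_tY(t)$ is again twisted and the same positivity argument applies. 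As stated, though, your proof silently imports a hypothesis that the Proposition does not make.
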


\begin{proof}
	A routine computation shows \eqref{eq:differential_perturbed_twisted_Rabinowitz_functional}. Let $(\gamma,\tau) \in \Crit \mathscr{A}^{(H,F)}_\varphi$. It follows immediately from \eqref{eq:differential_perturbed_twisted_Rabinowitz_functional} that
	\begin{equation*}
		\int_0^1 H_t(\gamma(t))dt = 0
	\end{equation*}
	\noindent and
	\begin{equation*}
		\int_0^1 d\lambda(X(t),\dot{\gamma}(t) - \tau X_{H_t}(\gamma(t)) -  X_{F_t}(\gamma(t)))dt = 0
	\end{equation*}
	\noindent for all $X \in T_\gamma \mathscr{L}_\varphi M$. Suppose there exists $t_0 \in \Int I$ such that
	\begin{equation*}
		\dot{\gamma}(t_0) - \tau X_{H_{t_0}}(\gamma(t_0)) -  X_{F_{t_0}}(\gamma(t_0)) \neq 0.
	\end{equation*}
	By nondegeneracy of the symplectic form $d\lambda$ there exists $v \in T_{\gamma(t_0)}M$ with
	\begin{equation*}
		d\lambda(v,\dot{\gamma}(t_0) - \tau X_{H_{t_0}}(\gamma(t_0)) -  X_{F_{t_0}}(\gamma(t_0))) \neq 0.
	\end{equation*}
	Fix a Riemannian metric on $M$ and let $X_v$ denote the unique parallel vector field along $\gamma \vert_I$ such that $X_v(t_0) = v$. As $\Int I$ is open, there exists $\delta > 0$ such that $\bar{B}_\delta(t_0) \subseteq \Int I$. Fix a smooth bump function $\beta \in C^\infty(I)$ for $t_0$ supported in $B_\delta(t_0)$. By shrinking $\delta$ if necessary, we may assume that
	\begin{equation*}
		\int_{t_0 - \delta}^{t_0 + \delta} d\lambda(\beta(t)X_v(t),\dot{\gamma}(t) - \tau X_{H_t}(\gamma(t)) -  X_{F_t}(\gamma(t)))dt \neq 0.
	\end{equation*}
	Extending
	\begin{equation*}
		(\beta X_v)(t + k) := D\varphi^k(\beta(t)X_v(t)) \qquad \forall t \in I, k \in \mathbb{Z},
	\end{equation*}
	\noindent we have that $\beta X_v \in T_\gamma \mathscr{L}_\varphi M$ and thus we compute
	\begin{align*}
		0 &= d\mathcal{\mathscr{A}}^{(H,F)}_\varphi \vert_{(\gamma,\tau)}(\beta X_v,0)\\
		&= \int_{t_0 - \delta}^{t_0 + \delta} d\lambda(\beta(t)X_v(t),\dot{\gamma}(t) - \tau X_{H_t}(\gamma(t)) -  X_{F_t}(\gamma(t)))dt\\
		&\neq 0.
	\end{align*}
	Hence
	\begin{equation*}
		\dot{\gamma}(t) = \tau X_{H_t}(\gamma(t)) + X_{F_t}(\gamma(t)) \qquad \forall t \in I,
	\end{equation*}
	\noindent implying
	\begin{align*}
		\dot{\gamma}(t + k) &= D\varphi^k(\dot{\gamma}(t))\\
		&= \tau (D\varphi^k \circ X_{H_t})(\gamma(t)) + (D\varphi^k \circ X_{F_t})(\gamma(t))\\
		&= \tau (D\varphi^k \circ X_{H_t} \circ \varphi^{-k} \circ \varphi^k)(\gamma(t)) + (D\varphi^k \circ X_{F_t} \circ \varphi^{-k} \circ \varphi^k)(\gamma(t))\\
		&= \tau \varphi^k_* X_{H_t}(\gamma(t + k)) + \varphi^k_* X_{F_t}(\gamma(t + k))\\
		&= \tau X_{\varphi^k_*H_t}(\gamma(t + k)) + X_{\varphi^k_*F_t}(\gamma(t + k))\\
		&= \tau X_{H_{t + k}}(\gamma(t + k)) + X_{F_{t + k}}(\gamma(t + k))
	\end{align*}
	\noindent for all $t \in I$ and $k \in \mathbb{Z}$. The other direction is immediate.
\end{proof}

\begin{corollary}
	The differential of the perturbed twisted Rabinowitz action functional is well-defined, that is, independent of the choice of twisted variation, and linear.
\end{corollary}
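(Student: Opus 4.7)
The plan is to deduce both claims directly from the explicit formula \eqref{eq:differential_perturbed_twisted_Rabinowitz_functional} established in Proposition \ref{prop:differential_perturbed_twisted_Rabinowitz_functional}. The actual analytic content is already encapsulated there; what remains is only a matter of reading the formula.

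First, for well-definedness, I would observe that the right-hand side of \eqref{eq:differential_perturbed_twisted_Rabinowitz_functional},
\begin{equation*}
	\int_0^1 d\lambda\bigl(X(t),\dot{\gamma}(t) - \tau X_{H_t}(\gamma(t)) -  X_{F_t}(\gamma(t))\bigr)\,dt - \eta\int_0^1 H_t(\gamma(t))\,dt,
\end{equation*}
depends only on the fixed data $(\gamma,\tau,\lambda,\varphi,H,F)$ and on the pair $(X,\eta) \in T_\gamma\mathscr{L}_\varphi M \times \mathbb{R}$; it does not involve the variation $\gamma_\varepsilon$ at all. Since Proposition \ref{prop:differential_perturbed_twisted_Rabinowitz_functional} asserts that for every twisted variation $\gamma_\varepsilon$ with $\delta\gamma = X$ the $\varepsilon$-derivative $\frac{d}{d\varepsilon}\big|_{\varepsilon=0}\mathscr{A}^{(H,F)}_\varphi(\gamma_\varepsilon,\tau+\varepsilon\eta)$ equals this expression, independence of the choice of twisted variation is immediate. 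Existence of at least one such variation for an arbitrary $X \in T_\gamma\mathscr{L}_\varphi M$ is guaranteed by Lemma \ref{lem:twisted_variation}, so the differential is in fact defined on the full space $T_\gamma\mathscr{L}_\varphi M \times \mathbb{R}$.

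Second, for linearity of $(X,\eta) \mapsto d\mathscr{A}^{(H,F)}_\varphi|_{(\gamma,\tau)}(X,\eta)$, I would read off the formula: the first summand is linear in $X$ because $d\lambda$ is a two-form and integration is linear, while the second summand is linear in $\eta$; the two summands are additively uncoupled. Hence the map is linear in $(X,\eta)$. The main obstacle would have been verifying the formula itself, but that is handled in Proposition \ref{prop:differential_perturbed_twisted_Rabinowitz_functional}, so no further work is required.
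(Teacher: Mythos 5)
Your argument is exactly the intended one: the corollary is an immediate consequence of Proposition \ref{prop:differential_perturbed_twisted_Rabinowitz_functional}, since the right-hand side of \eqref{eq:differential_perturbed_twisted_Rabinowitz_functional} manifestly depends only on $(X,\eta)$ and not on the chosen twisted variation, and is visibly linear in $(X,\eta)$. Your additional remark invoking Lemma \ref{lem:twisted_variation} to guarantee that a twisted variation realising any prescribed $X \in T_\gamma\mathscr{L}_\varphi M$ actually exists is a correct and worthwhile point that completes the well-definedness claim.
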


Preservation of energy of an autonomous Hamiltonian system yields the following corollary.

\begin{corollary}
	Let $\varphi \in \Diff(M)$ be an exact symplectomorphism of an exact symplectic manifold $(M,\lambda)$ and $H \in C^\infty_\varphi(M)$. Then $\Crit\mathscr{A}^H_\varphi$ consists precisely of all $(\gamma,\tau) \in \mathscr{L}_\varphi M \times \mathbb{R}$ such that $\gamma(\mathbb{R}) \subseteq H^{-1}(0)$ and $\gamma$ is an integral curve of $\tau X_H$.
	\label{cor:critical_points_perturbed_twisted_Rabinowitz_functional}
\end{corollary}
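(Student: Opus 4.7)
The plan is to specialise Proposition~\ref{prop:differential_perturbed_twisted_Rabinowitz_functional} to the case $F=0$ and $H\in C^\infty_\varphi(M)$ autonomous, and then upgrade the integral constraint $\int_0^1 H(\gamma(t))\,dt=0$ to the pointwise constraint $H\circ\gamma\equiv 0$ via conservation of energy.

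First I would observe that under the given assumptions the system \eqref{eq:critical_points} reduces to
\begin{equation*}
\dot{\gamma}(t) = \tau X_H(\gamma(t)) \qquad\text{and}\qquad \int_0^1 H(\gamma(t))\,dt = 0,
\end{equation*}
since $X_{F_t}\equiv 0$ and $X_{H_t}=X_H$ is time-independent. The first equation is exactly the statement that $\gamma$ is an integral curve of the Hamiltonian vector field $\tau X_H$. It remains to argue that the integral condition is, in the autonomous case, equivalent to $\gamma(\mathbb{R})\subseteq H^{-1}(0)$.

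The key step is conservation of energy. Differentiating $H\circ\gamma$ along a critical point yields
\begin{equation*}
\frac{d}{dt}(H\circ\gamma)(t) = dH_{\gamma(t)}(\dot{\gamma}(t)) = \tau\,dH_{\gamma(t)}(X_H(\gamma(t))) = \tau\,d\lambda(X_H,X_H)\big|_{\gamma(t)} = 0,
\end{equation*}
so $H\circ\gamma$ is a constant function on $\mathbb{R}$. Combined with $\int_0^1 H(\gamma(t))\,dt=0$, this constant must vanish, giving $\gamma(\mathbb{R})\subseteq H^{-1}(0)$. Conversely, if $\gamma$ is an integral curve of $\tau X_H$ with image in $H^{-1}(0)$, then both conditions of \eqref{eq:critical_points} hold trivially, so $(\gamma,\tau)\in\Crit\mathscr{A}^H_\varphi$ by the other direction of Proposition~\ref{prop:differential_perturbed_twisted_Rabinowitz_functional}.

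This is a short argument with no real obstacle; the only mild subtlety is noting that the twisted boundary condition $\gamma(t+1)=\varphi(\gamma(t))$ is already built into $\mathscr{L}_\varphi M$, so no additional verification is needed to confirm that the integral curve extends consistently to all of $\mathbb{R}$. The main content is simply invoking the previous proposition and using that autonomous Hamiltonian flows preserve the Hamiltonian.
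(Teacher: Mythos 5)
Your argument is correct and coincides with the paper's, which leaves the corollary unproved beyond the remark that ``preservation of energy of an autonomous Hamiltonian system yields the following corollary.'' You have simply spelled out that one-line remark: specialise Proposition~\ref{prop:differential_perturbed_twisted_Rabinowitz_functional} to $F=0$ and autonomous $H$, then use $\frac{d}{dt}(H\circ\gamma)=\tau\,d\lambda(X_H,X_H)=0$ to promote the integral constraint to the pointwise one.
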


There is a natural $\mathbb{R}$-action on the twisted loop space $\mathscr{L}_\varphi M$ given by
\begin{equation*}
	(s \cdot \gamma)(t) := \gamma(t + s) \qquad \forall t \in \mathbb{R}.
\end{equation*}
If $(M,\lambda)$ is an exact symplectic manifold and $H \in C^\infty_\varphi(M)$ for an exact symplectomorphism $\varphi \in \Diff(M)$ of finite order such that $\supp f \cap H^{-1}(0) = \emptyset$, then the twisted Rabinowitz action functional $\mathscr{A}^H_\varphi$ is invariant under the induced $\mathbb{S}^1$-action on $\Crit \mathscr{A}^H_\varphi$. In particular, the unperturbed twisted Rabinowitz action functional is never a Morse function.

\begin{definition}[Hessian of the Twisted Rabinowitz Action Functional]
	Let $\varphi \in \Diff(M)$ be an exact symplectomorphism of an exact symplectic manifold $(M,\lambda)$ and $H \in C^\infty_\varphi(M)$. For $(\gamma,\tau) \in \Crit \mathscr{A}^H_\varphi$, define the \bld{Hessian of the twisted Rabinowitz action functional} 
	\begin{equation*}
		\Hess \mathscr{A}^H_\varphi\vert_{(\gamma,\tau)} \colon (T_\gamma\mathscr{L}_\varphi M \times \mathbb{R}) \times (T_\gamma \mathscr{L}_\varphi M \times \mathbb{R}) \to \mathbb{R}
	\end{equation*}
	\noindent by
	\begin{equation*}
		\Hess \mathscr{A}^H_\varphi\vert_{(\gamma,\tau)}((X,\eta),(Y,\sigma)) := \frac{\partial^2}{\partial\varepsilon_1\partial\varepsilon_2}\bigg\vert_{\varepsilon_1 = \varepsilon_2 = 0}\mathscr{A}^H_\varphi(\gamma_{\varepsilon_1,\varepsilon_2},\tau + \varepsilon_1\eta + \varepsilon_2\sigma),
	\end{equation*}
	\noindent for a smooth two-parameter family $\gamma_{\varepsilon_1,\varepsilon_2}$ of twisted loops with
	\begin{equation*}
		\frac{\partial}{\partial \varepsilon_1}\bigg\vert_{\varepsilon_1 = 0}\gamma_{\varepsilon_1,0} = X \qquad \text{and} \qquad \frac{\partial}{\partial \varepsilon_2}\bigg\vert_{\varepsilon_2 = 0}\gamma_{0,\varepsilon_2} = Y.
	\end{equation*}
\end{definition}

\begin{definition}[Symplectic Connection]
	Let $(M,\omega)$ be a symplectic manifold. A \bld{symplectic connection on $(M,\omega)$} is defined to be a torsion-free connection $\nabla$ in the tangent bundle $TM$ such that $\nabla \omega = 0$.
\end{definition}

\begin{remark}
	Every symplectic manifold admits a symplectic connection by \cite[p.~308]{gutt:symplectic:2006}, but in sharp contrast to the Riemannian case, a symplectic connection on a given symplectic manifold is in general not unique.
\end{remark}

\begin{lemma}
	Let $\varphi \in \Diff(M)$ be an exact symplectomorphism of an exact symplectic manifold $(M,\lambda)$. Fix a symplectic connection $\nabla$ on $(M,d\lambda)$ and a twisted Hamiltonian function $H \in C^\infty_\varphi(M)$. If $(\gamma,\tau) \in \Crit \mathscr{A}^H_\varphi$, then
	\begin{multline}
		\Hess \mathscr{A}^H_\varphi\vert_{(\gamma,\tau)}((X,\eta),(Y,\sigma)) = \int_0^1 d\lambda(Y,\nabla_t X)\\ - \tau\int_0^1 \Hess^\nabla H(X,Y) - \eta\int_0^1 dH(Y) - \sigma \int_0^1 dH(X)
		\label{eq:formula_twisted_Hessian}
	\end{multline}
	\noindent for all $(X,\eta),(Y,\sigma) \in T_\gamma \mathscr{L}_\varphi M \times \mathbb{R}$.
	\label{lem:twisted_Hessian}
\end{lemma}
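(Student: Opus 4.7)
The strategy is a direct second-variation calculation, resting on Proposition \ref{prop:differential_perturbed_twisted_Rabinowitz_functional} together with the two defining properties of a symplectic connection: torsion-freeness and $\nabla d\lambda = 0$. I would begin by fixing a smooth two-parameter family $\gamma_{\varepsilon_1,\varepsilon_2}$ of twisted loops with the prescribed initial derivatives $X, Y$ and writing $X_{\varepsilon_1,\varepsilon_2} := \partial_{\varepsilon_1}\gamma_{\varepsilon_1,\varepsilon_2}$. At each parameter value this lies in the appropriate tangent space of the free twisted loop space, so Proposition \ref{prop:differential_perturbed_twisted_Rabinowitz_functional} (specialised to $F = 0$ and autonomous $H$) expresses $\partial_{\varepsilon_1}\mathscr{A}^H_\varphi(\gamma_{\varepsilon_1,\varepsilon_2}, \tau + \varepsilon_1\eta + \varepsilon_2\sigma)$ as the integral of $d\lambda(X_{\varepsilon_1,\varepsilon_2}, \dot\gamma_{\varepsilon_1,\varepsilon_2} - (\tau + \varepsilon_1\eta + \varepsilon_2\sigma)X_H(\gamma_{\varepsilon_1,\varepsilon_2}))$ minus $\eta\int_0^1 H(\gamma_{\varepsilon_1,\varepsilon_2})dt$.

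Next I would differentiate once more in $\varepsilon_2$ and evaluate at $\varepsilon_1 = \varepsilon_2 = 0$. Propagating $\partial_{\varepsilon_2}$ through $d\lambda$ via $\nabla d\lambda = 0$ produces two terms; the one containing the factor $\dot\gamma - \tau X_H(\gamma)$ vanishes identically at the critical point by Corollary \ref{cor:critical_points_perturbed_twisted_Rabinowitz_functional}. For the surviving piece, torsion-freeness gives $\nabla_{\partial_{\varepsilon_2}}\dot\gamma|_0 = \nabla_t Y$, the chain rule yields $\nabla_{\partial_{\varepsilon_2}}(X_H \circ \gamma)|_0 = \nabla_Y X_H$ and $\partial_{\varepsilon_2}H(\gamma)|_0 = dH(Y)$, and the key algebraic identity is
\begin{equation*}
	d\lambda(X, \nabla_Y X_H) = \Hess^\nabla H(X, Y).
\end{equation*}
I would prove this by applying $\nabla_Y$ to the tensorial equation $d\lambda(\cdot, X_H) = dH$ (the sign $\iota_{X_H}d\lambda = -dH$ is pinned down by Proposition \ref{prop:differential_perturbed_twisted_Rabinowitz_functional}) and invoking $\nabla d\lambda = 0$; symmetry in $X, Y$ then comes from torsion-freeness of $\nabla$. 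Assembling all terms, and using $d\lambda(X, X_H) = dH(X)$ for the $\sigma$-contribution, gives
\begin{equation*}
	\Hess \mathscr{A}^H_\varphi|_{(\gamma,\tau)}((X,\eta),(Y,\sigma)) = \int_0^1 d\lambda(X, \nabla_t Y)dt - \tau\int_0^1 \Hess^\nabla H(X, Y)dt - \eta\int_0^1 dH(Y)dt - \sigma\int_0^1 dH(X)dt.
\end{equation*}

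To recast the first term in the form $d\lambda(Y, \nabla_t X)$ stated in the lemma, I would integrate by parts: from $\nabla d\lambda = 0$ one has $\partial_t d\lambda(X, Y) = d\lambda(\nabla_t X, Y) + d\lambda(X, \nabla_t Y)$, and the boundary contribution $[d\lambda(X, Y)]_0^1$ vanishes because $X(1) = D\varphi(X(0))$, $Y(1) = D\varphi(Y(0))$, and $\varphi^*(d\lambda) = d\lambda$ (an exact symplectomorphism is in particular symplectic). Hence $\int_0^1 d\lambda(X, \nabla_t Y)dt = \int_0^1 d\lambda(Y, \nabla_t X)dt$, finishing the derivation. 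The main obstacle is really only careful bookkeeping: tracking covariant derivatives of vector fields along a two-parameter map and keeping the sign convention for $X_H$ straight. The one genuinely non-trivial algebraic step is the identity $d\lambda(X, \nabla_Y X_H) = \Hess^\nabla H(X, Y)$; once it is in place, the rest assembles routinely.
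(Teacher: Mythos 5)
Your computation is correct and supplies precisely the ``long routine computation'' that the paper declines to spell out. The one subtle point you identify and handle correctly is that the boundary term $[d\lambda(X,Y)]_0^1$ from the final integration by parts vanishes because of the twisted boundary conditions $X(1)=D\varphi(X(0))$, $Y(1)=D\varphi(Y(0))$ together with $\varphi^*d\lambda=d\lambda$; without that observation the first term could not be rewritten as $\int_0^1 d\lambda(Y,\nabla_t X)$.
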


\begin{proof}
	The proof is a long routine computation.
\end{proof}

\begin{corollary}
	The Hessian of the twisted Rabinowitz action functional is a well-defined, that is, independent of the choice of twisted two-parameter family, symmetric bilinear form.
\end{corollary}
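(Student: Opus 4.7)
The plan is to read off all three claims — well-definedness, bilinearity, and symmetry — directly from the closed-form expression \eqref{eq:formula_twisted_Hessian} provided by Lemma \ref{lem:twisted_Hessian}. The right-hand side of \eqref{eq:formula_twisted_Hessian} depends only on the critical point $(\gamma,\tau)$, the connection $\nabla$, the function $H$, and the tangent vectors $(X,\eta)$ and $(Y,\sigma)$; in particular it no longer involves the auxiliary two-parameter family $\gamma_{\varepsilon_1,\varepsilon_2}$. Since the Hessian as originally defined equals this expression for \emph{every} admissible choice of family, well-definedness is immediate. Bilinearity in $(X,\eta)$ and $(Y,\sigma)$ separately is also manifest from the formula, using that $\Hess^\nabla H$ is bilinear and that both $X \mapsto \nabla_t X$ and the integral over $I$ are linear.

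The nontrivial point is symmetry, so I would spend the main effort there, treating the four summands one by one. The term $-\eta\int_0^1 dH(Y) - \sigma \int_0^1 dH(X)$ is visibly symmetric under the simultaneous swap of $(X,\eta)$ with $(Y,\sigma)$. For the term $-\tau\int_0^1 \Hess^\nabla H(X,Y)$, symmetry in $X,Y$ follows from the standard fact that the covariant Hessian of a function is symmetric whenever $\nabla$ is torsion-free, which is part of the definition of a symplectic connection. These two steps are routine.

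The only genuine obstacle is the kinetic term $\int_0^1 d\lambda(Y,\nabla_t X)\,dt$. Here I would use that $\nabla$ is symplectic, so $\nabla d\lambda = 0$, whence
\begin{equation*}
\frac{d}{dt} d\lambda(X,Y) = d\lambda(\nabla_t X, Y) + d\lambda(X, \nabla_t Y).
\end{equation*}
Integrating from $0$ to $1$ and using antisymmetry of $d\lambda$ gives
\begin{equation*}
\int_0^1 d\lambda(Y,\nabla_t X)\,dt = \int_0^1 d\lambda(X,\nabla_t Y)\,dt - \bigl[d\lambda(X,Y)\bigr]_0^1.
\end{equation*}
The boundary contribution has to vanish, and this is precisely where the twisted structure enters: by definition of $T_\gamma \mathscr{L}_\varphi M$ one has $X(1) = D\varphi(X(0))$ and $Y(1) = D\varphi(Y(0))$, while $\varphi^*\lambda - \lambda = df$ implies $\varphi^* d\lambda = d\lambda$, so
\begin{equation*}
d\lambda_{\gamma(1)}(X(1),Y(1)) = (\varphi^* d\lambda)_{\gamma(0)}(X(0),Y(0)) = d\lambda_{\gamma(0)}(X(0),Y(0)).
\end{equation*}
Hence the boundary term cancels and $\int_0^1 d\lambda(Y,\nabla_t X)\,dt = \int_0^1 d\lambda(X,\nabla_t Y)\,dt$, giving symmetry of the kinetic term. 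Combining the four summands yields $\Hess \mathscr{A}^H_\varphi\vert_{(\gamma,\tau)}((X,\eta),(Y,\sigma)) = \Hess \mathscr{A}^H_\varphi\vert_{(\gamma,\tau)}((Y,\sigma),(X,\eta))$, completing the proof.
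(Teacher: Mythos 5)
Your proof is correct and follows exactly the intended route: the paper states the corollary immediately after Lemma \ref{lem:twisted_Hessian} with no separate argument, and everything is meant to be read off from formula \eqref{eq:formula_twisted_Hessian}. You correctly identify the one genuinely nontrivial point, namely symmetry of the kinetic term $\int_0^1 d\lambda(Y,\nabla_t X)$, and handle it properly by differentiating $d\lambda(X,Y)$ with $\nabla d\lambda = 0$, integrating, and killing the boundary contribution $\bigl[d\lambda(X,Y)\bigr]_0^1$ using the twist conditions $X(1) = D\varphi(X(0))$, $Y(1) = D\varphi(Y(0))$, $\gamma(1)=\varphi(\gamma(0))$, and $\varphi^*d\lambda = d\lambda$ (which follows from exactness of $\varphi^*\lambda - \lambda$). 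The appeal to torsion-freeness for symmetry of $\Hess^\nabla H$ and to the structure of \eqref{eq:formula_twisted_Hessian} for well-definedness and bilinearity are both standard and correct.
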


In fact, the Hessian of the twisted Rabinowitz action functional is also independent of the choice of symplectic connection.

\begin{lemma}
	Let $\varphi \in \Diff(M)$ be an exact symplectomorphism of an exact symplectic manifold $(M,\lambda)$ and $H \in C^\infty_\varphi(M)$. If $(\gamma,\tau) \in \Crit \mathscr{A}^H_\varphi$, then
	\begin{multline}
		\label{eq:formula_twisted_Hessian_Lie}
		\Hess \mathscr{A}^H_\varphi\vert_{(\gamma,\tau)}((X,\eta),(Y,\sigma)) = \int_0^1 d\lambda(Y,L_{\tau X_H}X - \eta X_H(\gamma))\\ - \sigma\int_0^1 dH(X)
	\end{multline}
	\noindent for all $(X,\eta),(Y,\sigma) \in T_\gamma \mathscr{L}_\varphi M \times \mathbb{R}$, where
	\begin{equation*}
		L_{\tau X_H}X(t) = \frac{d}{ds}\bigg\vert_{s = 0} D\phi^H_{-s\tau}X(s + t) \qquad \forall t \in I,
	\end{equation*}
	\noindent with $\phi^H$ denoting the smooth flow of the Hamiltonian vector field $X_H$.
	\label{lem:twisted_Hessian}
\end{lemma}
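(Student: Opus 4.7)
The plan is to deduce \eqref{eq:formula_twisted_Hessian_Lie} from the connection-based formula \eqref{eq:formula_twisted_Hessian} of the preceding lemma by a purely pointwise manipulation along $\gamma$. The term $-\sigma \int_0^1 dH(X)$ already appears in both formulas, so only three integrals need to be rewritten. For the $\eta$-contribution, I would first read off from the differential \eqref{eq:differential_perturbed_twisted_Rabinowitz_functional} and the critical-point equation \eqref{eq:critical_points} that the Hamiltonian convention in force is $\iota_{X_H} d\lambda = -dH$, hence $dH(Y) = d\lambda(Y, X_H(\gamma))$. This lets me rewrite
\begin{equation*}
-\eta \int_0^1 dH(Y)\,dt = -\eta \int_0^1 d\lambda(Y, X_H(\gamma))\,dt,
\end{equation*}
which is precisely the $-\eta X_H(\gamma)$ piece of \eqref{eq:formula_twisted_Hessian_Lie}.

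The heart of the argument is the pointwise identity
\begin{equation*}
L_{\tau X_H} X(t) = \nabla_t X(t) - \tau \nabla_{X(t)} X_H
\end{equation*}
along $\gamma$. To prove it, I would extend $X$ to an arbitrary smooth vector field $\tilde X$ defined on a neighbourhood of $\gamma(t)$. Since $\gamma$ is an integral curve of $\tau X_H$ by Corollary \ref{cor:critical_points_perturbed_twisted_Rabinowitz_functional}, the flow satisfies $\phi^H_{-s\tau}(\gamma(s+t)) = \gamma(t)$, and the defining expression $\frac{d}{ds}\big\vert_{s=0} D\phi^H_{-s\tau} X(s + t)$ then coincides with the standard flow-line formula for $[\tau X_H, \tilde X](\gamma(t))$. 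Torsion-freeness of any symplectic connection $\nabla$ gives $[\tau X_H, \tilde X] = \nabla_{\tau X_H}\tilde X - \tau \nabla_{\tilde X} X_H$, and evaluating at $\gamma(t)$ yields the identity because $\nabla_{\dot\gamma}\tilde X = \nabla_t X$ pointwise, using $\dot\gamma = \tau X_H(\gamma)$. Next, differentiating $\iota_{X_H} d\lambda = -dH$ covariantly and using $\nabla d\lambda = 0$ produces
\begin{equation*}
\Hess^\nabla H(X,Y) = (\nabla_X dH)(Y) = d\lambda(Y, \nabla_X X_H),
\end{equation*}
so combining the two pieces I obtain
\begin{equation*}
d\lambda(Y, \nabla_t X) - \tau \Hess^\nabla H(X,Y) = d\lambda(Y, L_{\tau X_H} X).
\end{equation*}
Integrating and assembling the three rewritings with the $\sigma$-term carried over directly delivers \eqref{eq:formula_twisted_Hessian_Lie}.

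The main obstacle is the justification that the flow-theoretic expression defining $L_{\tau X_H} X$ coincides with the bracket $[\tau X_H, \tilde X]\vert_\gamma$ even though $X$ is given only along $\gamma$: one must verify that the resulting value is independent of the smooth extension $\tilde X$. This is a standard check, and the twisted boundary condition $X(t+1) = D\varphi(X(t))$ plays no role in this pointwise step. Once this independence is in hand, the remainder is routine linear algebra against $d\lambda$, and in particular shows \emph{a posteriori} that the right-hand side of \eqref{eq:formula_twisted_Hessian_Lie} is manifestly independent of the symplectic connection used in \eqref{eq:formula_twisted_Hessian}.
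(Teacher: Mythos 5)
Your proposal is correct and follows essentially the same route as the paper: compute $\Hess^\nabla H(X,Y)=d\lambda(Y,\nabla_X X_H)$ from $\nabla\,d\lambda=0$ and $\iota_{X_H}d\lambda=-dH$, use torsion-freeness and $\dot\gamma=\tau X_H(\gamma)$ to turn $\nabla_tX-\tau\nabla_X X_H$ into the bracket $[\tau X_H,X]$, and identify the latter with the flow expression for $L_{\tau X_H}X$. The only difference is that you make explicit two steps the paper leaves tacit, namely the rewriting $-\eta\int_0^1 dH(Y)=-\eta\int_0^1 d\lambda(Y,X_H(\gamma))$ that converts the $\eta$-term to the form appearing in \eqref{eq:formula_twisted_Hessian_Lie}, and the (automatic, since the flow of $\tau X_H$ through $\gamma(t)$ stays on $\gamma$) extension-independence of the bracket formula when $X$ is given only along $\gamma$.
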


\begin{proof}
	One computes
	\begin{equation*}
		\Hess^\nabla(X,Y) = d\lambda(Y,\nabla_X X_H).
	\end{equation*}
	Inserting this into \eqref{eq:formula_twisted_Hessian} yields
	\begin{multline*}
		\Hess \mathscr{A}^H_\varphi\vert_{(\gamma,\tau)}((X,\eta),(Y,\sigma)) = \int_0^1 d\lambda(Y,\nabla_t X - \tau\nabla_X X_H)\\ - \eta \int_0^1 dH(Y) - \sigma \int_0^1 dH(X).
	\end{multline*}
	But as $\nabla$ has no torsion by assumption, we compute
	\begin{equation*}
		\nabla_t X - \tau \nabla_X X_H = \nabla_{\dot{\gamma}}X - \tau \nabla_X X_H = \nabla_{\tau X_H}X - \tau\nabla_X X_H = [\tau X_H,X],
	\end{equation*}
	\noindent and 
	\allowdisplaybreaks
	\begin{align*}
		[\tau X_H,X](t) &= L_{\tau X_H}X(t)\\
		&= \frac{d}{ds}\bigg\vert_{s = 0} D\phi_{-s\tau}^H(X(\phi_{s\tau}^H(\gamma(t)))\\
		&= \frac{d}{ds}\bigg\vert_{s = 0} D\phi_{-s\tau}^H(X(\phi_{s\tau}^H(\phi_{t\tau}^H(\gamma(0)))))\\
		&= \frac{d}{ds}\bigg\vert_{s = 0} D\phi_{-s\tau}^H(X(\phi_{(s + t)\tau}^H(\gamma(0))))\\
		&= \frac{d}{ds}\bigg\vert_{s = 0} D\phi_{-s\tau}^H X(s + t)
	\end{align*}
	\noindent for all $t \in I$.
\end{proof}

\begin{corollary}
	\label{cor:kernel_Hessian}
	Let $\varphi \in \Diff(M)$ be an exact symplectomorphism of an exact symplectic manifold $(M,\lambda)$ and $H \in C^\infty_\varphi(M)$. The kernel of the Hessian of the twisted Rabinowitz action functional at $(\gamma,\tau) \in \Crit \mathscr{A}^H_\varphi$ consists precisely of all $(X,\eta) \in T_\gamma \mathscr{L}_\varphi M \times \mathbb{R}$ satisfying
	\begin{equation*}
		L_{\tau X_H} X = \eta X_H(\gamma) \qquad \text{and} \qquad \int_0^1 dH(X) = 0.
	\end{equation*}
\end{corollary}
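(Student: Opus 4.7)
The plan is to read off both conditions from the Lie-derivative formula for the Hessian given in Lemma \ref{lem:twisted_Hessian}, namely
\begin{equation*}
    \Hess \mathscr{A}^H_\varphi\vert_{(\gamma,\tau)}((X,\eta),(Y,\sigma)) = \int_0^1 d\lambda(Y, L_{\tau X_H}X - \eta X_H(\gamma))\,dt - \sigma \int_0^1 dH(X)\,dt.
\end{equation*}
The reverse implication is immediate: if $L_{\tau X_H}X = \eta X_H(\gamma)$ and $\int_0^1 dH(X) dt = 0$, both terms on the right-hand side vanish for every choice of test pair $(Y,\sigma)$.

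For the forward implication, assume $(X,\eta) \in \ker \Hess \mathscr{A}^H_\varphi\vert_{(\gamma,\tau)}$. First I would specialise $Y = 0$ and let $\sigma \in \mathbb{R}$ be arbitrary; this forces $\int_0^1 dH(X)\,dt = 0$. Plugging this back in, the remaining condition is
\begin{equation*}
    \int_0^1 d\lambda(Y, L_{\tau X_H}X - \eta X_H(\gamma))\,dt = 0 \qquad \forall Y \in T_\gamma \mathscr{L}_\varphi M.
\end{equation*}

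To conclude pointwise vanishing of $L_{\tau X_H}X - \eta X_H(\gamma)$ I would run exactly the bump-function argument already used in the proof of Proposition \ref{prop:differential_perturbed_twisted_Rabinowitz_functional}. If the expression were nonzero at some $t_0 \in \Int I$, nondegeneracy of $d\lambda$ would supply $v \in T_{\gamma(t_0)}M$ with nonzero pairing; parallel transporting $v$ along $\gamma\vert_I$ with respect to an auxiliary Riemannian metric, multiplying by a bump function $\beta$ supported in a small interval around $t_0$, and then extending via
\begin{equation*}
    (\beta X_v)(t+k) := D\varphi^k(\beta(t)X_v(t)), \qquad t \in I,\ k \in \mathbb{Z},
\end{equation*}
produces an admissible element of $T_\gamma \mathscr{L}_\varphi M$ against which the Hessian does not vanish, a contradiction. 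Hence $L_{\tau X_H}X = \eta X_H(\gamma)$ on $I$, and since $X$, $\gamma$ and $X_H$ are all equivariant under $\varphi$ (recall $\varphi^* H = H$ because $H \in C^\infty_\varphi(M)$ is autonomous, so $X_H$ is $\varphi$-related to itself), the identity propagates to all of $\mathbb{R}$.

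I do not expect any serious obstacle: the only nontrivial ingredient is the bump-function construction, which is essentially copied verbatim from Proposition \ref{prop:differential_perturbed_twisted_Rabinowitz_functional}. The proof is therefore just a clean bookkeeping exercise from the Hessian formula of Lemma \ref{lem:twisted_Hessian}.
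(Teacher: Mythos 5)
Your argument is correct and is exactly the one the paper leaves implicit: the corollary is stated without proof, immediately after Lemma \ref{lem:twisted_Hessian} (the Lie-derivative formula \eqref{eq:formula_twisted_Hessian_Lie}), precisely because the two conditions can be read off by testing first with $Y=0$, $\sigma$ arbitrary, and then with $\sigma=0$, $Y$ a bump-localised parallel vector field — the same construction already spelled out in Proposition \ref{prop:differential_perturbed_twisted_Rabinowitz_functional}. Your final remark on propagating the identity from $\Int I$ to all of $\mathbb{R}$ via the $\varphi$-equivariance of $X$, $\gamma$, and $X_H$ (noting $\varphi_*X_H = X_H$ so that $\phi^H$ and $\varphi$ commute) is the one detail worth recording, and you have it right.
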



\begin{lemma}
	Let $\varphi \in \Diff(M)$ be an exact symplectomorphism of an exact symplectic manifold $(M,\lambda)$ and $H \in C^\infty_\varphi(M)$. For every $(\gamma,\tau) \in \Crit \mathscr{A}^H_\varphi$, there is a canonical isomorphism
	\begin{equation}
		\ker \Hess \mathscr{A}^H_\varphi\vert_{(\gamma,\tau)} \cong \mathfrak{K}(\gamma,\tau),
		\label{eq:canonical_isomorphism}
	\end{equation}
	\noindent where
	\begin{equation*}
		\mathfrak{K}(\gamma,\tau) := \cbr[0]{(v_0,\eta) \in T_{\gamma(0)}M \times \mathbb{R} : \textup{solution of \eqref{eq:initial_value_system}}}
	\end{equation*}
	\noindent with
	\begin{equation}
		\label{eq:initial_value_system}
		D(\phi_{-\tau}^{X_H} \circ \varphi)v_0 = v_0 + \eta X_H(\gamma(0)) \qquad \text{and} \qquad dH(v_0) = 0.
	\end{equation}
	\label{lem:kernel_of_the_Hessian}
\end{lemma}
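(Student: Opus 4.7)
The plan is to construct the isomorphism explicitly via evaluation at zero, $\Phi(X,\eta) := (X(0),\eta)$, and to show that under this map the two conditions characterising $\ker \Hess \mathscr{A}^H_\varphi|_{(\gamma,\tau)}$ in Corollary \ref{cor:kernel_Hessian} translate precisely into the two conditions defining $\mathfrak{K}(\gamma,\tau)$ in \eqref{eq:initial_value_system}.

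The central device is to conjugate $X$ by the Hamiltonian flow, setting $\tilde{X}(t) := D\phi^H_{-t\tau}(X(t)) \in T_{\gamma(0)}M$. A direct chain-rule computation, combined with the invariance $(\phi^H_s)_* X_H = X_H$, shows that the first condition of Corollary \ref{cor:kernel_Hessian}, namely $L_{\tau X_H} X = \eta X_H(\gamma)$, is equivalent to $\dot{\tilde{X}}(t) = \eta X_H(\gamma(0))$, whose integration yields the explicit formula
\[
  X(t) = D\phi^H_{t\tau}\bigl(v_0 + t\eta X_H(\gamma(0))\bigr), \qquad v_0 := X(0).
\]
This exhibits $X$ as uniquely determined by $v_0$, so $\Phi$ is injective; it remains to match the twisted boundary condition and the integral constraint.

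For the boundary condition, I will first observe that $\varphi \circ \phi^H_s = \phi^H_s \circ \varphi$ for every $s \in \mathbb{R}$, which follows from $\varphi$ being symplectic together with $\varphi^*H = H$ (since $H \in C^\infty_\varphi(M)$ is autonomous). Consequently $t \mapsto D\varphi^{-1}(X(t+1))$ satisfies the same ODE as $X$, so by uniqueness the full twisted condition $X(t+1) = D\varphi(X(t))$ reduces to the single equation $X(1) = D\varphi(X(0))$, which, using the explicit formula, becomes exactly $D(\phi^H_{-\tau}\circ\varphi)(v_0) = v_0 + \eta X_H(\gamma(0))$. The integral condition simplifies similarly: from $H\circ\phi^H_{t\tau} = H$ and $dH(X_H) = 0$ one obtains $dH_{\gamma(t)}(X(t)) = dH_{\gamma(0)}(v_0)$ for all $t$, so $\int_0^1 dH(X)\,dt = 0$ collapses to $dH(v_0) = 0$. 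The main subtlety is the propagation step for the boundary condition: one must carefully verify the commutativity $\varphi \circ \phi^H_s = \phi^H_s \circ \varphi$, which is the one place where the hypothesis $H \in C^\infty_\varphi(M)$ is used nontrivially. Everything else is a mechanical consequence of linearity and the conjugation trick, and $\Phi$ is seen to be a linear bijection onto $\mathfrak{K}(\gamma,\tau)$ with inverse supplied by the explicit formula above.
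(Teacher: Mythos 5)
Your proof is correct and follows essentially the same route as the paper: the map is evaluation at zero composed with the flow conjugation $v(t) = D\phi^H_{-\tau t}X(t)$, the ODE $\dot v = \eta X_H(\gamma(0))$ is integrated explicitly, and the inverse is given by $X(t) = D\phi^H_{\tau t}(v_0 + t\eta X_H(\gamma(0)))$. The one place you go beyond the paper is in explicitly verifying, via the commutativity $\varphi\circ\phi^H_s = \phi^H_s\circ\varphi$, that the full twisted periodicity $X(t+1) = D\varphi(X(t))$ reduces to the single boundary equation at $t=0$; the paper simply asserts that the inverse formula lands in $T_\gamma\mathscr{L}_\varphi M$, so this is a welcome small clarification rather than a divergence.
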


\begin{proof}
	We follow \cite[p.~99--100]{frauenfelderkoert:3bp:2018}. Let $(X,\eta) \in \ker \Hess \mathscr{A}^H_\varphi\vert_{(\gamma,\tau)}$ and define
	\begin{equation*}
		v \colon I \to T_{\gamma(0)}M, \qquad v(t) := D\phi_{-\tau t}^H X(t).
	\end{equation*}
	We claim that
	\begin{equation}
		\ker \Hess \mathscr{A}^H_\varphi\vert_{(\gamma,\tau)} \to \mathfrak{K}(\gamma,\tau), \qquad (X,\eta) \mapsto (v(0),\eta)
		\label{eq:canonical_isomorphism}
	\end{equation}
	\noindent is an isomorphism. First, we show that the above homomorphism is indeed well-defined. The assumption that $(X,\eta)$ lies in the kernel of the Hessian of the twisted Rabinowitz action functional at the critical point $(\gamma,\tau)$ is by Corollary \ref{cor:kernel_Hessian} equivalent to
	\begin{equation}
		\label{eq:kernel_of_the_Hessian}
		\dot{v} = \eta X_H(\gamma(0)) \qquad \text{and} \qquad \int_0^1 dH(v) = 0.
	\end{equation}
	Integrating the first equation yields
	\begin{equation*}
		v(t) = v_0 + t\eta X_H(\gamma(0)) \qquad \forall t \in I,
	\end{equation*}
	\noindent with $v_0 :=v(0)$. Thus $(v_0,\eta) \in \mathfrak{K}(\gamma,\tau)$ follows from
	\begin{align}
		\label{eq:twist_condition}
		v(1) &= D\phi_{-\tau}^H X(1)\nonumber\\
		&= D\phi_{-\tau}^H D\varphi(X(0))\nonumber\\
		&= D(\phi_{-\tau}^H \circ \varphi)X(0)\nonumber\\
		&= D(\phi_{-\tau}^H \circ \varphi)v_0.
	\end{align}
	That \eqref{eq:canonical_isomorphism} is an isomorphism follows by considering the inverse
	\begin{equation*}
		\mathfrak{K}(\gamma,\tau) \to \ker \Hess \mathscr{A}^H_\varphi\vert_{(\gamma,\tau)}, \qquad (v_0,\eta) \mapsto (X,\eta),
	\end{equation*}
	\noindent where $X \in T_\gamma \mathscr{L}_\varphi M$ is defined by
	\begin{equation*}
		X(t) := D\phi^H_{\tau t}(v_0 + t\eta X_H(\gamma(0))) \qquad \forall t \in \mathbb{R}.
	\end{equation*}
	This establishes the canonical isomorphism \eqref{eq:canonical_isomorphism}.
\end{proof}

In what follows, we assume that the energy hypersurface $H^{-1}(0)$ is a contact manifold. A contact manifold is a pair $(\Sigma,\alpha)$, where $\Sigma$ is an odd-dimensional manifold and $\alpha \in \Omega^1(\Sigma)$ is a global contact form. Every contact manifold $(\Sigma,\alpha)$ admits a unique vector field $R \in \mathfrak{X}(\Sigma)$, called the Reeb vector field, defined implicitly by
\begin{equation*}
	i_Rd\alpha = 0 \qquad \text{and} \qquad i_R\alpha = 1.
\end{equation*}
Recall, that a strict contactomorphism of a contact manifold $(\Sigma,\alpha)$ is defined to be a diffeomorphism $\varphi \in \Diff(\Sigma)$ such that $\varphi^*\alpha = \alpha$. Note that the Reeb flow always commutes with a strict contactomorphism.

\begin{definition}[Parametrised Twisted Reeb Orbit]
	For a contact manifold $(\Sigma,\alpha)$ and a strict contactomorphism $\varphi \colon (\Sigma,\alpha) \to (\Sigma,\alpha)$ define the set of \bld{parametrised twisted Reeb orbits on $(\Sigma,\alpha)$} by
	\begin{equation*}
		\mathscr{P}_\varphi(\Sigma,\alpha) := \cbr[0]{(\gamma,\tau) \in \mathscr{L}_\varphi \Sigma \times \mathbb{R} : \dot{\gamma}(t) = \tau R(\gamma(t)) \> \forall t \in \mathbb{R}}.
	\end{equation*}
\end{definition}

\begin{definition}[Twisted Spectrum]
	For a contact manifold $(\Sigma,\alpha)$ and a strict contactomorphism $\varphi \colon (\Sigma,\alpha) \to (\Sigma,\alpha)$ define the \bld{twisted spectrum} by
		\begin{equation*}
		\Spec(\Sigma,\alpha) := \cbr[0]{\tau \in \mathbb{R} : \exists \gamma \in \mathscr{L}_\varphi \Sigma \> \text{such that } (\gamma,\tau) \in \mathscr{P}_\varphi(\Sigma,\alpha)}.
	\end{equation*}
\end{definition}



\begin{proposition}[Kernel of the Hessian of the Twisted Rabinowitz Action Functional]
	Let $(\Sigma,\lambda\vert_\Sigma)$ be a regular energy surface of restricted contact type in an exact Hamiltonian system $(M,\lambda,H)$ with $X_H\vert_\Sigma = R$. Suppose $\varphi \in \Diff(M)$ is an exact symplectomorphism such that $H \in C^\infty_\varphi(M)$ and $\varphi^*\lambda\vert_\Sigma = \lambda\vert_\Sigma$. Then
	\begin{equation*}
		\Crit \mathscr{A}^H_\varphi = \mathscr{P}_\varphi(\Sigma,\lambda\vert_\Sigma)
	\end{equation*}
	\noindent and
	\begin{equation*}
		\ker \Hess \mathscr{A}^H_\varphi\vert_{(\gamma,\tau)} \cong \ker \del[1]{D(\phi^R_{-\tau} \circ \varphi)\vert_{\gamma(0)} - \id_{T_{\gamma(0)}\Sigma}}
	\end{equation*}
	\noindent for all $ (\gamma,\tau) \in \mathscr{P}_\varphi(\Sigma,\lambda\vert_\Sigma)$. Moreover, we have $R(\gamma(0)) \in \ker \Hess \mathscr{A}^H_\varphi\vert_{(\gamma,\tau)}$ and if $\mathscr{P}_\varphi(\Sigma,\lambda\vert_\Sigma) \subseteq \Sigma \times \mathbb{R}$ is an embedded submanifold, then $\Spec(\Sigma,\lambda\vert_\Sigma)$ is discrete.
	\label{prop:kernel_hessian_contact}
\end{proposition}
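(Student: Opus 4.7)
The plan is to verify each assertion of the proposition by combining Corollary~\ref{cor:critical_points_perturbed_twisted_Rabinowitz_functional}, Lemma~\ref{lem:kernel_of_the_Hessian}, and the single geometric input that $(\phi^R_{-\tau}\circ\varphi)^{*}\alpha = \alpha$ on $\Sigma$, where I write $\alpha := \lambda|_{\Sigma}$. Since $H$ is autonomous and $H \in C^{\infty}_{\varphi}(M)$ one has $\varphi^{*}H = H$, so $\varphi(\Sigma) = \Sigma$; together with $X_{H}|_{\Sigma} = R$ this gives $\phi^{H}_{s}|_{\Sigma} = \phi^{R}_{s}$, and Corollary~\ref{cor:critical_points_perturbed_twisted_Rabinowitz_functional} then identifies $\Crit \mathscr{A}^{H}_{\varphi}$ with $\mathscr{P}_{\varphi}(\Sigma,\alpha)$.

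For the Hessian kernel I would apply Lemma~\ref{lem:kernel_of_the_Hessian} to obtain $\ker \Hess \mathscr{A}^{H}_{\varphi}|_{(\gamma,\tau)} \cong \mathfrak{K}(\gamma,\tau)$, set $A := D(\phi^{R}_{-\tau}\circ\varphi)|_{\gamma(0)}$, and then reduce $\mathfrak{K}(\gamma,\tau)$ to $\ker(A - \id_{T_{\gamma(0)}\Sigma})$. The condition $dH(v_{0}) = 0$ is equivalent to $v_{0} \in T_{\gamma(0)}\Sigma$, and $\phi^{H}_{-\tau}|_{\Sigma} = \phi^{R}_{-\tau}$ converts the twist condition~\eqref{eq:initial_value_system} into $(A - \id)v_{0} = \eta R(\gamma(0))$. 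The decisive step is to pair both sides with $\alpha_{\gamma(0)}$: from $\varphi^{*}\alpha = \alpha$ and $(\phi^{R}_{-\tau})^{*}\alpha = \alpha$ one gets $A^{*}\alpha_{\gamma(0)} = \alpha_{\gamma(0)}$ (well-defined since $A$ is an endomorphism of $T_{\gamma(0)}\Sigma$, using $\phi^{R}_{-\tau}(\varphi(\gamma(0))) = \gamma(0)$ on a twisted Reeb orbit), so the left-hand side vanishes and $\eta = \eta\,\alpha(R(\gamma(0))) = 0$. The map $(v_{0},0) \leftrightarrow v_{0}$ then realises the asserted isomorphism. The inclusion $R(\gamma(0)) \in \ker \Hess \mathscr{A}^{H}_{\varphi}|_{(\gamma,\tau)}$ is then automatic: strict contactomorphism invariance makes the Reeb flow commute with $\varphi$, whence $A R(\gamma(0)) = R(\phi^{R}_{-\tau}(\varphi(\gamma(0)))) = R(\gamma(0))$.

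For the discreteness of the spectrum, I would identify $\mathscr{P}_{\varphi}(\Sigma,\alpha)$ with $\{(p,\tau) \in \Sigma \times \mathbb{R} : \phi^{R}_{\tau}(p) = \varphi(p)\}$ and linearise the defining equation: a tangent vector $(v,s) \in T_{p}\Sigma \times \mathbb{R}$ to $\mathscr{P}_{\varphi}$ satisfies $D\phi^{R}_{\tau}v + s R(\varphi(p)) = D\varphi\, v$. Applying $\alpha_{\varphi(p)}$ and using $(\phi^{R}_{\tau})^{*}\alpha = \varphi^{*}\alpha = \alpha$ eliminates both derivative terms and forces $s = 0$, so on the embedded submanifold $\mathscr{P}_{\varphi}$ the projection onto the second factor has vanishing differential and is locally constant; consequently $\Spec(\Sigma,\alpha)$ is a discrete subset of $\mathbb{R}$. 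I expect the principal obstacle to be precisely the $\eta = 0$ step in the Hessian calculation: this is the place where the restricted-contact-type hypothesis $\varphi^{*}\alpha = \alpha$ collapses the two equations cut out by Lemma~\ref{lem:kernel_of_the_Hessian} into the purely geometric object $\ker(A - \id_{T_{\gamma(0)}\Sigma})$, and the remaining items (critical set identification, Reeb vector in the kernel, discreteness) all follow by essentially the same $\alpha$-pairing trick.
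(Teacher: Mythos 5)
Your argument is correct and reaches all the conclusions, but it departs from the paper's proof in two places in a way worth recording. For the $\eta=0$ step, the paper decomposes $v_0 = v_0^\xi + aR(\gamma(0))$ along the splitting $T\Sigma = \xi \oplus \langle R\rangle$ and uses that $D(\phi^R_{-\tau}\circ\varphi)$ preserves this splitting, equating the $R$-components; you instead pair the twist equation $(A-\id)v_0 = \eta R(\gamma(0))$ directly with $\alpha_{\gamma(0)}$ and invoke $A^*\alpha = \alpha$. These are essentially dual formulations of the same fact ($A$ strict-contact implies $A$ preserves $\xi$ and fixes $\alpha$), and your version is slightly shorter since it avoids introducing the auxiliary scalar $a$. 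For the discreteness of the spectrum the two arguments genuinely diverge: the paper uses the period--action equality $\mathscr{A}^H_\varphi(\gamma,\tau)=\tau$ and the fact that the action functional has vanishing differential at critical points, so $\partial_s\tau_s = d\mathscr{A}^H_\varphi|_{(\gamma_s,\tau_s)}(\partial_s\gamma_s,\partial_s\tau_s) = 0$ along any path in the critical manifold; you instead linearise the defining equation $\phi^R_\tau(p)=\varphi(p)$ of $\mathscr{P}_\varphi$ inside $\Sigma\times\mathbb{R}$ and pair once more with $\alpha$, using $(\phi^R_\tau)^*\alpha=\varphi^*\alpha=\alpha$ to kill the derivative terms and force $s=0$. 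The paper's route is more conceptual (it recycles the already-established period--action identity and works purely at the level of the action functional), while yours is more elementary and keeps the entire proposition running on a single mechanism, namely $\alpha$-invariance under $\phi^R_{-\tau}\circ\varphi$. Both need, and implicitly use, the countability of components of the embedded submanifold $\mathscr{P}_\varphi$ to pass from local constancy of $\tau$ to discreteness of $\Spec$; you should flag that step as explicitly as the paper does.
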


\begin{remark}
	\label{rem:period-action_equality}
	If $(\gamma,\tau) \in \mathscr{P}_\varphi(\Sigma,\lambda\vert_\Sigma)$, we have the period-action equality
	\begin{equation*}
		\mathscr{A}^H_\varphi(\gamma,\tau) = \int_0^1 \gamma^* \lambda = \int_0^1 \lambda(\dot{\gamma}) = \tau \int_0^1 \lambda(R(\gamma)) = \tau.
	\end{equation*}	
\end{remark}

\begin{proof}
	The identity $\Crit \mathscr{A}^H_\varphi = \mathscr{P}_\varphi(\Sigma,\lambda\vert_\Sigma)$ immediately follows from Corollary \ref{cor:critical_points_perturbed_twisted_Rabinowitz_functional} together with \cite[Corollary~5.30]{lee:dt:2012}. The proof of the formula for the kernel of the Hessian of $\mathscr{A}^H_\varphi$ is inspired by \cite[p.~102]{frauenfelderkoert:3bp:2018}. By Lemma \ref{lem:kernel_of_the_Hessian} we have that
	\begin{equation*}
		\ker \Hess \mathscr{A}^H_\varphi\vert_{(\gamma,\tau)} \cong \mathfrak{K}(\gamma,\tau),
	\end{equation*}
	\noindent where $(v_0,\eta) \in T_{\gamma(0)} M \times \mathbb{R}$ belongs to $\mathfrak{K}(\gamma,\tau)$ if and only if
	\begin{equation*}
		D(\phi^{X_H}_{-\tau} \circ \varphi)v_0 = v_0 + \eta X_H(\gamma(0)) \qquad \text{and} \qquad dH(v_0) = 0.
	\end{equation*}
	Thus in our setting, the second condition implies $v_0 \in T_{\gamma(0)}\Sigma$. Decompose
	\begin{equation*}
		v_0 = v^\xi_0 + a R(\gamma(0)) \qquad v^\xi_0 \in \xi_{\gamma(0)}, a \in \mathbb{R},
	\end{equation*}
	\noindent where $\xi := \ker \lambda\vert_\Sigma$ denotes the contact distribution. Then we compute
	\begin{align*}
		D(\phi^R_{-\tau} \circ \varphi)R(\gamma(0)) &= D(\phi^R_{-\tau} \circ \varphi)\del[3]{\frac{d}{dt}\bigg\vert_{t = 0} \phi^R_t(\gamma(0))}\\
		&= \frac{d}{dt}\bigg\vert_{t = 0} (\phi^R_{-\tau} \circ \varphi \circ \phi^R_t)(\gamma(0))\\
		&= \frac{d}{dt}\bigg\vert_{t = 0} (\phi^R_t \circ \varphi \circ \phi^R_{-\tau})(\gamma(0))\\
		&= \frac{d}{dt}\bigg\vert_{t = 0} \phi^R_t(\gamma(0))\\
		&= R(\gamma(0)),
	\end{align*}
	\noindent as a strict contactomorphism commutes with the Reeb flow. Hence
	\begin{equation*}
		v_0 + \eta R(\gamma(0)) = D(\phi^R_{-\tau} \circ \varphi)v_0 = D^\xi(\phi^R_{-\tau} \circ \varphi)v_0^\xi + aR(\gamma(0)),
	\end{equation*}
	\noindent where 
	\begin{equation*}
		D^\xi(\phi^R_{-\tau} \circ \varphi) := D(\phi^R_{-\tau} \circ \varphi)\vert_\xi \colon \xi \to \xi,
	\end{equation*}
	\noindent implies 
	\begin{equation*}
		\eta = 0 \qquad \text{and} \qquad D^\xi(\phi^R_{-\tau} \circ \varphi)v_0^\xi = v_0^\xi
	\end{equation*}
	\noindent by considering the splitting $T\Sigma = \xi \oplus \langle R \rangle$. Consequently
	\begin{equation*}
		\mathfrak{K}(\gamma,\tau) = \ker \del[1]{D(\phi^R_{-\tau} \circ \varphi)\vert_{\gamma(0)} - \id_{T_{\gamma(0)}\Sigma}} \times \cbr[0]{0}.
	\end{equation*}

	Finally, assume that $\mathscr{P}_\varphi(\Sigma,\lambda\vert_\Sigma) \subseteq \Sigma \times \mathbb{R}$ is an embedded submanifold via the obvious identification of $(\gamma,\tau) \in \mathscr{P}_\varphi(\Sigma,\lambda\vert_\Sigma)$ with $(\gamma(0),\tau) \in \Sigma \times \mathbb{R}$. Fix a path $(\gamma_s,\tau_s)$ in $\mathscr{P}_\varphi(\Sigma,\lambda\vert_\Sigma) = \Crit \mathscr{A}^H_\varphi$ from $(\gamma_0,\tau_0)$ to $(\gamma_1,\tau_1)$. Then using Remark \ref{rem:period-action_equality} we compute
	\begin{equation*}
		\partial_s \tau_s = \partial_s \mathscr{A}^H_\varphi(\gamma_s,\tau_s) = d\mathscr{A}^H_\varphi\vert_{(\gamma_s,\tau_s)}(\partial_s\gamma_s,\partial_s \tau_s) = 0,
	\end{equation*}
	\noindent implying that $\tau_s$ is constant, and in particular $\tau_0 = \tau_1$. Consequently, $\mathscr{A}^H_\varphi$ is constant on each path-connected component of $\mathscr{P}_\varphi(\Sigma,\lambda\vert_\Sigma)$. As $\mathscr{P}_\varphi(\Sigma,\lambda\vert_\Sigma)$ is a submanifold of $\Sigma \times \mathbb{R}$, there are only countably many connected components by definition, implying that $\Spec(\Sigma,\lambda\vert_\Sigma)$ is discrete.
\end{proof}

\section{Compactness of the Moduli Space of Twisted Negative Gradient Flow Lines}
\label{sec:compactness}

\begin{definition}[Liouville Domain]
	A \bld{Liouville domain} is defined to be a compact connected exact symplectic manifold $(W,\lambda)$ with connected boundary such that the Liouville vector field $X$ defined implicitly by $i_Xd\lambda = \lambda$ is outward-pointing along the boundary. 
\end{definition}

\begin{definition}[Liouville Automorphism]
	Let $(W, \lambda)$ be a Liouville domain with boundary $\Sigma$. A diffeomorphism $\varphi \in \Diff(W)$ is said to be a \bld{Liouville automorphism}, iff $\varphi(\Sigma) = \Sigma$, $\varphi^*\lambda - \lambda$ is exact and compactly supported in $\Int W$, and $\ord \varphi < \infty$ near the boundary. The set of all Liouville automorphisms on the Liouville domain $(W,\lambda)$ is denoted by $\Aut(W,\lambda)$.
\end{definition}

\begin{remark}
	Let $\varphi \in \Aut(W,\lambda)$ be a Liouville automorphism. Then there exists a unique function $f_\varphi \in C^\infty_c(\Int W)$ such that
		\begin{equation*}
			\varphi^* \lambda - \lambda = df_\varphi.
		\end{equation*}
\end{remark}

\begin{remark}
	The set $\Aut(W,\lambda)$ of Liouville automorphisms of a Liouville domain $(W,\lambda)$ is in general not a group. Indeed, for $\varphi,\psi \in \Aut(W,\lambda)$ it is not necessarily true that $\varphi \circ \psi$ is of finite order unless $\varphi$ and $\psi$ commute.
\end{remark}

\begin{definition}[Completion of a Liouville Domain]
	Let $(W,\lambda)$ be a Liouville domain with boundary $\Sigma$. The \bld{completion of $(W,\lambda)$} is defined to be the exact symplectic manifold $(M,\lambda)$, where 
\begin{equation*}
	M := W \cup_\Sigma \intco[0]{0,+\infty} \times \Sigma \qquad \text{and} \qquad \lambda\vert_{\intco[0]{0,+\infty} \times \Sigma} := e^r\lambda\vert_\Sigma.
	\end{equation*}
\end{definition}

\begin{definition}[Twisted Defining Hamiltonian Function]
	Let $(W,\lambda)$ be a Liouville domain with boundary $\Sigma$ and $\varphi \in \Aut(W,\lambda)$. A \bld{twisted defining Hamiltonian function for $\Sigma$} is a Hamiltonian function $H \in C^\infty(M)$ on the completion $(M,\lambda)$ of $(W,\lambda)$, satisfying the following conditions:
	\begin{enumerate}[label=\textup{(\roman*)}]
		\item $H^{-1}(0) = \Sigma$ and $\Sigma \cap \Crit H = \varnothing$.
		\item $H \in C^\infty_\varphi(M)$. 
		\item $dH$ is compactly supported.
		\item $X_H\vert_\Sigma = R$ is the Reeb vector field of the contact form $\lambda\vert_\Sigma$.
	\end{enumerate}
	Denote by $\mathscr{F}_\varphi(\Sigma)$ the set of twisted defining Hamiltonian functions for $\Sigma$.
\end{definition}

\begin{remark}
	A necessary condition for $\mathscr{F}_\varphi(\Sigma) \neq \emptyset$ is that $\varphi^*R = R$. This is not true in general if $\varphi$  does not induce a strict contactomorphism on $\Sigma$.
\end{remark}

\begin{definition}[Adapted Almost Complex Structure]
	Let $(W,\lambda)$ be a Liouville domain with boundary $\Sigma$. An \bld{adapted almost complex structure} is defined to be a $d\lambda$-compatible almost complex structure $J$ on $(W,\lambda)$ such that $J$ restricts to define a $d\lambda\vert_\Sigma$-compatible almost complex structure on the contact distribution $\ker \lambda\vert_\Sigma$ and $JR = \partial_r$ holds near the boundary.
\end{definition}

\begin{definition}[Rabinowitz--Floer Data]
	Let $(M,\lambda)$ be the completion of a Liouville domain $(W,\lambda)$ with boundary $\Sigma$ and $\varphi \in \Aut(W,\lambda)$. \bld{Rabinowitz--Floer data for $\varphi$} is defined to be a pair $(H,J)$ consisting of a twisted defining Hamiltonian function $H \in \mathscr{F}_\varphi(\Sigma)$ for $\Sigma$ and of a smooth family $J = (J_t)_{t \in \mathbb{R}}$ of adapted almost complex structures on $W$ such that 
	\begin{equation*}
		\varphi^*J_{t + 1} = J_t \qquad \forall t \in \mathbb{R}.
	\end{equation*}
\end{definition}

\begin{remark}
	For simplicity we ignore the fact, that in order to achieve transversality of the moduli spaces in general one actually needs a dependence of the smooth family of almost complex structures on the Lagrange multiplier (see \cite{abbondandolomerry:floer:2018}). But this technicality does not significantly alter our arguments as explained in \cite{frauenfelderschlenk:equivariant:2016}.
\end{remark}

\begin{lemma}
	Let $(W,\lambda)$ be a Liouville domain and $\varphi \in \Aut(W,\lambda)$. Then there exists Rabinowitz--Floer data for $\varphi$.
	\label{lem:defining_Hamiltonian}
\end{lemma}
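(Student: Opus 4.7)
The plan is to construct $H$ and the family $J = (J_t)$ separately, exploiting the fact that $\varphi$ is genuinely well-behaved only near $\partial W$. The preliminary observation is that since $f_\varphi$ is compactly supported in $\Int W$, the equality $\varphi^*\lambda = \lambda$ holds on some neighborhood of $\Sigma = \partial W$, so $\varphi$ commutes with the Liouville flow there. Therefore, in the collar coordinates $(r, x) \in (-\varepsilon, 0] \times \Sigma$ induced by the Liouville flow, $\varphi$ takes the form $\varphi(r, x) = (r, \varphi\vert_\Sigma(x))$; extending $\varphi$ by the same formula to the cylindrical end $[0, \infty) \times \Sigma$ yields a strict exact symplectomorphism of finite order $m := \ord(\varphi\vert_\Sigma)$ on the entire neighborhood $\{r > -\varepsilon\}$ of $\Sigma$ in $M$, which in particular commutes with the Reeb flow so that $\varphi^*R = R$ is automatic.

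First I would construct $H$: choose a smooth $h \colon \mathbb{R} \to \mathbb{R}$ with $h(0) = 0$, $h'(0) = 1$, $0$ a regular value of $h$, and $h$ constant on $(-\infty, -\varepsilon/2]$ and on $[r_0, \infty)$ for some $r_0 > 0$. Define $H(r,x) := h(r)$ on $\{r > -\varepsilon\}$ and extend $H$ by the constant value $h(-\varepsilon/2)$ on the rest of $W$. A routine computation on the symplectization part gives $X_H\vert_\Sigma = R$, and the remaining items of the definition of $\mathscr{F}_\varphi(\Sigma)$ are immediate from the construction. Since $H$ depends only on $r$ near $\Sigma$ and is constant elsewhere, $\varphi^*H = H$ in view of the collar description, so $H \in \mathscr{F}_\varphi(\Sigma)$.

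Next I would produce a base $J_0$ and then perturb it into a family. Applying \cite[Lemma~5.5.6]{mcduffsalamon:st:2017} to the finite order strict contactomorphism $\varphi\vert_\Sigma$ yields a $\varphi\vert_\Sigma$-invariant $d\lambda\vert_\xi$-compatible almost complex structure $J^\xi$ on the contact distribution $\xi$; extend it to $\{r > -\varepsilon\}$ by declaring $JR := \partial_r$, which remains $\varphi$-invariant by the collar description. Glue this extension to an arbitrary $d\lambda$-compatible almost complex structure on the complementary compact piece of $W$ via a cutoff argument, using the fiberwise contractibility of the space of compatible almost complex structures, to produce an adapted $J_0$ on all of $M$ that is $\varphi$-invariant on a neighborhood of $\Sigma$. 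Then interpolate smoothly on $[0,1]$ between $J_0$ (for $t$ near $0$) and $\varphi_* J_0$ (for $t$ near $1$) through $d\lambda$-compatible almost complex structures, keeping $J_t \equiv J_0$ on the neighborhood of $\Sigma$ where $\varphi_* J_0 = J_0$, so that adaptedness is preserved for every $t$. Extend to all $t \in \mathbb{R}$ by $J_{t+k} := (\varphi^k)_* J_t$ for $k \in \mathbb{Z}$; constancy near integer times makes the extension smooth, and the twist $\varphi^* J_{t+1} = J_t$ follows from $\varphi^*\varphi_* = \id$.

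The main obstacle is the absence of a finite-order structure in the interior of $W$: averaging cannot produce a globally $\varphi$-invariant adapted almost complex structure, and this is precisely why the data must be allowed to depend on $t$. The role of the time-dependence in the last step is to bypass this obstruction, trading strict invariance for the weaker twist condition via a path in the contractible fiberwise space of $d\lambda$-compatible almost complex structures.
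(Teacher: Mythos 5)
Your proof is correct and follows the same overall strategy as the paper: exploit the fact that $\varphi$ commutes with the Liouville flow near $\Sigma$ (since $f_\varphi$ is supported away from the boundary) to get collar coordinates in which $\varphi$ is a product, build $H = h(r)$ from a mollified piecewise-linear profile, and then construct the twisted family $(J_t)$ by interpolation through the contractible space of $d\lambda$-compatible almost complex structures.

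The only real divergence is in how the family $(J_t)$ is set up. The paper picks an arbitrary $d\lambda|_\Sigma$-compatible $J$ on $\xi$, chooses a path from $J$ to $\varphi_*J$ in $\mathcal{J}(\xi,d\lambda|_\Sigma)$, propagates this by the twist relation, extends via the SFT-type formula, and glues (through the associated Riemannian metrics) with a separately chosen twisted family on $W\setminus\Sigma$. You instead first invoke the averaging lemma \cite[Lemma~5.5.6]{mcduffsalamon:st:2017} for the finite-order $\varphi|_\Sigma$ to get a $\varphi$-invariant $J^\xi$, build a single adapted $J_0$ that is $\varphi$-invariant near $\Sigma$, and then do one global interpolation $J_0 \rightsquigarrow \varphi_*J_0$ constant near the boundary. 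Your version produces a family that is $t$-independent near $\Sigma$, which is a mild convenience but not required; the paper's version avoids the averaging lemma. Both are valid, and both ultimately rely on the same two facts: exactness of $\lambda - \varphi^*\lambda$ near $\Sigma$ so that the collar is $\varphi$-equivariant, and convexity of the space of compatible metrics so that the twist condition $\varphi^*J_{t+1}=J_t$ can be interpolated.

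Two small points you might tighten: (i) for $H^{-1}(0)=\Sigma$ you should explicitly arrange $h$ monotone (or at least $h\ne 0$ off $r=0$ and $h(-\varepsilon/2)\ne 0$), not just $h(0)=0$, $h'(0)=1$; and (ii) when you say the cutoff is "via fiberwise contractibility" you should note, as the paper does, that the actual gluing is performed on the associated Riemannian metrics $m_J = d\lambda(J\cdot,\cdot)$ and then converted back by the canonical retraction, since compatible almost complex structures themselves do not form a convex set.
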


\begin{proof}
	The construction of the twisted defining Hamiltonian $H$ for $\Sigma$ is inspired by the proof of \cite[Proposition~4.1]{cieliebakfrauenfelderpaternain:mane:2010}. Fix $\delta > 0$ such that the exact symplectic embedding
	\begin{equation*}
		\psi \colon \del[1]{\intoc[0]{-\delta,0} \times \Sigma,e^r\lambda\vert_\Sigma} \hookrightarrow (W,\lambda)
	\end{equation*}
	\noindent defined by
	\begin{equation*}
		\psi(r,x) := \phi^X_r(x)
	\end{equation*}
	\noindent satisfies
	\begin{equation}
		\label{eq:assumption_delta}
		U_\delta := \psi(\intoc[0]{-\delta,0} \times \Sigma) \cap \supp f_\varphi = \emptyset.
	\end{equation}
	Indeed, that $\psi$ is an exact symplectic embedding follows from the computation 
	\begin{align*}
		\frac{d}{dr} \psi^*_r \lambda &= \frac{d}{dr}\del[1]{\phi_r^X}^* \lambda\\
		&= (\phi^X_r)^*L_X\lambda\\
		&= \del[1]{\phi_r^X}^*(di_X\lambda + i_Xd\lambda)\\
		&= \del[1]{\phi_r^X}^*(di_Xi_Xd\lambda + \lambda)\\
		&= \del[1]{\phi_r^X}^* \lambda\\
		&= \psi^*_r \lambda
	\end{align*}
	\noindent implying
	\begin{equation*}
		\psi^*_r\lambda = e^r\lambda\vert_\Sigma \qquad \forall r \in \intoc[0]{-\delta,0},
	\end{equation*}
	\noindent by $\psi_0 = \iota_\Sigma$, where $\iota_\Sigma \colon \Sigma \hookrightarrow W$ denotes the inclusion. Note that $\psi^*X = \partial_r$. We claim  
	\begin{equation}
		\label{eq:phi-invariance}
		\varphi(\psi(r,x)) = \psi(r,\varphi(x)) \qquad \forall (r,x) \in \intoc[0]{-\delta,0} 	\times \Sigma,
	\end{equation}
	\noindent that is, $\varphi$ and $\psi$ commute. Note that \eqref{eq:phi-invariance} makes sense because $\varphi(\Sigma) = \Sigma$ by assumption. Indeed, \eqref{eq:phi-invariance} follows from the uniqueness of integral curves and the computation
	\begin{align*}
		\frac{d}{dr}\varphi(\psi(r,x)) &= \frac{d}{dr}\varphi(\phi_r^X(x))\\
		&= D\varphi(X(\phi_r^X(x)))\\
		&= (D\varphi \circ X\vert_{U_\delta} \circ \varphi^{-1} \circ \varphi)(\phi_r^X(x))\\
		&= (\varphi_*X\vert_{\varphi(U_\delta)} \circ \varphi)(\phi_r^X(x))\\
		&= (X\vert_{\varphi(U_\delta)} \circ \varphi)(\phi_r^X(x))\\
		&= X(\varphi\del[1]{\psi(r,x)})
	\end{align*}
	\noindent where we used the $\varphi$-invariance of the Liouville vector field on $U_\delta$, that is,
	\begin{equation*}
		 \varphi_*X\vert_{\varphi(U_\delta)} = X\vert_{\varphi(U_\delta)},
	\end{equation*}
	\noindent which in turn follows from
	\allowdisplaybreaks
	\begin{align*}
		i_{\varphi_*X}d\lambda &= d\lambda(\varphi_*X,\cdot)\\
		&= d\lambda(D\varphi \circ X \circ \varphi^{-1},\cdot)\\
		&= d\lambda\del[1]{D\varphi \circ X \circ \varphi^{-1}, D\varphi \circ D\varphi^{-1}\cdot}\\
		&= \varphi^*d\lambda(X \circ \varphi^{-1}, D\varphi^{-1} \cdot)\\
		&= d\varphi^*\lambda(X \circ \varphi^{-1}, D\varphi^{-1} \cdot)\\
		&= d\lambda(X \circ \varphi^{-1}, D\varphi^{-1} \cdot)\\
		&= \varphi_*(i_Xd\lambda)\\
		&= \varphi_*\lambda\\
		&= \lambda - d(f_\varphi \circ \varphi^{-1})
	\end{align*}
	\noindent and assumption \eqref{eq:assumption_delta}.

\begin{figure}[h!tb]
	\centering
	\includegraphics[width=.56\textwidth]{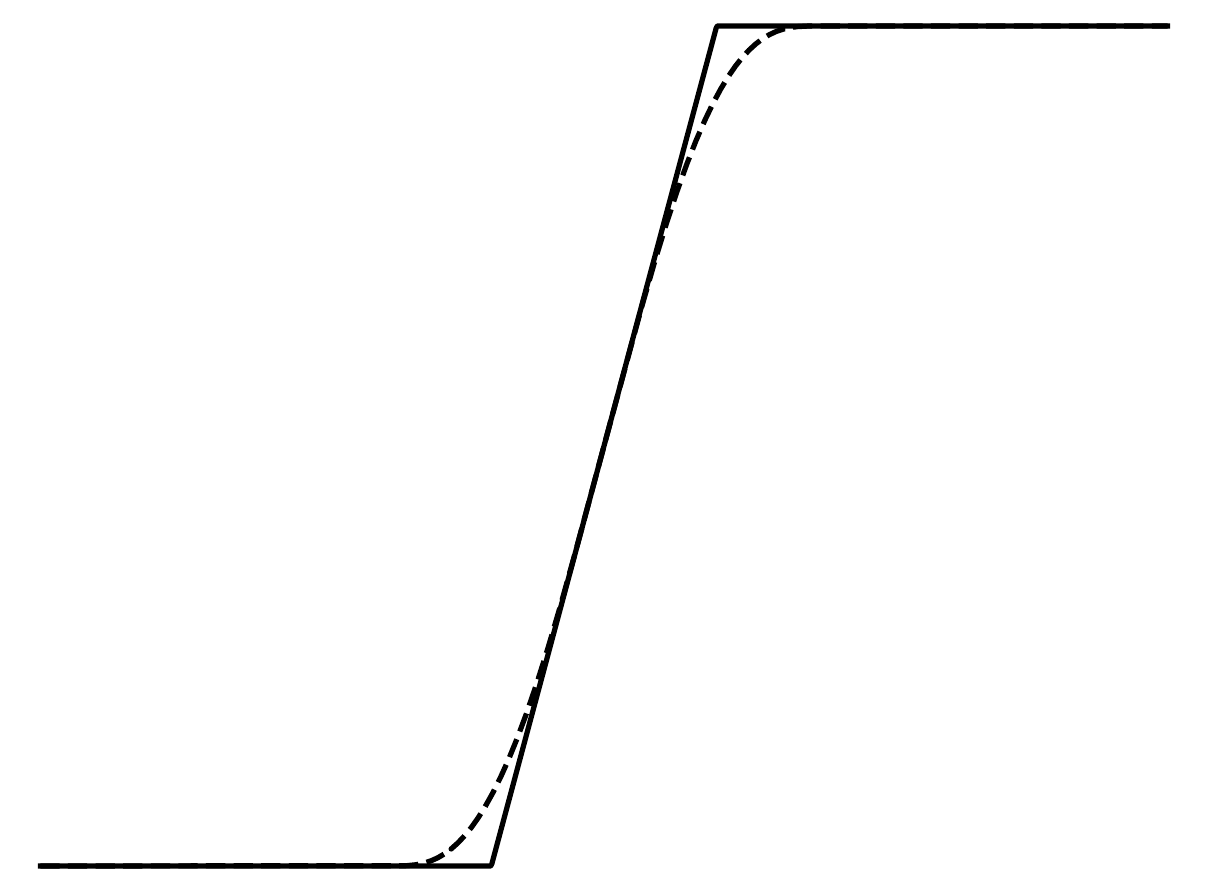}
	\caption{Mollification of the piecewise linear function $h$.}
	\label{fig:h}
\end{figure}

	Next we construct the defining Hamiltonian $H \in C^\infty(M)$. Let $h \in C^\infty(\mathbb{R})$ be a sufficiently small mollification of the piecewise linear function
	\begin{equation*}
		h(r) := \begin{cases}
			r & r \in \intcc[1]{-\frac{\delta}{2},\frac{\delta}{2}},\\
			\frac{\delta}{2} & r \in \intco[1]{\frac{\delta}{2},+\infty},\\
			-\frac{\delta}{2} & r \in \intoc[1]{-\infty,-\frac{\delta}{2}},
		\end{cases}
	\end{equation*}
	\noindent as in Figure \ref{fig:h}. 

	Define $H \in C^\infty(M)$ by
	\begin{equation}
		\label{eq:modified_Hamiltonian}
		H(p) := \begin{cases}
			h(r) & p = \psi(r,x) \in U_\delta,\\
			h(r) & p = (r,x) \in \intco[0]{0,+\infty} \times \Sigma,\\
			-\frac{\delta}{2} & p \in W \setminus U_\delta.
		\end{cases}
	\end{equation}
	Then $H$ is a defining Hamiltonian for $\Sigma$ and $dH$ is compactly supported by construction. Moreover, $H$ is $\varphi$-invariant by \eqref{eq:phi-invariance}. Finally, $X_H\vert_\Sigma = R$ follows from the observation $X_H = h'(r)e^{-r}R$. Indeed, on $U_\delta$ we compute
	\begin{align*}
		i_{h'(t)e^{-r}R}\psi^*d\lambda &= i_{h'(r)e^{-r}R} d(e^r\lambda\vert_\Sigma)\\
		&= i_{h'(r)e^{-r}R}(e^rdr \wedge \lambda\vert_\Sigma + e^r d\lambda\vert_\Sigma)\\
		&= -h'(r)dr\\
		&= -dH.
	\end{align*}
	
	Next we construct the family $J = (J_t)_{t \in \mathbb{R}}$ of $d\lambda$-compatible almost complex structures on $W$. Fix a $d\lambda\vert_\Sigma$-compatible almost complex structure $J$ on the contact distribution $\ker \lambda\vert_\Sigma$ and choose a path $(J^\Sigma_t)_{t \in I} \subseteq \mathcal{J}(\ker \lambda\vert_\Sigma,d\lambda\vert_\Sigma)$ from $J$ to $\varphi_*J$. Extend this smooth family to $(J_t^\Sigma)_{t \in \mathbb{R}}$ satisfying $\varphi^* J_{t + 1}^\Sigma = J_t^\Sigma$ for all $t \in \mathbb{R}$. Finally, extend this family to $(\intoo[0]{-\delta,+\infty} \times \Sigma, d(e^r\lambda\vert_\Sigma))$ by
\begin{equation}
	\label{eq:SFT-like}
	J_t^\Sigma\vert_{(a,x)}(b,v) := \del[1]{\lambda_x(v),  J_t^\Sigma\vert_x(\pi(v)) - bR(x)},
\end{equation}
\noindent where
\begin{equation*}
	\pi \colon \ker \lambda\vert_\Sigma \oplus \langle R \rangle \to \ker \lambda\vert_\Sigma
\end{equation*}
\noindent denotes the projection. Choose a smooth family $\del[1]{J^{W \setminus \Sigma}_t}_{t \in \mathbb{R}}$ on $W \setminus \Sigma$ twisted by $\varphi$, and let $\cbr[1]{\beta^\Sigma,\beta^{W \setminus \Sigma}}$ be a partition of unity subordinate to $\cbr[0]{U_\delta,W \setminus \Sigma}$. Define a smooth family $(m_t)_{t \in \mathbb{R}}$ of Riemannian metrics on $W$ by
\begin{equation*}
	m_t := \beta^\Sigma m_{\psi_*J^\Sigma_t} + \beta^{W \setminus \Sigma} m_{J^{W \setminus \Sigma}_t}
\end{equation*}
\noindent and let $(J_t)_{t \in \mathbb{R}}$ be the corresponding family of $d\lambda$-compatible almost complex structures on $W$.
\end{proof}

\begin{definition}[\bld{$L^2$}-Metric]
	Let $(H,J)$ be Rabinowitz--Floer data for a Liouville automorphism $\varphi \in \Aut(W,\lambda)$. Define an \bld{$L^2$-metric} on $\mathscr{L}_\varphi M \times \mathbb{R}$
	\begin{equation}
        \label{eq:L^2-metric}
        \langle (X,\eta),(Y,\sigma)\rangle_J := \int_0^1 d\lambda(J_t X(t),Y(t))dt + \eta\sigma
	\end{equation}
	\noindent for all $(X,\eta),(Y,\sigma) \in T_\gamma\mathscr{L}_\varphi M \times \mathbb{R}$ and $\gamma \in \mathscr{L}_\varphi M$.
\end{definition}

With respect to the $L^2$-metric \eqref{eq:L^2-metric}, the gradient of the twisted Rabinowitz action functional $\grad_J\mathscr{A}^H_\varphi \in \mathfrak{X}(\mathscr{L}_\varphi M \times \mathbb{R})$ is given by
\begin{equation*}
	\grad_J \mathscr{A}^H_\varphi\vert_{(\gamma,\tau)}(t) = \begin{pmatrix}
		J_t(\dot{\gamma}(t) - \tau X_H(\gamma(t)))\\
		\displaystyle-\int_0^1 H \circ \gamma
	\end{pmatrix} \qquad \forall t \in \mathbb{R}.
\end{equation*}

\begin{lemma}[Fundamental Lemma]
	Let $(H,J)$ be Rabinowitz--Floer data for a Liouville automorphism $\varphi \in \Aut(W,\lambda)$. Then there exists a constant $C = C(\lambda,H,J) > 0$ such that
	\begin{equation*}
		\norm[1]{\grad_J \mathscr{A}^H_\varphi\vert_{(\gamma,\tau)}}_J < \frac{1}{C} \quad \Rightarrow \quad \abs[0]{\tau} \leq C(\abs[0]{\mathscr{A}^H_\varphi(\gamma,\tau)} + 1)
	\end{equation*}
	\noindent for all $(\gamma,\tau) \in \mathscr{L}_\varphi M \times \mathbb{R}$.
	\label{lem:fundamental_lemma}
\end{lemma}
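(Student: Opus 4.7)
The plan is to adapt the Cieliebak--Frauenfelder strategy for the classical fundamental lemma, the twist being that the Liouville vector field is $\varphi$-equivariant only in a neighborhood of $\Sigma$. The key idea is to pair the differential $d\mathscr{A}^H_\varphi$ with a carefully chosen tangent vector so that the resulting identity isolates $\tau$. First I would construct a test vector $Y\in T_\gamma\mathscr{L}_\varphi M$ of the form $Y(t):=\chi(\gamma(t))X_L(\gamma(t))$, where $X_L$ is the Liouville vector field (extended to $M$ via $\partial_r$ on the cylindrical end) and $\chi\in C^\infty(M,[0,1])$ is a smooth $\varphi$-invariant cutoff, depending only on the Liouville coordinate $r$ near $\Sigma$, equal to $1$ on $\supp dH$ and vanishing outside $U_\delta$. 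Such a $\chi$ exists because $\supp dH$ is concentrated in a small $r$-neighborhood of $\Sigma$ while $\supp f_\varphi\subseteq W\setminus U_\delta$ by the choice of $\delta$ in the proof of Lemma \ref{lem:defining_Hamiltonian}. The same proof shows $\varphi_*X_L=X_L$ outside $\supp f_\varphi$, so $Y$ satisfies the twisted boundary condition $Y(t+1)=D\varphi(Y(t))$.

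Using $i_{X_L}d\lambda=\lambda$ in \eqref{eq:differential_perturbed_twisted_Rabinowitz_functional}, together with $(1-\chi)X_H=0$ and Definition \ref{def:perturbed_twisted_Rabinowitz_functional}, I would then derive the algebraic identity
\begin{equation*}
\tau\int_0^1\bigl(\lambda(X_H(\gamma))-H(\gamma)\bigr)dt=\mathscr{A}^H_\varphi(\gamma,\tau)-d\mathscr{A}^H_\varphi\vert_{(\gamma,\tau)}(Y,0)+f_\varphi(\gamma(0))-\int_0^1(1-\chi(\gamma))\lambda\del[1]{\dot{\gamma}-\tau X_H(\gamma)}dt.
\end{equation*}
By Cauchy--Schwarz, $\abs[0]{d\mathscr{A}^H_\varphi\vert_{(\gamma,\tau)}(Y,0)}\leq\norm[0]{\grad_J\mathscr{A}^H_\varphi\vert_{(\gamma,\tau)}}_J\cdot\norm[0]{Y}_J$, and since $(1-\chi)$ is compactly supported in $W\setminus U_{\delta/2}$ and $\norm[0]{\dot{\gamma}-\tau X_H(\gamma)}_{L^2}\leq\norm[0]{\grad_J\mathscr{A}^H_\varphi\vert_{(\gamma,\tau)}}_J$, the last integral is dominated by $C_1\norm[0]{\grad_J\mathscr{A}^H_\varphi\vert_{(\gamma,\tau)}}_J$. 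Finally, $f_\varphi$ is globally bounded.

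The crux is bounding the coefficient $\int_0^1(\lambda(X_H(\gamma))-H(\gamma))dt$ from below by a positive constant. Near $\Sigma$ the explicit formulas $H=h(r)$ and $X_H=h'(r)e^{-r}R$ from the proof of Lemma \ref{lem:defining_Hamiltonian} give $\lambda(X_H)-H=h'(r)-h(r)=1-r$, which is bounded below by $1-\delta/2$ on $r\in\intcc[0]{-\delta/2,\delta/2}$. To confine $\gamma$ to this region I would combine two consequences of $\norm[0]{\grad_J\mathscr{A}^H_\varphi}_J<1/C$: the scalar component $-\int_0^1 H(\gamma)dt$ of the gradient has absolute value at most $\norm[0]{\grad_J\mathscr{A}^H_\varphi}_J$, so the mean of $H\circ\gamma$ is $O(1/C)$; and Hamiltonian energy conservation $dH(X_H)=0$ yields $\tfrac{d}{dt}H(\gamma)=dH(\dot{\gamma}-\tau X_H(\gamma))$, so the total variation of $H\circ\gamma$ is bounded by $\norm[0]{dH}_\infty\cdot\norm[0]{\grad_J\mathscr{A}^H_\varphi}_J=O(1/C)$. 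Together these force $\sup_t\abs[0]{H(\gamma(t))}$ to be uniformly small, placing $\gamma$ in the compact $r$-neighborhood of $\Sigma$ where the lower bound holds and where $\norm[0]{Y}_J$ is bounded independently of $\gamma$. Combining the estimates then gives $\abs[0]{\tau}\leq C''(\abs[0]{\mathscr{A}^H_\varphi(\gamma,\tau)}+1)$.

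The main obstacle is precisely this confinement step: showing that a small $L^2$-gradient norm traps $\gamma$ in a neighborhood of $\Sigma$. Unlike many analogous arguments in Floer theory, it requires the simultaneous use of both components of $\grad_J\mathscr{A}^H_\varphi$ together with the Hamiltonian energy conservation identity; separately neither component forces localization of $\gamma$.
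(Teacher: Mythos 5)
Your proposal is correct and reproduces, with full details, the Cieliebak--Frauenfelder argument that the paper simply cites as ``[Proposition~3.2]{cieliebakfrauenfelder:rfh:2009} with minor modifications, as $\norm[0]{f_\varphi}_\infty < +\infty$'': you pair $d\mathscr{A}^H_\varphi$ with a $\varphi$-equivariant cutoff of the Liouville vector field, obtain the algebraic identity isolating $\tau$, bound the bracket term away from zero via the explicit collar formulas, and handle the twist by confining $\gamma$ to the region where $\varphi_*X_L = X_L$ and absorbing the bounded $f_\varphi(\gamma(0))$ into the error. The confinement step you flag as the crux is exactly the CF Step~2 (using both the $L^2$-smallness of $\dot\gamma - \tau X_H(\gamma)$ and the smallness of $\int_0^1 H(\gamma)\,dt$), so your modifications are precisely the ones the paper alludes to.
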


\begin{proof}
	The proof \cite[Proposition~3.2]{cieliebakfrauenfelder:rfh:2009} goes through with minor modifications as $\norm[0]{f_\varphi}_\infty < +\infty$ by assumption.
\end{proof}

\begin{definition}[Twisted Negative Gradient Flow Line]
	Let $(H,J)$ be Rabinowitz--Floer data for a Liouville automorphism $\varphi \in \Aut(W,\lambda)$. A \bld{twisted negative gradient flow line} is a tuple $(u,\tau) \in C^\infty(\mathbb{R}, \mathscr{L}_\varphi M \times \mathbb{R})$ such that
	\begin{equation*}
		\partial_s(u,\tau) = -\grad_J \mathscr{A}^H_\varphi\vert_{(u(s),\tau(s))} \qquad \forall s \in \mathbb{R}.
	\end{equation*}
\end{definition}

\begin{definition}[Energy]
	Let $(H,J)$ be Rabinowitz--Floer data for a Liouville automorphism $\varphi \in \Aut(W,\lambda)$. The \bld{energy of a twisted negative gradient flow line $(u,\tau)$} is defined by
	\begin{equation*}
		E_J(u,\tau) := \int_{-\infty}^{+\infty} \norm[0]{\partial_s(u,\tau)}^2_J ds = \int_{-\infty}^{+\infty} \norm[1]{\grad_J \mathscr{A}^H_\varphi\vert_{(u(s),\tau(s))}}^2_J ds.
	\end{equation*}
\end{definition}

\begin{theorem}[Compactness]
	Let $(H,J)$ be Rabinowitz--Floer data for a Liouville automorphism $\varphi \in \Aut(W,\lambda)$. Suppose $(u_\mu,\tau_\mu)$ is a sequence of negative gradient flow lines of the twisted Rabinowitz action functional $\mathscr{A}^H_\varphi$ such that there exist $a,b \in \mathbb{R}$ with
	\begin{equation*}
		a \leq \mathscr{A}^H_\varphi\del[1]{u_\mu(s),\tau_\mu(s)} \leq b \qquad \forall \mu \in \mathbb{N}, s \in \mathbb{R}.
	\end{equation*}
	Then for every reparametrisation sequence $(s_\mu) \subseteq \mathbb{R}$ there exists a subsequence $\mu_\nu$ of $\mu$ and a negative gradient flow line $(u_\infty,\tau_\infty)$ of $\mathscr{A}^H_\varphi$ such that 
	\begin{equation*}
		\del[1]{u_{\mu_\nu}(\cdot + s_{\mu_\nu}),\tau_{\mu_\nu}(\cdot + s_{\mu_\nu})} \to (u_\infty,\tau_\infty) \qquad \text{as } \nu \to \infty
	\end{equation*}
	\noindent in the $C^\infty_{\loc}(\mathbb{R}, \mathscr{L}_\varphi M \times \mathbb{R})$-topology.
	\label{thm:compactness}
\end{theorem}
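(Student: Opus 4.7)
The plan is to adapt the standard Cieliebak--Frauenfelder compactness scheme \cite{cieliebakfrauenfelder:rfh:2009} to the twisted setting, in three layers of estimates: (i) an $L^\infty$-bound on the Lagrange multipliers $\tau_\mu$, (ii) an $L^\infty$-bound on the loops $u_\mu$ forcing them into a compact subset of $M$, and (iii) uniform $C^k_{\loc}$-bounds for all $k$, from which Arzel\`a--Ascoli delivers the subsequential limit.

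First I would bound $\tau_\mu$. Since $E_J(u_\mu,\tau_\mu) = \mathscr{A}^H_\varphi(u_\mu(-\infty),\tau_\mu(-\infty)) - \mathscr{A}^H_\varphi(u_\mu(+\infty),\tau_\mu(+\infty)) \le b - a$, the set of ``bad'' times $\{s : \|\grad_J\mathscr{A}^H_\varphi|_{(u_\mu(s),\tau_\mu(s))}\|_J \ge 1/C\}$ has length at most $C^2(b-a)$. On the complement, Lemma \ref{lem:fundamental_lemma} controls $|\tau_\mu(s)|$ by $C(|\mathscr{A}^H_\varphi|+1) \le C(\max(|a|,|b|)+1)$. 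On the bad set, one integrates $|\dot\tau_\mu| \le \|\grad_J\mathscr{A}^H_\varphi\|_J$ and uses Cauchy--Schwarz together with the energy bound. This is a verbatim copy of \cite[Proposition~3.3]{cieliebakfrauenfelder:rfh:2009}, where nothing in the argument sees the twist because the Fundamental Lemma already absorbs the correction from $f_\varphi$.

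Next I would establish the $L^\infty$-bound on $u_\mu$, i.e.\ show that its image stays in a compact subset $W \cup ([0,\rho]\times\Sigma) \subset M$. Off the cylindrical end the inequality is automatic because $dH$ is compactly supported, so $\tau X_H = 0$ far out and the Floer equation reduces to a genuine $J$-holomorphic cylinder; together with exactness of $\lambda$ this forbids escape via the standard maximum principle argument. The key point is that the SFT-like construction \eqref{eq:SFT-like} of $J_t$ near the cylindrical end is precisely the twisted analogue of the usual cylindrical almost complex structure, so the coordinate $r(s,t)$ is subharmonic along negative gradient flow lines exactly as in the untwisted case; this is \cite[Proposition~3.5]{cieliebakfrauenfelder:rfh:2009} transplanted to our twisted loops, and the $\varphi$-invariance $\varphi^*J_{t+1} = J_t$ makes the subharmonicity descend consistently to twisted loops.

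Once $\tau_\mu$ and $u_\mu$ have uniform $L^\infty$-bounds and the loops are confined to a fixed compact set $K \subset M$, the twisted Floer equation $\partial_s u + J_t(\partial_t u - \tau X_H(u)) = 0$ becomes a uniformly elliptic PDE on compact sets $[-T,T]\times\mathbb{R}/\mathbb{Z}$ (after lifting by the twist: $u$ is the same data as a map $\mathbb{R}\times\mathbb{R}\to M$ intertwining the $\mathbb{Z}$-actions). Exactness of $d\lambda$ on $M$ and exactness of $\varphi^*\lambda - \lambda$ rule out both sphere and disc bubbling, since the energy of any bubble would be the symplectic area of a closed or boundary-twisted sphere and must vanish. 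A standard bubbling-off analysis therefore yields a uniform $C^1$-bound on $\partial_s u_\mu$, and elliptic bootstrapping from the linearised Cauchy--Riemann operator upgrades this to uniform $C^k_{\loc}$-bounds for every $k$. Arzel\`a--Ascoli on the reparametrised sequence $(u_{\mu_\nu}(\cdot+s_{\mu_\nu}), \tau_{\mu_\nu}(\cdot+s_{\mu_\nu}))$ then extracts a $C^\infty_{\loc}$-convergent subsequence, whose limit solves the negative gradient flow equation because the equation is preserved under $C^1_{\loc}$-limits.

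The main obstacle I anticipate is the no-bubbling step combined with verifying that the maximum principle transfers cleanly to the twisted setting. The usual argument relies on the cylindrical form of $\lambda$ and $J$ on the collar, and while our adapted $J_t$ is constructed to be $\varphi$-compatible, one must carefully check that the $\varphi$-twisting of loops does not introduce a boundary term when integrating the subharmonic function $r(s,t) = \max$ over $[0,1]$: this works because $\varphi(\Sigma)=\Sigma$ preserves the $r$-coordinate on the collar by \eqref{eq:phi-invariance}, so $r(u(s,t+1)) = r(\varphi(u(s,t))) = r(u(s,t))$ and the maximum principle on a twisted cylinder reduces to the untwisted one on the $m$-fold cover $\bar u$ (cf.\ Remark \ref{rem:finite_order}). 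Everything else is a routine transcription of the compactness proof in \cite{cieliebakfrauenfelder:rfh:2009}.
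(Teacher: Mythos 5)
Your proof is correct and follows the same three-step strategy as the paper's: bound the Lagrange multipliers via the twisted Fundamental Lemma (the paper cites \cite[Corollary~3.5]{cieliebakfrauenfelder:rfh:2009}, which is precisely your ``bad-times'' argument), confine the loops to a fixed compact set using the adapted almost complex structure and the energy bound (the paper invokes \cite[Corollary~9.2.11]{mcduffsalamon:J-holomorphic_curves:2012} rather than spelling out the maximum principle, but the substance is identical), and then exclude bubbling by symplectic asphericity before bootstrapping to $C^\infty_{\loc}$-convergence. One small superfluous worry in your write-up: disc bubbling cannot arise since the Floer cylinders carry no Lagrangian boundary condition, so only sphere bubbling needs to be excluded --- which is exactly what the paper's argument does.
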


\begin{proof}
	The proof \cite[p.~268]{cieliebakfrauenfelder:rfh:2009} goes through without any changes as we have a twisted version of the Fundamental Lemma. However, for convenience, we reproduce the main arguments here. We need to establish
	\begin{itemize}
		\item a uniform $L^\infty$-bound on $u_\mu$,
		\item a uniform $L^\infty$-bound on $\tau_\mu$,
		\item a uniform $L^\infty$-bound on the derivatives of $u_\mu$.
	\end{itemize}
	Indeed, by elliptic bootstrapping \cite[Theorem~B.4.1]{mcduffsalamon:J-holomorphic_curves:2012} the negative gradient flow equation we will obtain $C^\infty_{\loc}$-convergence by \cite[Theorem~B.4.2]{mcduffsalamon:J-holomorphic_curves:2012}.

	To obtain a uniform $L^\infty$-bound on the sequence of twisted negative gradient flow lines $u_\mu$, observe that by definition of Rabinowitz--Floer data for $\varphi$, there exists $r \in \intoo[0]{0,+\infty}$ with
	\begin{equation*}
		\supp X_H \cap \intco[0]{r,+\infty} \times \Sigma = \emptyset
	\end{equation*}
	\noindent and $J_t$ is adapted to the boundary of $W \cup_\Sigma \intcc[0]{0,r} \times \Sigma$ for all $t \in \mathbb{R}$. Consequently, \cite[Corollary~9.2.11]{mcduffsalamon:J-holomorphic_curves:2012} implies that every $u_\mu$ remains inside the compact set $W \cup_\Sigma \intcc[0]{0,r} \times \Sigma$ as the asymptotics belong to $W \cup_\Sigma \intco[0]{0,r} \times \Sigma$ for all $\mu \in \mathbb{N}$. Indeed, this follows from
	\allowdisplaybreaks
	\begin{align*}
		E_J(u_\mu,\tau_\mu) &= \int_{-\infty}^{+\infty} \norm[0]{\partial_s(u_\mu,\tau_\mu)}^2_J ds\\
		&= \int_{-\infty}^{+\infty} \langle \partial_s (u_\mu,\tau_\mu),\partial_s (u_\mu,\tau_\mu)\rangle_Jds\\
		&= -\int_{-\infty}^{+\infty} \langle\grad_J\mathscr{A}^H_\varphi\vert_{(u_\mu(s),\tau_\mu(s))},\partial_s (u_\mu,\tau_\mu)\rangle_Jds\\
		&= -\int_{-\infty}^{+\infty} d\mathscr{A}^H_\varphi(\partial_s (u_\mu,\tau_\mu))ds\\
		&= -\int_{-\infty}^{+\infty} \partial_s \mathscr{A}^H_\varphi(u_\mu,\tau_\mu)ds\\
		&= \lim_{s \to -\infty}\mathscr{A}^H_\varphi(u_\mu(s),\tau_\mu(s)) - \lim_{s \to +\infty} \mathscr{A}^H_\varphi(u_\mu(s),\tau_\mu(s))\\
		&\leq b - a,
	\end{align*}
	\noindent as this implies
	\begin{equation*}
		\lim_{s \to \pm \infty} \norm[0]{\partial_s(u_\mu,\tau_\mu)}_J = \lim_{s \to \pm \infty}\norm[1]{\grad_J \mathscr{A}^H_\varphi\vert_{(u_\mu(s),\tau_\mu(s))}}_J = 0
	\end{equation*}
	\noindent by the negative gradient flow equation. 

	The uniform $L^\infty$-bound on the Lagrange multipliers $\tau_\mu$ follows from the Fundamental Lemma \ref{lem:fundamental_lemma} by arguing as in \cite[Corollary~3.5]{cieliebakfrauenfelder:rfh:2009}. 

	Lastly, the uniform $L^\infty$-bound on the derivatives of $u_\mu$ follows from standard bubbling-off analysis. Indeed, if the derivatives of $u_\mu$ are unbounded, then there exists a nonconstant pseudoholomorphic sphere as in \cite[Section~4.2]{mcduffsalamon:J-holomorphic_curves:2012}. This is impossible as $M$ is an exact symplectic manifold and thus in particular symplectically aspherical.
\end{proof}

\section{Definition of Twisted Rabinowitz--Floer Homology} 
\label{sec:definition}
In this section we define twisted Rabinowitz--Floer homology for the setting introduced in the last section. Note that here we make implicit use of the requirement that a Liouville automorphism has finite order near the boundary. This is crucial because then the arguments go through as in the case of loops by Remark \ref{rem:finite_order}. 

\begin{definition}[Transverse Conley--Zehnder Index]
	\label{def:CZ-index}
	Let $(W^{2n},\lambda)$ be a Liouville domain with boundary $\Sigma$. Let $(\gamma_0,\tau_0), (\gamma_1,\tau_1) \in \mathscr{P}_\varphi(\Sigma,\lambda\vert_\Sigma)$ for some $\varphi \in \Aut(W,\lambda)$ such that there exists a path $\gamma_\sigma$ in $\mathscr{L}_\varphi \Sigma$ from $\gamma_0$ to $\gamma_1$. Define the \bld{transverse Conley--Zehnder index} by
	\begin{equation*}
		\mu((\gamma_0,\tau_0),(\gamma_1,\tau_1)) := \mu_{\CZ}(\Psi^1) - \mu_{\CZ}(\Psi^0) \in \mathbb{Z},
	\end{equation*}
	\noindent with
	\begin{align*}
		\Psi^0 \colon I \to \Sp(n - 1), \qquad &\Psi^0_t := \Phi_{t,0}^{-1} \circ D^\xi \theta^R_{\tau_0t} \circ \Phi_{0,0},\\
		\Psi^1 \colon I \to \Sp(n - 1), \qquad &\Psi^1_t := \Phi_{t,1}^{-1} \circ D^\xi \theta^R_{\tau_1t} \circ \Phi_{0,1},
	\end{align*}
	\noindent where $\Phi_{t,\sigma} \colon \mathbb{R}^{2n - 2} \to \xi_{\gamma_\sigma(t)}$ is a symplectic trivialisation of $F^*\xi$, $\xi := \ker \lambda\vert_\Sigma$ with $F \in C^\infty(\mathbb{R} \times I,M)$ being defined by $F(t,\sigma) := \gamma_\sigma(t)$, satisfying
	\begin{equation*}
		\Phi_{t + 1,\sigma} = D\varphi \circ \Phi_{t,\sigma} \qquad \forall (t,\sigma) \in \mathbb{R} \times I.
	\end{equation*}	
\end{definition}
 
\begin{remark}
	The transverse Conley--Zehnder index, or more precisely, the twisted relative transverse Conley--Zehnder index, does not depend on the choice of trivialisation. Denote by 
	\begin{equation*}
		\Sigma_\varphi := \frac{\Sigma \times \mathbb{R}}{(\varphi(x),t + 1){\sim}(x,t)}
	\end{equation*}
	\noindent the mapping torus of $\varphi$ giving rise to the fibration
	\begin{equation*}
		\pi_\varphi \colon \Sigma_\varphi \to \mathbb{S}^1, \qquad \pi_\varphi([x,t]) := [t].
	\end{equation*}
	The vertical bundle $\ker D^\xi \pi_\varphi \to \Sigma_\varphi$ is a symplectic vector bundle. One can show, that if $c_1(\ker D^\xi\pi_\varphi) = 0$, then the transverse Conley--Zehnder index is independent of the choice of path in $\mathscr{L}_\varphi \Sigma$ from $\gamma_0$ to $\gamma_1$.
	\label{rem:grading}
\end{remark}

Let $(H,J)$ be Rabinowitz--Floer data for $\varphi \in \Aut(W,\lambda)$. Set
\begin{equation*}
	\Sigma := \partial W \qquad \text{and} \qquad M := W \cup_\Sigma \intco[0]{0, +\infty} \times \Sigma.
\end{equation*}
Fix $(\eta,\tau_\eta) \in \mathscr{P}_\varphi(\Sigma,\lambda\vert_\Sigma)$ and denote by $[\eta]$ the corresponding class in $\pi_0 \mathscr{L}_\varphi \Sigma$. Assume that the twisted Rabinowitz action functional $\mathscr{A}^H_\varphi$ is Morse--Bott, that is, $\Crit \mathscr{A}^H_\varphi \subseteq \Sigma \times \mathbb{R}$ is a properly embedded submanifold by Proposition \ref{prop:kernel_hessian_contact}, and fix a Morse function $h \in C^\infty(\Crit \mathscr{A}^H_\varphi)$. Define the twisted Rabinowitz--Floer chain group $\RFC^\varphi(\Sigma,M)$ to be the $\mathbb{Z}_2$-vector space consisting of all formal linear combinations
\begin{equation*}
	\zeta = \sum_{\substack{(\gamma,\tau) \in \Crit(h)\\{[\gamma] = [\eta]}}} \zeta_{(\gamma,\tau)} (\gamma,\tau)
\end{equation*}
\noindent satisfying the Novikov finiteness condition
\begin{equation*}
	\#\cbr[0]{(\gamma,\tau) \in \Crit(h) : \zeta_{(\gamma,\tau)} \neq 0, \mathscr{A}^H_\varphi(\gamma,\tau) \geq \kappa} < \infty \qquad \forall \kappa \in \mathbb{R}.
\end{equation*}
Define a boundary operator
\begin{equation*}
	\partial \colon \RFC^\varphi(\Sigma,M) \to \RFC^\varphi(\Sigma,M) 
\end{equation*}
\noindent by
\begin{equation*}
	\partial (\gamma^-,\tau^-) := \sum_{\substack{(\gamma^+,\tau^+) \in \Crit(h)\\{[\gamma^+] = [\gamma^-]}}} n_\varphi(\gamma^\pm,\tau^\pm)(\gamma^+,\tau^+),
\end{equation*}
\noindent where 
\begin{equation*}
	n_\varphi(\gamma^\pm,\tau^\pm) := \#_2\mathcal{M}^1_\varphi(\gamma^\pm,\tau^\pm)/\mathbb{R} \in \mathbb{Z}_2,
\end{equation*}
\noindent with $\mathcal{M}^1_\varphi(\gamma^\pm,\tau^\pm)$ being the one-dimensional component of the moduli space of all twisted negative gradient flow lines with cascades from $(\gamma^-,\tau^-)$ to $(\gamma^+,\tau^+)$. This is well-defined by the Compactness Theorem \ref{thm:compactness}. Define the \bld{twisted Rabinowitz--Floer homology of $\Sigma$ and $\varphi$} by
\begin{equation*}
	\RFH^\varphi(\Sigma,M) := \frac{\ker \partial}{\im \partial}.
\end{equation*}
 
\begin{proposition}
	\label{prop:fixed_points}
	Let $(W,\lambda)$ be a Liouville domain with simply connected boundary $\Sigma$ and $\varphi \in \Aut(W,\lambda)$. If there do not exist any nonconstant twisted periodic Reeb orbits on $\Sigma$, then
	\begin{equation*}
		\RFH^\varphi_\ast(\Sigma,M) \cong \operatorname{H}_\ast(\Fix(\varphi\vert_\Sigma);\mathbb{Z}_2).
	\end{equation*}	
\end{proposition}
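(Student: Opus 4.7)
The strategy is to identify $\Crit \mathscr{A}^H_\varphi$ with $\Fix(\varphi|_\Sigma)$, to verify the Morse--Bott property, and then to observe that all twisted negative gradient flow lines between critical points are constant, so that the cascade complex collapses to the ordinary Morse complex of the auxiliary function $h$.

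First, by Corollary \ref{cor:critical_points_perturbed_twisted_Rabinowitz_functional} together with $X_H\vert_\Sigma = R$, critical points of $\mathscr{A}^H_\varphi$ are precisely the twisted parametrised Reeb orbits on $\Sigma$. By hypothesis no nonconstant twisted Reeb orbits exist, and since the Reeb vector field is nowhere vanishing, any constant critical loop forces $\tau = 0$. The twist condition $\gamma(1) = \varphi(\gamma(0))$ then yields $\gamma(0) \in \Fix(\varphi\vert_\Sigma)$, so that
\begin{equation*}
	\Crit \mathscr{A}^H_\varphi \;\cong\; \Fix(\varphi\vert_\Sigma) \times \{0\}.
\end{equation*}
Since $\varphi\vert_\Sigma$ has finite order, averaging any Riemannian metric yields a $\varphi$-invariant one; the associated $\varphi$-equivariant exponential map identifies a neighbourhood of $p \in \Fix(\varphi\vert_\Sigma)$ with a neighbourhood of $0 \in T_p\Sigma$ on which $\varphi$ acts linearly. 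Hence $\Fix(\varphi\vert_\Sigma)$ is a closed embedded submanifold with $T_p \Fix(\varphi\vert_\Sigma) = \ker(D\varphi\vert_p - \id_{T_p\Sigma})$. Simple connectedness of $\Sigma$ guarantees that $\mathscr{L}_\varphi \Sigma$ is path-connected, so every such constant loop lies in the distinguished homotopy class $[\eta]$ indexing $\RFC^\varphi(\Sigma, M)$.

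Next I would check the Morse--Bott condition. At $(\gamma_p, 0)$, Proposition \ref{prop:kernel_hessian_contact} gives
\begin{equation*}
	\ker \Hess \mathscr{A}^H_\varphi\vert_{(\gamma_p, 0)} \;\cong\; \ker\bigl(D\varphi\vert_p - \id_{T_p\Sigma}\bigr),
\end{equation*}
since $\phi^R_0 = \id$. By the previous step this equals $T_p \Fix(\varphi\vert_\Sigma)$, so the Hessian is nondegenerate transverse to the critical manifold, as required.

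For the final step, Remark \ref{rem:period-action_equality} shows that $\mathscr{A}^H_\varphi$ vanishes identically on $\Crit \mathscr{A}^H_\varphi$. The energy of a twisted negative gradient flow line between two critical points equals the drop in action, hence is zero; such a flow line must therefore be constant. Consequently no nontrivial cascades exist, and the Morse--Bott cascade differential on $\RFC^\varphi(\Sigma, M)$ reduces to the ordinary Morse differential of $h$ on $\Fix(\varphi\vert_\Sigma)$. Passing to homology yields
\begin{equation*}
	\RFH^\varphi_*(\Sigma, M) \;\cong\; H_*\bigl(\Fix(\varphi\vert_\Sigma); \mathbb{Z}_2\bigr).
\end{equation*}

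The main subtlety I anticipate is not analytical but organisational: verifying that under the present hypotheses the cascade machinery genuinely collapses to the Morse complex. This is standard in the untwisted case and, via the finite-order reduction of Remark \ref{rem:finite_order}, carries over to the twisted setting, but the homotopy class bookkeeping for $[\eta]$ is precisely where simple connectedness of $\Sigma$ is needed to ensure that \emph{all} fixed points contribute to the complex.
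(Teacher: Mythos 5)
Your proof is correct and follows essentially the same route as the paper: identify the critical set with $\Fix(\varphi|_\Sigma)$, verify Morse--Bott via the kernel formula, and use the period--action equality to kill all nonconstant cascades. The one thing the paper makes explicit that you gloss over is the grading: it records that for two constant twisted loops $c_x,c_y$ the relative transverse Conley--Zehnder index $\mu((c_y,0),(c_x,0))$ vanishes, so the $\mathbb{Z}$-grading on $\RFC^\varphi(\Sigma,M)$ is literally $\ind_h$; this is where simple connectedness is used a second time (to choose the comparison path of twisted loops so that the index is well-defined). Your collapsing argument establishes the isomorphism of chain complexes, but to get a \emph{graded} isomorphism $\RFH^\varphi_*\cong \operatorname{H}_*$ one should add this index computation.
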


\begin{proof}
	If there do not exist any nonconstant twisted periodic Reeb orbits,
	\begin{equation*}
		\Crit \mathscr{A}^H_\varphi = \cbr[0]{(c_x,0) : x \in \Fix(\varphi\vert_\Sigma)} \cong \Fix(\varphi\vert_\Sigma)
	\end{equation*}
	\noindent for any $H \in \mathscr{F}_\varphi(\Sigma)$. Since $\Fix(\varphi\vert_\Sigma)$ is a properly embedded submanifold of $\Sigma$ by \cite[Problem~8-32]{lee:dg:2018} or \cite[Lemma~5.5.7]{mcduffsalamon:st:2017}, $\mathscr{A}^H_\varphi$ is Morse--Bott. Let $x,y \in \Fix(\varphi\vert_\Sigma)$. As $\Sigma$ is simply connected by assumption, there exists a path $\gamma$ from $x$ to $y$ in $\Sigma$ and a homotopy from $\gamma$ to $\varphi \circ \gamma$ with fixed endpoints. Extend this homotopy to a path in $\mathscr{L}_\varphi \Sigma$ from $c_x$ to $c_y$. Choose a Morse function $h$ on $\Fix(\varphi\vert_\Sigma)$ and any critical point $c_x \in \Fix(\varphi\vert_\Sigma)$. Then a $\mathbb{Z}$-grading of $\RFC^\varphi(\Sigma,M)$ is given by the index 
	\begin{equation*}
		\mu((c_y,0),(c_x,0)) + \ind_h(c_y) = \ind_h(c_y) \qquad \forall c_y \in \Crit(h),
	\end{equation*}
	\noindent and consequently,
	\begin{equation*}
		\RFH^\varphi_\ast(\Sigma,M) = \HM_\ast(\Fix(\varphi\vert_\Sigma);\mathbb{Z}_2) \cong \operatorname{H}_\ast(\Fix(\varphi\vert_\Sigma);\mathbb{Z}_2)
	\end{equation*}
	\noindent as there are only twisted negative gradient flow lines with zero cascades, that is, ordinary Morse gradient flow lines of $h$. Indeed, suppose that there is a nonconstant twisted negative gradient flow line $(u,\tau)$ of $\mathscr{A}^H_\varphi$ such that
	\begin{equation*}
		\lim_{s \to \pm \infty} (u(s),\tau(s)) = (\gamma^\pm,\tau^\pm) \in \Crit \mathscr{A}^H_\varphi.
	\end{equation*}
	Using the twisted negative gradient flow equation we estimate
	\allowdisplaybreaks
	\begin{align*}
		\tau_- - \tau_+ &= \mathscr{A}^H_\varphi(\gamma_-,\tau_-) - \mathscr{A}^H_\varphi(\gamma_+,\tau_+)\\
		&= \lim_{s \to -\infty} \mathscr{A}^H_\varphi(u(s),\tau(s)) - \lim_{s \to +\infty} \mathscr{A}^H_\varphi(u(s),\tau(s))\\
		&= -\int_{-\infty}^{+\infty} \partial_s \mathscr{A}^H_\varphi(u,\tau)ds\\
		&= -\int_{-\infty}^{+\infty} d\mathscr{A}^H_\varphi(\partial_s(u,\tau)) ds\\
		&= \int_{-\infty}^{+\infty} d\mathscr{A}^H_\varphi(\grad_J \mathscr{A}^H_\varphi\vert_{(u(s),\tau(s))})ds\\
		&= \int_{-\infty}^{+\infty} \norm[1]{\grad_J \mathscr{A}^H_\varphi\vert_{(u(s),\tau(s))}}^2_J ds\\
		&> 0.
	\end{align*}
	Hence $\tau_+ < \tau_-$, contradicting $\tau^\pm = 0$. 
\end{proof}

\section{Invariance of Twisted Rabinowitz--Floer Homology Under Twisted Homotopies of Liouville Domains}
\label{sec:invariance}

\begin{definition}[Twisted Homotopy of Liouville Domains]
	Given the completion $(M,\lambda)$ of a Liouville domain $(W_0,\lambda)$ and $\varphi \in \Aut(W,\lambda)$, a \bld{twisted homotopy of Liouville domains in $M$} is a time-dependent Hamiltonian function $H \in C^\infty(M \times I)$ such that
	\begin{enumerate}[label=\textup{(\roman*)}]
		\item $W_\sigma := H^{-1}_\sigma(\intoc[0]{-\infty,0})$ is a Liouville domain with symplectic form $d\lambda\vert_{W_\sigma}$ and boundary $\Sigma_\sigma := H^{-1}_\sigma(0)$ for all $\sigma \in I$,
		\item $H_\sigma \in \mathscr{F}_\varphi(\Sigma_\sigma)$ for all $\sigma \in I$,
		\item $\Sigma_\sigma \cap \supp f_\varphi = \emptyset$ for all $\sigma \in I$.
	\end{enumerate}
\end{definition}

Twisted Rabinowitz--Floer homology is stable under twisted homotopies of Liouville domains. This property is crucial for proving Theorem \ref{thm:noncontractible}.

\begin{theorem}[Invariance of Twisted Rabinowitz--Floer Homology]
	\label{thm:invariance}
	If $(H_\sigma)_{\sigma \in I}$ is a twisted homotopy of Liouville domains such that both $\mathscr{A}^{H_0}_\varphi$ and $\mathscr{A}^{H_1}_\varphi$ are Morse--Bott, then there is a canonical isomorphism
	\begin{equation*}
		\RFH^\varphi(\Sigma_0,M) \cong \RFH^\varphi(\Sigma_1,M).
	\end{equation*}
\end{theorem}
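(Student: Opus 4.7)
The plan is to adapt the standard continuation-map construction to the twisted Rabinowitz--Floer setting. First I would choose Rabinowitz--Floer data $(H_i,J_i)$ and a Morse function $h_i$ on $\Crit \mathscr{A}^{H_i}_\varphi$ for $i \in \{0,1\}$. Using the twisted homotopy $(H_\sigma)_{\sigma \in I}$, I would then pick a smooth family $(H_s,J_s)_{s \in \mathbb{R}}$ that agrees with $(H_0,J_0)$ for $s \leq -R$ and with $(H_1,J_1)$ for $s \geq R$, and that interpolates through twisted defining Hamiltonians $H_s \in \mathscr{F}_\varphi(\Sigma_s)$ in between. The continuation moduli space $\mathcal{M}_\varphi^{\mathrm{cont}}\bigl((\gamma^-,\tau^-),(\gamma^+,\tau^+)\bigr)$ consists of solutions $(u,\tau)$ of the $s$-dependent equation $\partial_s(u,\tau) = -\grad_{J_s}\mathscr{A}^{H_s}_\varphi\vert_{(u(s),\tau(s))}$ with the prescribed asymptotics in $\Crit \mathscr{A}^{H_0}_\varphi$ and $\Crit \mathscr{A}^{H_1}_\varphi$, and the continuation map $\Phi \colon \RFC^\varphi(\Sigma_0,M) \to \RFC^\varphi(\Sigma_1,M)$ counts rigid elements in its cascaded version.

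The main obstacle is to upgrade the compactness statement of Theorem \ref{thm:compactness} to this non-autonomous setting. The $L^\infty$-bound on the loop component $u$ will follow from the maximum principle combined with condition (i) of a twisted homotopy of Liouville domains, which guarantees a uniform conical SFT-structure outside a fixed compact set of $M$, and the derivative bound will again come from bubbling-off analysis and symplectic asphericity of $M$. The delicate step is the uniform bound on the Lagrange multiplier $\tau$. For this I would establish an $s$-uniform version of the Fundamental Lemma \ref{lem:fundamental_lemma}; condition (iii) of a twisted homotopy, namely $\Sigma_\sigma \cap \supp f_\varphi = \emptyset$, is precisely the hypothesis needed to make the estimate from \cite{cieliebakfrauenfelder:rfh:2009} go through uniformly in $\sigma$. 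Combined with a priori action bounds along the trajectory, which follow by integrating the identity $\partial_s \mathscr{A}^{H_s}_\varphi(u,\tau) = -\|\grad_{J_s}\mathscr{A}^{H_s}_\varphi\|_{J_s}^2 - \tau\int_0^1 (\partial_s H_s)(u)\,dt$ and exploiting the already-established $L^\infty$-bound on $u$ together with the compact support of $\partial_s H_s$, this yields the desired bound on $\tau$.

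With compactness in place, the chain-map property of $\Phi$ is standard: the codimension-one boundary of the one-dimensional component of the cascaded continuation moduli space consists of broken configurations which exactly represent $\partial_1 \Phi + \Phi \partial_0$. A reverse homotopy produces a continuation map $\Psi \colon \RFC^\varphi(\Sigma_1,M) \to \RFC^\varphi(\Sigma_0,M)$. To show that $\Phi$ and $\Psi$ are mutually inverse on homology, I would employ a two-parameter family of homotopies joining the concatenation of the original homotopy with its reverse to the constant homotopy at each endpoint, and count one-dimensional cascaded moduli spaces over this family to obtain a chain homotopy between $\Psi \circ \Phi$ and the identity on $\RFC^\varphi(\Sigma_0,M)$, and analogously for $\Phi \circ \Psi$. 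It is essential here that twisted homotopies of Liouville domains can be concatenated, reversed, and joined through two-parameter families while preserving conditions (i)--(iii), so that the compactness argument of the previous paragraph applies unchanged throughout. Canonicity of the resulting isomorphism then follows from the usual observation that any two choices of interpolating data are themselves joined by such a two-parameter twisted homotopy.
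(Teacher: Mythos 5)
Your high-level strategy — continuation maps built from a smooth $s$-dependent family $(H_s,J_s)$ interpolating between the two Rabinowitz--Floer data, together with the usual chain-map and chain-homotopy arguments — is conceptually the right one, and is indeed what underlies the paper's appeal to the adiabatic argument of \cite[p.~275--277]{cieliebakfrauenfelder:rfh:2009}. You also correctly identify the crux: a uniform $L^\infty$-bound on the Lagrange multiplier $\tau$ along $s$-dependent trajectories.

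However, your proposed resolution of that step is circular. You want to bound $\tau$ by combining an $s$-uniform Fundamental Lemma with a priori action bounds obtained by integrating
\begin{equation*}
\partial_s \mathscr{A}^{H_s}_\varphi(u(s),\tau(s)) \;=\; -\,\norm{\grad_{J_s}\mathscr{A}^{H_s}_\varphi}_{J_s}^2 \;-\; \tau(s)\int_0^1 (\partial_s H_s)(u(s,t))\,dt,
\end{equation*}
``exploiting the already-established $L^\infty$-bound on $u$ together with the compact support of $\partial_s H_s$.'' But the second term on the right-hand side contains $\tau(s)$ itself, so bounding the action along the trajectory already requires the bound on $\tau$ that you are trying to produce; and the Fundamental Lemma in turn takes an action bound as input. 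In contrast to standard Floer continuation, the action is not monotone along $s$-dependent twisted Rabinowitz flow lines, and this is precisely the obstruction that the cited ``adiabatic'' argument is designed to overcome: one first proves the estimate for homotopies whose total variation $\int_{-\infty}^{+\infty}\norm{\partial_s H_s}_\infty\,ds$ is sufficiently small, so that the $\tau$-dependent error term can be absorbed by a Gronwall-type bootstrap against the Fundamental Lemma, and one then treats an arbitrary twisted homotopy of Liouville domains by subdividing $I$ into finitely many short pieces and composing the resulting isomorphisms. This subdivision step is the heart of the proof and is absent from your proposal.

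A secondary point the paper flags that you do not address is the genericity of the Morse--Bott condition in the twisted setting: one needs a $\varphi$-equivariant version of \cite[Theorem~B.1]{cieliebakfrauenfelder:rfh:2009} asserting that for a second-category subset of $H \in C^\infty_\varphi(M)$ with $\supp dH$ compact, the functional $\mathscr{A}^H_\varphi$ is Morse--Bott with critical manifold consisting of $\Fix(\varphi\vert_{H^{-1}(0)})$ and a disjoint union of circles. This is used to perturb at the intermediate stages of the adiabatic decomposition, and the fact that the contact condition is open is what makes such twisted perturbations available.
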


\begin{proof}
	The proof follows from the same adiabatic argument as in \cite[p.~275--277]{cieliebakfrauenfelder:rfh:2009}. Crucial is that \cite[Theorem~3.6]{cieliebakfrauenfelder:rfh:2009} remains true in our setting as well as the genericity of the Morse--Bott condition. Indeed, if $(M,\lambda)$ is an exact symplectic manifold and $\varphi \in \Diff(M)$ is of finite order such that $\varphi^*\lambda = \lambda$, then we have the following generalisation of \cite[Theorem~B.1]{cieliebakfrauenfelder:rfh:2009}. Adapting the proof accordingly, one can show that there exists a subset
	\begin{equation*}
		\mathscr{U} \subseteq \{H \in C^\infty_\varphi(M) : \supp dH \text{ compact}\},
	\end{equation*}
	\noindent of the second category such that for every $H \in \mathscr{U}$, $\mathscr{A}^H_\varphi$ is Morse--Bott with critical manifold being $\Fix(\varphi\vert_{H^{-1}(0)})$ together with a disjoint union of circles. Again, this works only since the contact condition is an open condition.	
\end{proof}

\begin{remark}
	Invariance of twisted Rabinowitz--Floer homology allows us to define twisted Rabinowitz--Floer homology also in the case where $\mathscr{A}^H_\varphi$ is not necessarily Morse--Bott. Indeed, as the proof of Theorem \ref{thm:invariance} shows, we can perturb the hypersurface $\Sigma$ slightly to make it Morse--Bott. Thus we can define the twisted Rabinowitz--Floer homology of such a hypersurface to be the twisted Rabinowitz--Floer homology of any Morse--Bott perturbation. This is well-defined by Theorem \ref{thm:invariance}.
\end{remark}

\begin{corollary}[Independence]
	Let $\varphi \in \Aut(W,\lambda)$ and $H_0, H_1 \in \mathscr{F}_\varphi(\Sigma)$ be such that either $\mathscr{A}^{H_0}_\varphi$ or $\mathscr{A}^{H_1}_\varphi$ is Morse--Bott. Then the definition of twisted Rabinowitz--Floer homology $\RFH^\varphi(\Sigma,M)$ is independent of the choice of twisted defining Hamiltonian function for $\Sigma$.
\end{corollary}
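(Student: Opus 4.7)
The plan is to deduce the corollary directly from Theorem \ref{thm:invariance} by constructing the simplest possible twisted homotopy of Liouville domains: one that keeps the domain and its boundary fixed and interpolates linearly between the two defining Hamiltonians. Concretely, set
\begin{equation*}
	H_\sigma := (1-\sigma) H_0 + \sigma H_1, \qquad \sigma \in I,
\end{equation*}
so that $W_\sigma = W$ and $\Sigma_\sigma = \Sigma$ for every $\sigma$, and then verify the three conditions from the definition of a twisted homotopy of Liouville domains opening Section \ref{sec:invariance}.

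The verification is a short bookkeeping exercise. The $\varphi$-invariance of $H_\sigma$ and the compact support of $dH_\sigma$ are inherited from $H_0$ and $H_1$ by linearity. The Reeb condition is preserved by the trivial computation $X_{H_\sigma}\vert_\Sigma = (1-\sigma)R + \sigma R = R$. From the collar construction in Lemma \ref{lem:defining_Hamiltonian}, the normalisation $X_{H_i}\vert_\Sigma = R$ combined with $H_i\vert_\Sigma = 0$ forces each $H_i$ to be locally of the form $H_i = h_i(r)$ with $h_i'(0) = 1 > 0$ in a collar of $\Sigma$, so both $H_0$ and $H_1$ are negative on $\Int W$ and positive on the Liouville end. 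This sign pattern is stable under convex combinations, giving $H_\sigma^{-1}(0) = \Sigma$ and $\Sigma \cap \Crit H_\sigma = \emptyset$ for every $\sigma$. Finally, the condition $\Sigma_\sigma \cap \supp f_\varphi = \emptyset$ is automatic because $\supp f_\varphi \subseteq \Int W$ whereas $\Sigma = \partial W$ by the definition of a Liouville automorphism.

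To finish I would invoke Theorem \ref{thm:invariance} on this homotopy. The single-endpoint Morse--Bott hypothesis is already enough, because Proposition \ref{prop:kernel_hessian_contact} shows that both $\Crit \mathscr{A}^H_\varphi = \mathscr{P}_\varphi(\Sigma,\lambda\vert_\Sigma)$ and $\ker \Hess \mathscr{A}^H_\varphi$ depend only on $(\Sigma, \lambda\vert_\Sigma, \varphi)$ rather than on the specific $H \in \mathscr{F}_\varphi(\Sigma)$; hence the Morse--Bott property is either true for every twisted defining Hamiltonian or for none, and in particular propagates along the whole homotopy. The resulting canonical isomorphism between the two versions of $\RFH^\varphi(\Sigma,M)$ is precisely the asserted independence. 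I do not expect any serious obstacle: the only step requiring genuine attention is the sign analysis guaranteeing that the zero set does not jump during the homotopy, which is locked in by the Reeb-condition normalisation.
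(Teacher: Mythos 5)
Your proof follows the paper's argument: interpolate linearly, verify the interpolation stays in $\mathscr{F}_\varphi(\Sigma)$, and invoke Theorem~\ref{thm:invariance}. Two remarks. First, your claim that the normalisation $X_{H_i}\vert_\Sigma = R$ together with $H_i\vert_\Sigma = 0$ \emph{forces} $H_i$ to be locally of the form $h_i(r)$ in a collar is false: the Reeb condition only pins down $dH_i$ on $\Sigma$, so e.g.\ $H_i = r + r^2 g(x)$ is allowed. The correct (and simpler) route, which is what the paper does, is to observe that for any $H \in \mathscr{F}_\varphi(\Sigma)$ the identity
\begin{equation*}
	dH(X)\vert_\Sigma = d\lambda(X,X_H)\vert_\Sigma = \lambda(X_H)\vert_\Sigma = \lambda(R) = 1
\end{equation*}
holds, and since $H^{-1}(0) = \Sigma$ by hypothesis and $M\setminus\Sigma$ has exactly two connected components $\Int W$ and $M\setminus W$, this single sign fixes $H < 0$ on $\Int W$ and $H > 0$ on $M\setminus W$; your sign conclusion is therefore right, but the ``locally $h_i(r)$'' justification should be dropped. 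Second, your observation that Proposition~\ref{prop:kernel_hessian_contact} makes both $\Crit \mathscr{A}^H_\varphi$ and $\ker\Hess\mathscr{A}^H_\varphi$ manifestly independent of the particular $H \in \mathscr{F}_\varphi(\Sigma)$, so that the Morse--Bott property at one endpoint automatically holds for all $H_\sigma$, is a genuinely useful addition: the paper states the corollary with the weaker ``either $\mathscr{A}^{H_0}_\varphi$ or $\mathscr{A}^{H_1}_\varphi$ is Morse--Bott'' hypothesis without explaining why Theorem~\ref{thm:invariance} (which needs both endpoints Morse--Bott) applies, and your remark closes exactly that gap.
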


\begin{proof}
	Note that $\mathscr{F}_\varphi(\Sigma)$ is a convex space. Indeed, set 
	\begin{equation*}
		H_\sigma := (1 - \sigma)H_0 + \sigma H_1 \qquad \sigma \in I.
	\end{equation*}
	Then $\varphi^*H_\sigma = H_\sigma$, $dH_\sigma$ has compact support and $X_{H_\sigma}\vert_\Sigma = R$ for all $\sigma \in I$. Moreover, for the Liouville vector field $X \in \mathfrak{X}(M)$ we compute
	\begin{equation*}
		\frac{d}{dt}\bigg\vert_{t = 0} H \circ \phi^X_t\vert_\Sigma = dH(X)\vert_\Sigma = d\lambda(X,X_H)\vert_\Sigma = \lambda(X_H)\vert_\Sigma = \lambda(R) = 1, 
	\end{equation*}
	\noindent for any $H \in \mathscr{F}_\varphi(\Sigma)$, and thus $H < 0$ on $\Int W$ and $H > 0$ on $M \setminus W$. Consequently, $H^{-1}_\sigma(0) = \Sigma$ and so $H_\sigma \in \mathscr{F}_\varphi(\Sigma)$ for all $\sigma \in I$. Hence $(H_\sigma)_{\sigma \in I}$ is a twisted homotopy of Liouville domains in $M$ and Theorem \ref{thm:invariance} implies the claim.
\end{proof}

\section{Twisted Leaf-Wise Intersection Points}

\begin{definition}[Twisted Leaf-Wise Intersection Point]
	Let $(M,\lambda)$ be the completion of a Liouville domain $(W,\lambda)$ and let $\varphi \in \Aut(W,\lambda)$ be a Liouville automorphism. A point $x \in \Sigma$ is a \bld{twisted leaf-wise intersection point} for a Hamiltonian symplectomorphism $\varphi_F \in \Ham(M,d\lambda)$, iff
	\begin{equation*}
		\varphi_F(x) \in L_{\varphi(x)} = \cbr[0]{\phi^R_t(\varphi(x)) : t \in \mathbb{R}}.
	\end{equation*}
\end{definition}

\begin{definition}[Twisted Moser Pair]
	Let $\varphi \in \Aut(W,\lambda)$. A \bld{twisted Moser pair} is defined to be a tuple $\mathfrak{M} := (\chi H,F)$, where 
	\begin{enumerate}[label=\textup{(\roman*)}]
		\item $H \in C^\infty_\varphi(M)$, $F \in C^\infty_\varphi(M \times \mathbb{R})$ and $\chi \in C^\infty(\mathbb{S}^1,I)$ such that $\int_0^1 \chi = 1$. Any time-dependent Hamiltonian function $\chi H$ is said to be \bld{weakly time-dependent}.
		\item $\supp \chi \subseteq \intoo[1]{0,\frac{1}{2}}$ and $F_t = 0$ for all $t \in \intcc[1]{0,\frac{1}{2}}$.
	\end{enumerate}
\end{definition}

\begin{lemma}	
	Let $\varphi \in \Aut(W,\lambda)$. For all $H \in \mathscr{F}_\varphi(\Sigma)$ and $\varphi_F \in \Ham(M,d\lambda)$ there exists a corresponding twisted Moser pair $\mathfrak{M}$ such that the flow of $\chi X_H$ is a time-reparametrisation of the flow of $X_H$.  
	\label{lem:twisted_Moser_pair}
\end{lemma}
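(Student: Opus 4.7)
The plan is to construct the two ingredients of a twisted Moser pair separately, and to use the autonomy of $H$ to get the flow‐reparametrisation property essentially for free.

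First I would build $\chi$. Pick any nonnegative bump function $\chi_0 \in C^\infty_c(\intoo{0,1/2})$ and normalise so that $\int_0^1 \chi_0 = 1$. View $\chi_0$ as a smooth function on $\mathbb{S}^1$ by $1$-periodic extension; because $\chi_0$ vanishes in a neighbourhood of the integers, this extension is smooth. Since $H$ is autonomous, $X_{\chi(t)H} = \chi(t) X_H$, so the nonautonomous flow of $\chi H$ coincides with $\phi^H_{s(t)}$ where $s(t) := \int_0^t \chi(u)\,du$. This is precisely a (monotone) time-reparametrisation of the flow of $X_H$, and the condition $\int_0^1 \chi = 1$ gives $\phi^{\chi H}_1 = \phi^H_1$.

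Next I would build $F$. Given $\varphi_F \in \Ham(M,d\lambda)$, pick any generating Hamiltonian $G \in C^\infty(M \times \intcc{0,1})$ with $\phi^G_1 = \varphi_F$. Using the standard time-reparametrisation trick, choose a smooth monotone $\beta \colon \intcc{0,1} \to \intcc{0,1}$ equal to $0$ on a neighbourhood of $\intcc{0,1/2}$ and equal to $1$ on a neighbourhood of $1$, and replace $G$ by $\tilde G_t := \beta'(t)\, G_{\beta(t)}$. Then $\phi^{\tilde G}_1 = \phi^G_1 = \varphi_F$ and $\tilde G_t$ vanishes for $t \in \intcc{0,1/2}$ as well as near $t=1$. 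Extend $\varphi$ from $W$ to $M$ by $\varphi(r,x) := (r,\varphi|_\Sigma(x))$ on the cylindrical end $\intco{0,+\infty} \times \Sigma$ (this is well defined because $\varphi$ has finite order near $\partial W$ and preserves $\lambda|_\Sigma$), so that $\varphi$ is a diffeomorphism of $M$. Now define $F_t := \tilde G_t$ on $\intcc{0,1}$ and extend to all of $\mathbb{R}$ by the twist rule
\begin{equation*}
F_{t+k}(x) := F_t\bigl(\varphi^{-k}(x)\bigr), \qquad k \in \mathbb{Z}.
\end{equation*}

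Finally I would verify the axioms. By construction $\varphi^*F_{t+1} = F_t$, so $F \in C^\infty_\varphi(M \times \mathbb{R})$, and $F_t = 0$ for $t \in \intcc{0,1/2}$. Smoothness of $F$ at integer times is the only subtle point: it holds because $\tilde G$ vanishes on neighbourhoods of both $t=0$ and $t=1$, so the twist merely glues flat (identically zero) pieces together. Together with $\supp \chi \subseteq \intoo{0,1/2}$, this shows that $\mathfrak{M} := (\chi H, F)$ is a twisted Moser pair with $\phi^F_1 = \varphi_F$. The only possible obstacle is ensuring the smooth compatibility of the twist extension at $t \in \mathbb{Z}$, and the reparametrisation by $\beta$ is precisely engineered to make this automatic; no further analytic work is needed.
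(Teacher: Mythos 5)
Your proof is correct and follows essentially the same route as the paper's. The paper fixes a smooth $\rho\colon I\to I$ with $\rho\equiv 0$ on $\intcc[1]{0,\tfrac{1}{2}}$ and $\rho\equiv 1$ on $\intcc[1]{\tfrac{2}{3},1}$, then defines in a single formula $\tilde F(x,t):=\dot\rho(t-k)F(\varphi^{-k}(x),\rho(t-k))$ for $t\in\intcc[0]{k,k+1}$, which is exactly your "reparametrise by $\beta$, then twist by $F_{t+k}(x):=F_t(\varphi^{-k}(x))$" written as one step; and it uses the identical observation $\phi^{\chi X_H}_t=\phi^H_{\tau(t)}$ with $\tau(t)=\int_0^t\chi$. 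The one small addition you make is to spell out that $\varphi$ must be extended to the cylindrical end of $M$ before $C^\infty_\varphi(M\times\mathbb R)$ even makes sense, a point the paper leaves implicit, and to stress that $\beta'$ vanishing near both $0$ and $1$ is what makes the twisted extension smooth across integer times — the paper's $\rho$ satisfies this too since $\rho\equiv 0$ on $\intcc[1]{0,\tfrac{1}{2}}$ and $\rho\equiv 1$ on $\intcc[1]{\tfrac{2}{3},1}$ forces $\dot\rho$ to vanish identically near $0$ and $1$.
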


\begin{proof}
	For constructing the Hamiltonian perturbation $\tilde{F}$, fix $\rho \in C^\infty(I,I)$ such that 
	\begin{equation*}
		\rho(t) = \begin{cases}
			0 & t \in \intcc[1]{0,\frac{1}{2}},\\
			1 & t \in \intcc[1]{\frac{2}{3},1}.
		\end{cases}
	\end{equation*}
	See Figure \ref{fig:rho}. Then define $\tilde{F} \in C^\infty_\varphi(M \times \mathbb{R})$ by 
	\begin{equation*}
		\tilde{F}(x,t) := \dot{\rho}(t - k)F\del[1]{\varphi^{-k}(x),\rho(t - k)} \qquad \forall t \in \intcc[0]{k,k + 1},
	\end{equation*}
	\noindent for $k \in \mathbb{Z}$. See Figure \ref{fig:rho_dot}. Then $\tilde{F}_t = 0$ for all $t \in \intcc[1]{0,\frac{1}{2}}$, and 
	\begin{equation*}
		\phi^{\tilde{F}}_t = \phi^F_{\rho(t)} \qquad \forall t \in I,
	\end{equation*}
	\noindent where $\phi$ denotes the smooth flow of the induced time-dependent Hamiltonian vector field. Indeed, we compute
	\begin{equation*}
		\frac{d}{dt}\phi^F_{\rho(t)} = \dot{\rho}(t)\frac{d}{d\rho}\phi^F_{\rho(t)} = \dot{\rho}(t)\del[1]{X_{F_{\rho(t)}} \circ \phi^F_{\rho(t)}} = X_{\tilde{F}_t} \circ \phi^F_{\rho(t)}.
	\end{equation*}
	In particular
	\begin{equation*}
		\varphi_{\tilde{F}} = \phi^{\tilde{F}}_1 = \phi^F_{\rho(1)} = \phi^F_1 = \varphi_F.
	\end{equation*}
	Finally, we have that
	\begin{equation*}
		\phi^{\chi X_H}_t = \phi^H_{\tau(t)} \qquad \text{with} \qquad \tau(t) := \int_0^t \chi.
	\end{equation*}
	Indeed, we compute
	\begin{equation*}
		\frac{d}{dt}\phi^H_ {\tau(t)} = \chi(t) \frac{d}{d\tau} \phi^H_{\tau(t)} = \chi(t)X_H \circ \phi^H_{\tau(t)},
	\end{equation*}
	\noindent and thus we conclude by the uniqueness of integral curves.
\end{proof}

\begin{figure}[h!tb]
    \centering
    \begin{subfigure}[b]{0.48\textwidth}
		\centering
        \includegraphics[width=\textwidth]{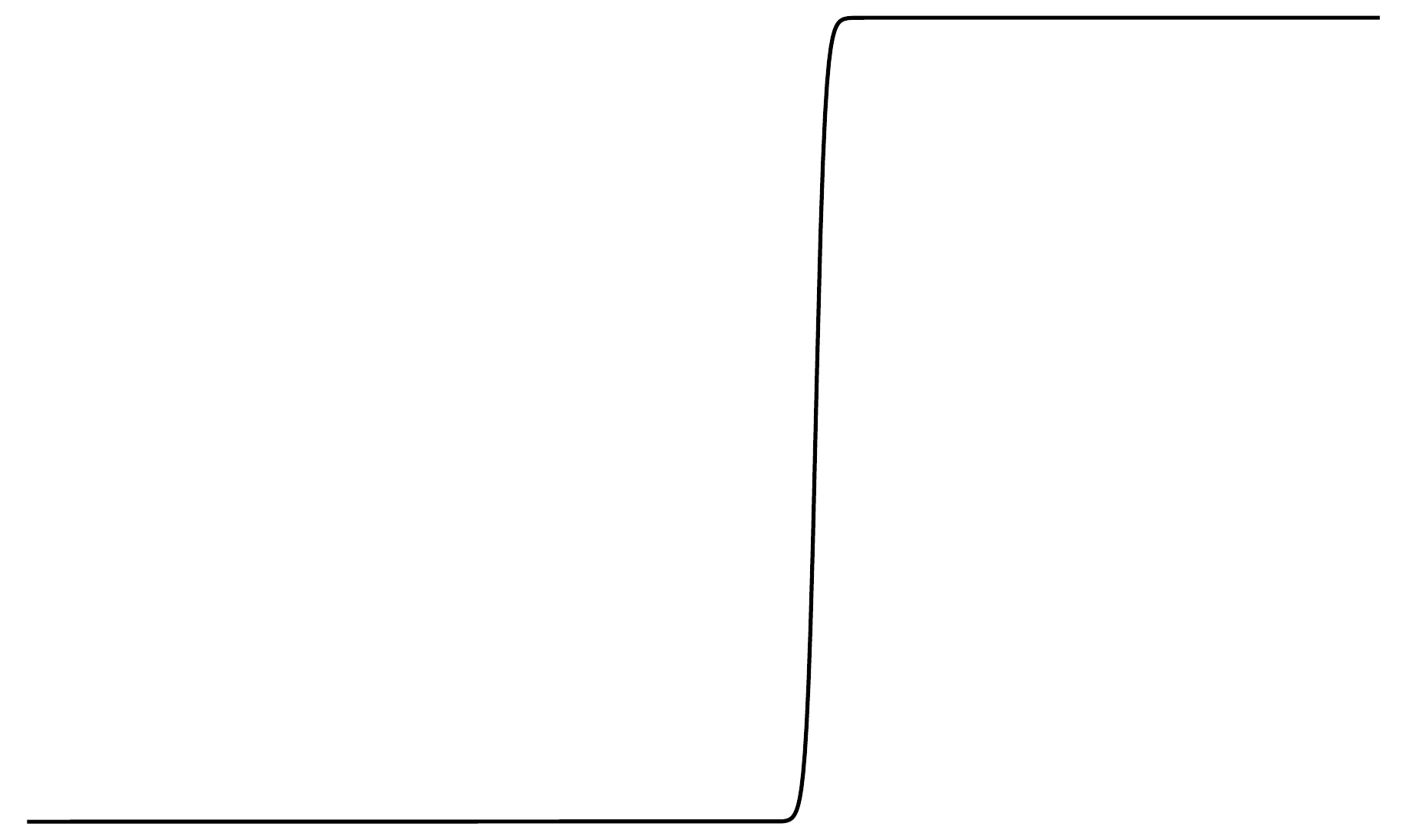}
        \caption{The smooth function $\rho$.}
        \label{fig:rho}
    \end{subfigure}
    \begin{subfigure}[b]{0.48\textwidth}
		\centering
        \includegraphics[width=\textwidth]{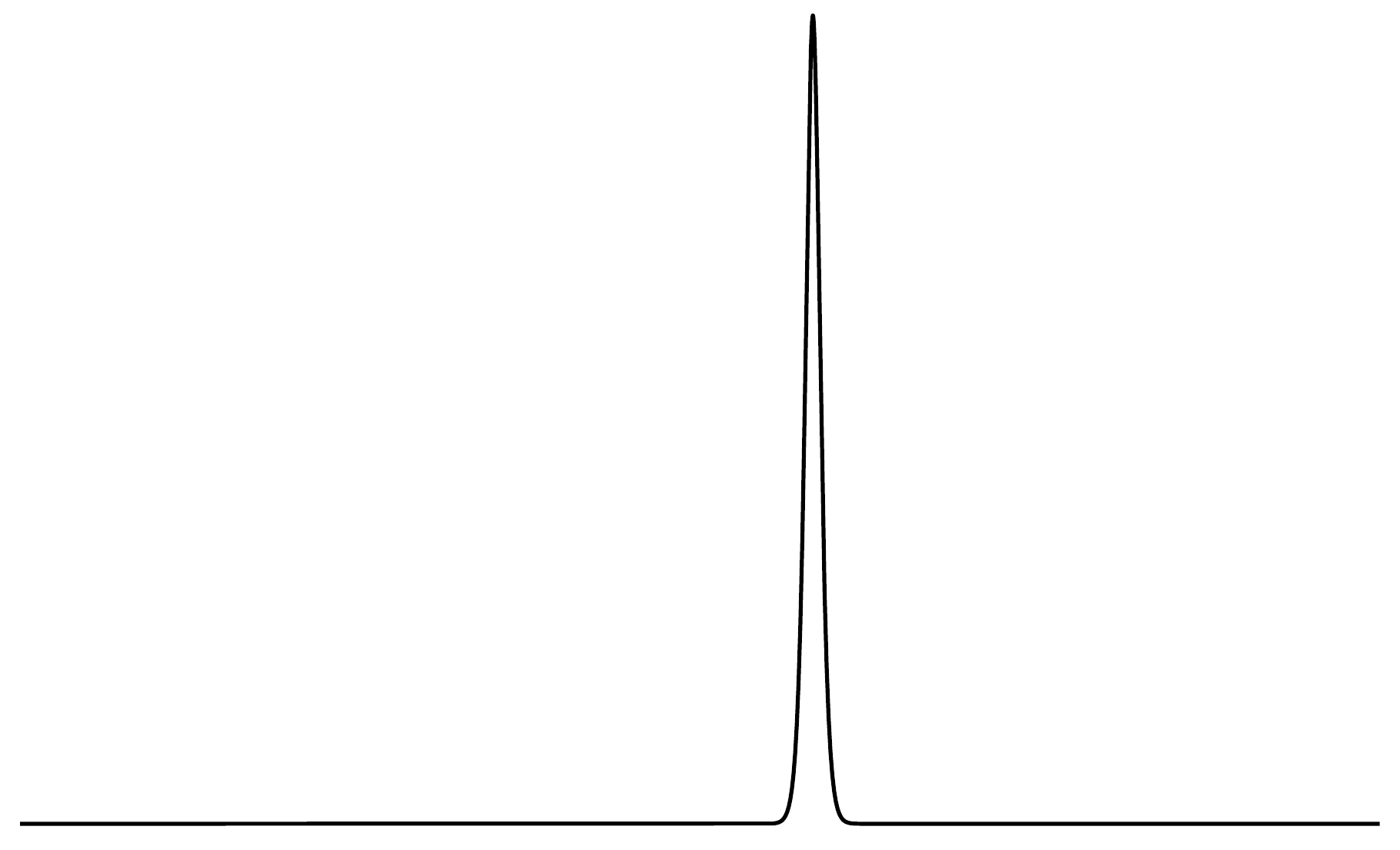}
		\caption{The derivative $\dot{\rho}$ of $\rho$.}
        \label{fig:rho_dot}
    \end{subfigure}
	\caption{}
\end{figure}

\begin{lemma}
	Let $\varphi \in \Aut(W,\lambda)$ and $\varphi_F \in \Ham(M,d\lambda)$ a Hamiltonian symplectomorphism. If $(\gamma,\tau) \in \Crit \mathscr{A}^{\mathfrak{M}}_\varphi$, then $x := \gamma\del[1]{\frac{1}{2}}$ is a twisted leaf-wise intersection point for $\varphi_F$.
	\label{lem:variational_characterisation_twisted_leaf-wise_intersections}
\end{lemma}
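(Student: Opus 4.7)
The plan is to read off from the critical point equations of Proposition \ref{prop:differential_perturbed_twisted_Rabinowitz_functional} that the twisted loop $\gamma$ decomposes into a Reeb piece on $\intcc[1]{0,\frac{1}{2}}$ and a Hamiltonian piece on $\intcc[1]{\frac{1}{2},1}$, then to exploit the twisted boundary condition $\gamma(1)=\varphi(\gamma(0))$ together with the fact that $\varphi$ restricts to a strict contactomorphism of $\Sigma$ to identify $\varphi_F(x)$ with a point on the Reeb leaf through $\varphi(x)$.

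Concretely, by Proposition \ref{prop:differential_perturbed_twisted_Rabinowitz_functional} applied to the perturbed twisted Rabinowitz action functional $\mathscr{A}^{\mathfrak{M}}_\varphi=\mathscr{A}^{(\chi H,F)}_\varphi$, the equation $d\mathscr{A}^{\mathfrak{M}}_\varphi\vert_{(\gamma,\tau)}=0$ is equivalent to
\begin{equation*}
	\dot{\gamma}(t)=\tau\chi(t)X_H(\gamma(t))+X_{F_t}(\gamma(t))\qquad\text{and}\qquad \int_0^1 \chi(t)H(\gamma(t))\,dt=0.
\end{equation*}
On $\intcc[1]{0,\frac{1}{2}}$ we have $F_t\equiv 0$, so $\gamma$ satisfies $\dot{\gamma}=\tau\chi(t)X_H(\gamma)$. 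Since $H$ is autonomous, $H\circ\gamma$ is constant on this interval, and because $\supp\chi\subseteq\intoo[1]{0,\frac{1}{2}}$ with $\int_0^1\chi=1$, the constraint reduces to $H(\gamma(0))=0$, so $\gamma(0)\in\Sigma$. Thus the flow of $\chi X_H$ on $\intcc[1]{0,\frac{1}{2}}$ coincides (using the reparametrization from Lemma \ref{lem:twisted_Moser_pair}) with the Reeb flow on $\Sigma$, and
\begin{equation*}
	x=\gamma\del[1]{\tfrac{1}{2}}=\phi^R_{\tau}(\gamma(0)).
\end{equation*}

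On $\intcc[1]{\frac{1}{2},1}$ we have $\chi\equiv 0$, so $\dot{\gamma}=X_{F_t}(\gamma)$, and since $F_t\equiv 0$ on $\intcc[1]{0,\frac{1}{2}}$ the time-dependent flow from time $0$ agrees with that from time $\frac{1}{2}$ on this interval. Hence $\gamma(1)=\phi^F_1(\gamma(\tfrac{1}{2}))=\varphi_F(x)$. Combining this with the twisted boundary condition $\gamma(1)=\varphi(\gamma(0))=\varphi(\phi^R_{-\tau}(x))$ yields
\begin{equation*}
	\varphi_F(x)=\varphi(\phi^R_{-\tau}(x))=\phi^R_{-\tau}(\varphi(x))\in L_{\varphi(x)},
\end{equation*}
where in the last step we use that $\varphi$ commutes with the Reeb flow. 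This commutativity is the only subtle point in the argument: it follows because $\supp f_\varphi\subseteq\Int W$ forces $\varphi^*\lambda=\lambda$ on a neighbourhood of $\Sigma$ together with $\varphi(\Sigma)=\Sigma$, so $\varphi\vert_\Sigma$ is a strict contactomorphism, and strict contactomorphisms commute with the Reeb flow as noted before Definition 2.9. The rest is bookkeeping of the support conditions built into the twisted Moser pair.
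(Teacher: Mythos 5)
Your argument is correct and follows essentially the same route as the paper: read off the critical-point equations from Proposition \ref{prop:differential_perturbed_twisted_Rabinowitz_functional}, use the support conditions in the twisted Moser pair to show $\gamma$ is a Reeb arc on $\bigl[0,\tfrac{1}{2}\bigr]$ with $\gamma(0)\in\Sigma$ and a Hamiltonian arc on $\bigl[\tfrac{1}{2},1\bigr]$ with $\gamma(1)=\varphi_F(x)$, then combine the twisted boundary condition $\gamma(1)=\varphi(\gamma(0))$ with the fact that $\varphi$ restricts to a strict contactomorphism commuting with the Reeb flow. The only cosmetic difference is that you exhibit the explicit formula $\varphi_F(x)=\phi^R_{-\tau}(\varphi(x))$, whereas the paper concludes via the set identity $L_{\varphi(x)}=\varphi(L_x)$; the content is identical.
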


\begin{proof}
	Let $\mathfrak{M} = (\chi H,F)$ denote the twisted Moser pair from Lemma \ref{lem:twisted_Moser_pair}. Using Proposition \ref{prop:differential_perturbed_twisted_Rabinowitz_functional} we compute 
	\begin{align*}
		\frac{d}{dt}H(\gamma(t)) &= dH(\dot{\gamma}(t))\\
		&= dH(\tau X_{\chi(t)H}(\gamma(t)) + X_{F_t}(\gamma(t)))\\
		&= dH(\tau \chi(t) X_H(\gamma(t)))\\
		&= \tau\chi(t)dH(X_H(\gamma(t)))\\
		&= 0
	\end{align*}
	\noindent for all $t \in \intcc[1]{0,\frac{1}{2}}$. Thus $H \circ \gamma = c \in \mathbb{R}$ on $\intcc[1]{0,\frac{1}{2}}$ with 
	\begin{equation*}
		0 = \int_0^1 \chi H(\gamma) = \int_0^{\frac{1}{2}}\chi H(\gamma) = c \int_0^{\frac{1}{2}}\chi = c\int_0^1\chi = c. 
	\end{equation*}
	Consequently, $\gamma(0) \in L_x$ and $x \in \Sigma$. Moreover, also $\gamma(1) = \varphi(\gamma(0)) \in \Sigma$ by the $\varphi$-invariance of $H$. For $t \in \intcc[1]{\frac{1}{2},1}$, $\dot{\gamma} = X_{F_t}(\gamma)$ and so $\gamma(1) = \varphi_F(x) \in \Sigma$. We conclude 
	\begin{equation*}
		L_{\varphi(x)} = \{\phi^R_t(\varphi(x)) : t \in \mathbb{R}\} = \{\varphi(\phi^R_t(x)) : t \in \mathbb{R}\} = \varphi(L_x)
	\end{equation*}
	\noindent and so $\varphi_F(x) = \gamma(1) = \varphi(\gamma(0)) \in L_{\varphi(x)}$.
\end{proof}

\begin{theorem}
	Let $(W,\lambda)$ be a Liouville domain with displaceable boundary in the completion $(M,\lambda)$ and $\varphi \in \Aut(W,\lambda)$. Then $\RFH^\varphi(\Sigma,M) \cong 0$.
	\label{thm:displaceable}
\end{theorem}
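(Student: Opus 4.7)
The plan is to adapt the Cieliebak--Frauenfelder vanishing argument for displaceable hypersurfaces to the twisted setting, using the leaf-wise intersection machinery set up above. Since $\Sigma$ is displaceable in $M$, we may fix a compactly supported Hamiltonian symplectomorphism $\varphi_F = \phi_1^F \in \Ham(M,d\lambda)$ with $\varphi_F(\Sigma)\cap\Sigma=\emptyset$. By Lemma \ref{lem:twisted_Moser_pair} we may assume $F$ has been replaced by the corresponding twisted Moser perturbation, so that $\mathfrak{M} = (\chi H, F)$ is a twisted Moser pair with $\varphi_{\tilde F} = \varphi_F$.

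The key observation is that the critical set of the perturbed twisted Rabinowitz action functional $\mathscr{A}^{\mathfrak{M}}_\varphi$ is empty. Indeed, if $(\gamma,\tau)\in\Crit\mathscr{A}^{\mathfrak{M}}_\varphi$, then Lemma \ref{lem:variational_characterisation_twisted_leaf-wise_intersections} produces a twisted leaf-wise intersection point $x=\gamma(\tfrac12)\in\Sigma$ with $\varphi_F(x)\in L_{\varphi(x)}\subseteq\Sigma$; but $\varphi_F(\Sigma)\cap\Sigma=\emptyset$ rules this out. Thus the perturbed Rabinowitz--Floer chain complex built from $\mathfrak{M}$ is zero, and hence its homology vanishes.

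To transfer this vanishing to $\RFH^\varphi(\Sigma,M)$, I would interpolate via the family of twisted Moser pairs $\mathfrak{M}_s := (\chi H, sF)$ for $s\in I$. At $s=0$ the functional $\mathscr{A}^{\mathfrak{M}_0}_\varphi$ agrees (up to the time reparametrisation from Lemma \ref{lem:twisted_Moser_pair}) with the unperturbed twisted Rabinowitz action functional $\mathscr{A}^H_\varphi$, and at $s=1$ the critical set is empty by the previous paragraph. A standard continuation argument then yields a chain map between the corresponding Floer complexes whose composition with the continuation back is chain-homotopic to the identity; since the target complex is zero, we conclude $\RFH^\varphi(\Sigma,M)\cong 0$. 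This is essentially the scheme of \cite[Section~2.2]{cieliebakfrauenfelder:rfh:2009}, and once the compactness machinery of Theorem \ref{thm:compactness} is in place the continuation maps are defined in the usual way by counting $s$-dependent negative gradient flow lines.

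The main obstacle is the uniform $L^\infty$-bound on the Lagrange multipliers $\tau$ along the continuation homotopy. In the untwisted case this is achieved by the displacement-energy estimate which bounds the action values of critical points of $\mathscr{A}^{\mathfrak{M}_s}$ in terms of the Hofer norm of $F$; the twisted Fundamental Lemma \ref{lem:fundamental_lemma} then feeds this into a uniform bound on $\tau$ along the $s$-parameter family. In our setting the only extra contribution to the action comes from the primitive term $f_\varphi(\gamma(0))$, which by the definition of a Liouville automorphism is bounded by $\|f_\varphi\|_\infty<+\infty$; this constant is absorbed into the Cieliebak--Frauenfelder estimate without essential change, exactly as in the proof of Lemma \ref{lem:fundamental_lemma}. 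With this bound in hand the compactness of Theorem \ref{thm:compactness} applies uniformly in $s\in I$, the continuation chain maps are well-defined, and the vanishing follows.
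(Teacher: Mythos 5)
Your proposal is correct and follows essentially the same route as the paper: exhibit a twisted Moser pair $\mathfrak{M}=(\chi H,F)$ whose perturbed functional has empty critical set (via Lemma \ref{lem:variational_characterisation_twisted_leaf-wise_intersections} and the displaceability $\varphi_F(\Sigma)\cap\Sigma=\emptyset$), then pass through continuation homomorphisms to the unperturbed weakly time-dependent functional $\mathscr{A}^{(\chi H,0)}_\varphi$, using a twisted version of the Albers--Frauenfelder/Cieliebak--Frauenfelder Lagrange-multiplier estimate to keep the interpolation compact. The one step the paper flags explicitly that you leave implicit is the final identification $\HF(\mathscr{A}^{(\chi H,0)}_\varphi)\cong\RFH^\varphi(\Sigma,M)$, which rests on the observation that the period--action equality $\mathscr{A}^{(\chi H,0)}_\varphi(\gamma,\tau)=\tau$ persists for weakly time-dependent $\chi H$; worth stating to make the deduction airtight.
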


\begin{proof}
	Suppose that $\Sigma = \partial W$ is displaceable in $M$ via $\varphi_F \in \Ham_c(M,d\lambda)$ and choose Rabinowitz--Floer data $(H,J)$ for $\varphi$. Denote by $\mathfrak{M} = (\chi H,F)$ the associated twisted Moser pair from Lemma \ref{lem:twisted_Moser_pair}. Then $\Crit \mathscr{A}^{\mathfrak{M}}_\varphi = \emptyset$. Indeed, if there exists $(\gamma,\tau) \in \Crit \mathscr{A}^{\mathfrak{M}}_\varphi$, then $\gamma\del[1]{\frac{1}{2}}$ is a twisted leaf-wise intersection point for $\varphi_F$ by Lemma \ref{lem:variational_characterisation_twisted_leaf-wise_intersections}. However, this is impossible as by displaceability we have that $\varphi_F(\Sigma) \cap \Sigma = \emptyset$. Consequently, the perturbed twisted Rabinowitz action functional $\mathscr{A}^{\mathfrak{M}}_\varphi$ is a Morse function. By adapting the Fundamental Lemma to the current setting as in \cite[Theorem~2.9]{albersfrauenfelder:rfh:2010}, the Floer homology $\HF(\mathscr{A}^{\mathfrak{M}}_\varphi)$ is well-defined. By making use of continuation homomorphisms we have that
	\begin{equation*}
		0 = \HF(\mathscr{A}^{\mathfrak{M}}_\varphi) \cong \HF(\mathscr{A}^{(\chi H,0)}_\varphi) \cong \RFH^\varphi(\Sigma,M),
	\end{equation*}
	\noindent where the last equation is the observation that twisted Rabinowitz--Floer homology in the autonomous case extends to the weakly time-dependent case without any issues. Crucial is, that the period--action equality (see Remark \ref{rem:period-action_equality})  is still valid. Indeed, we compute
	\begin{equation*}
		\mathscr{A}^{(\chi H,0)}_\varphi(\gamma,\tau) = \int_0^1 \gamma^*\lambda = \int_0^1 \lambda(\dot{\gamma}) = \tau \int_0^1 \chi \lambda(R(\gamma)) = \tau \int_0^1 \chi = \tau
	\end{equation*}
	\noindent for all $(\gamma,\tau) \in \Crit \mathscr{A}^{(\chi H,0)}_\varphi$.
\end{proof}

\section{Existence of Noncontractible Periodic Reeb Orbits}
We define an equivariant version of twisted Rabinowitz--Floer homology following \cite[p.~487]{albersfrauenfelder:eh:2012}. Denote by $(\mathbb{C}^n,\omega)$ the standard symplectic vector space with symplectic form
\begin{equation*}
	\omega := \sum_{j = 1}^n dy^j \wedge dx^j = \frac{i}{2}\sum_{j = 1}^n d\bar{z}^j \wedge dz^j,
\end{equation*}
\noindent and coordinates $z^j := x^j + iy^j$. Then $\omega = d\lambda$ for
\begin{equation}
	\lambda := \frac{1}{2}\sum_{j = 1}^n \del[1]{y^j dx^j - x^j dy^j} = \frac{i}{4} \sum_{j = 1}^n \del[1]{\bar{z}^j dz^j - z^j d\bar{z}^j}. 
	\label{eq:Liouville_form}
\end{equation}
Consider the free smooth discrete action on the odd-dimensional sphere 
	\begin{equation*}
		\mathbb{S}^{2n - 1} := \cbr[4]{(z^1,\dots,z^n) \in \mathbb{C}^n : \sum_{j = 1}^n\abs[0]{z^j}^2 = 1}
	\end{equation*}
	\noindent generated by the rotation
	\begin{equation*}
		\varphi \colon \mathbb{C}^n \to \mathbb{C}^n, \quad \varphi(z^1,\dots,z^n) := \del[1]{e^{2\pi i k_1/m}z^1,\dots,e^{2\pi i k_n/m}z^n}
	\end{equation*}
	\noindent for $m \geq 1$ and $k_1,\dots,k_n \in \mathbb{Z}$ coprime to $m$. Define a twisted defining Hamiltonian function $H \in \mathscr{F}_\varphi(\mathbb{S}^{2n - 1})$ by
\begin{equation*}
	H(z) := \frac{1}{2}\del[1]{\beta(\abs[0]{z}^2) - 1}
\end{equation*}
\noindent for some sufficiently small mollification of the piecewise linear function 
\begin{equation*}
	\beta(r) = \begin{cases}
		\frac{1}{2} & r \in \intoc[1]{-\infty,\frac{1}{2}},\\
		r & r \in \intcc[1]{\frac{1}{2},\frac{3}{2}},\\
		\frac{3}{2} & r \in \intco[1]{\frac{3}{2},+\infty}. 
	\end{cases}
\end{equation*}
Fix a $\varphi$-invariant $\omega$-compatible almost complex structure on $(\mathbb{C}^n,\omega)$. Then the rotation $\varphi$ induces a free $\mathbb{Z}_m$-action on $\Crit \mathscr{A}^H_\varphi$ and on the moduli space of twisted negative gradient flow lines with cascades of $\mathscr{A}^H_\varphi$. Therefore, we can define \bld{$\mathbb{Z}_m$-equivariant twisted Rabinowitz-Floer homology}
\begin{equation*}
	\overline{\RFH}^\varphi_k(\mathbb{S}^{2n - 1}/\mathbb{Z}_m) := \frac{\ker \bar{\partial}_k}{\im \bar{\partial}_{k + 1}} \qquad \forall k \in \mathbb{Z},
\end{equation*}
\noindent as the homology of the $\mathbb{Z}$-graded chain complex (see Remark \ref{rem:grading})
\begin{equation*}
	\bar{\partial}_k \colon \RFC^\varphi_k(\mathbb{S}^{2n - 1},\mathbb{C}^n)/\mathbb{Z}_m \to \RFC^\varphi_{k - 1}(\mathbb{S}^{2n - 1},\mathbb{C}^n)/\mathbb{Z}_m
\end{equation*}
\noindent given by
\begin{equation*}
	\bar{\partial}_k[(\gamma,\tau)] := [\partial_k(\gamma,\tau)] \qquad (\gamma,\tau) \in \Crit h,
\end{equation*}
\noindent for some $\varphi$-invariant Morse function $h$ on $\Crit \mathscr{A}^H_\varphi$.

\begin{theorem}
	Let $n \geq 2$. For $m \geq 1$ consider the rotation
	\begin{equation*}
		\varphi \colon \mathbb{C}^n \to \mathbb{C}^n, \quad \varphi(z^1,\dots,z^n) := \del[1]{e^{2\pi i k_1/m}z^1,\dots,e^{2\pi i k_n/m}z^n}
	\end{equation*}
	\noindent for $k_1,\dots,k_n \in \mathbb{Z}$ coprime to $m$. Then
	\begin{equation*}
		\overline{\RFH}^\varphi_k(\mathbb{S}^{2n - 1}/\mathbb{Z}_m) \cong \begin{cases}
			\mathbb{Z}_2 & m \text{ even},\\
			0 & m \text{ odd},
		\end{cases} \qquad \forall k \in \mathbb{Z},
	\end{equation*}
	If $m$ is even, then $\overline{\RFH}^\varphi_k(\mathbb{S}^{2n - 1}/\mathbb{Z}_m)$ is generated by a noncontractible periodic Reeb orbit in the lens space $\mathbb{S}^{2n - 1}/\mathbb{Z}_m$ for all $k \in \mathbb{Z}$. 
	\label{thm:equivariant_twisted_rfh}
\end{theorem}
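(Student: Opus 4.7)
The plan is to identify the Morse--Bott critical set explicitly, handle the odd case via displaceability plus a transfer argument, and compute the even case by exhibiting $\RFC^\varphi$ as a Tate complete resolution of the trivial $\mathbb{Z}_2[\mathbb{Z}_m]$-module $\mathbb{Z}_2$. From the Liouville form~\eqref{eq:Liouville_form} one finds $R(z) = -2iz$, hence $\phi^R_t(z) = e^{-2it}z$, and Proposition~\ref{prop:kernel_hessian_contact} identifies the critical points of $\mathscr{A}^H_\varphi$ with twisted Reeb orbits $\gamma(t) = e^{-2i\tau t}\gamma(0)$ satisfying $e^{-2i\tau}\gamma(0) = \varphi(\gamma(0))$, i.e.\ $e^{-2i\tau} = e^{2\pi i k_j/m}$ at each coordinate $j$ with $\gamma(0)^j \neq 0$. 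Organising the solutions decomposes $\Crit\mathscr{A}^H_\varphi$ into a countable disjoint union of Morse--Bott components, each a unit sub-sphere of a coordinate subspace of $\mathbb{C}^n$; the Morse--Bott condition follows from the Hessian-kernel formula of Proposition~\ref{prop:kernel_hessian_contact}. Because $\gcd(k_j,m) = 1$ and $m \geq 2$, the rotation $\varphi$ has no fixed points on $\mathbb{S}^{2n-1}$, so $\mathbb{Z}_m$ acts freely on every critical component and each quotient is a lens space.

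The sphere $\mathbb{S}^{2n-1}$ is displaceable in $\mathbb{C}^n$ by a compactly supported Hamiltonian symplectomorphism (a cutoff of a large translation), so Theorem~\ref{thm:displaceable} yields $\RFH^\varphi(\mathbb{S}^{2n-1},\mathbb{C}^n) \cong 0$. Choosing a $\mathbb{Z}_m$-invariant Morse function $h$ on $\Crit\mathscr{A}^H_\varphi$, the complex $\RFC^\varphi$ becomes free as a $\mathbb{Z}_2[\mathbb{Z}_m]$-module with $\mathbb{Z}_m$-equivariant differential and vanishing homology. For odd $m$, $m$ is a unit in $\mathbb{Z}_2$; hence the averaging operator $\tfrac{1}{m}\sum_{g \in \mathbb{Z}_m} g_\ast$ is a chain-level retract of the projection onto the coinvariants $\overline{\RFC}^\varphi$, and $\overline{\RFH}^\varphi_\ast$ embeds as a $\mathbb{Z}_2$-linear summand of $\RFH^\varphi_\ast = 0$; thus $\overline{\RFH}^\varphi_\ast \cong 0$.

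For even $m$ the averaging argument fails; instead choose $h$ so that its restriction to each critical sub-sphere descends to a perfect Morse function on the lens-space quotient, which exists because every $\mathbb{Z}_2$-Betti number of this quotient equals $1$ when $m$ is even. A transverse Conley--Zehnder index computation using the trivialisation $\Phi_t = e^{2\pi i t/m}\Phi_0$ of the contact distribution along the orbit (in the sense of Definition~\ref{def:CZ-index}) then shows that the resulting Morse critical points place precisely one $\mathbb{Z}_2[\mathbb{Z}_m]$-free generator of $\RFC^\varphi$ in each integer $\mathbb{Z}$-degree. Combined with the acyclicity $\RFH^\varphi = 0$ from the previous paragraph, this exhibits $\RFC^\varphi$ as a Tate complete resolution of the trivial $\mathbb{Z}_2[\mathbb{Z}_m]$-module $\mathbb{Z}_2$, so its coinvariants compute Tate cohomology:
\[
    \overline{\RFH}^\varphi_k(\mathbb{S}^{2n-1}/\mathbb{Z}_m) \;\cong\; \hat{H}^{-k}(\mathbb{Z}_m;\mathbb{Z}_2) \;\cong\; \mathbb{Z}_2 \qquad \forall k \in \mathbb{Z},
\]
where the last isomorphism holds because the norm $N = \sum_{g \in \mathbb{Z}_m} g$ acts on the trivial module $\mathbb{Z}_2$ by multiplication by $m \equiv 0 \pmod 2$. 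The generator of $\overline{\RFH}^\varphi_0$ is the Morse minimum on the critical component of the shortest positive twisted period; its orbit is nonconstant and descends via Lemma~\ref{lem:noncontractible} to a noncontractible loop in $\mathbb{S}^{2n-1}/\mathbb{Z}_m$.

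The main obstacle is verifying the complete-resolution structure in the even case: one needs to check that the transverse Conley--Zehnder index spacings between consecutive Morse--Bott components precisely fill the gaps left by the internal Morse grading, so that $\RFC^\varphi$ has rank one over $\mathbb{Z}_2[\mathbb{Z}_m]$ in every integer degree. A cleaner alternative, bypassing this explicit index bookkeeping, is to identify $\overline{\RFH}^\varphi$ with the positive $\mathbb{Z}_m$-equivariant $\bigvee$-shaped symplectic homology of the ball $B^{2n}$ and to invoke the standard computation of the latter, whose $\mathbb{Z}_2$-coefficient version already encodes the parity asymmetry of $\hat{H}^\ast(\mathbb{Z}_m;\mathbb{Z}_2)$.
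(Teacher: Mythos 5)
Your overall strategy mirrors the paper's — compute the critical manifold explicitly, invoke displaceability to get $\RFH^\varphi(\mathbb{S}^{2n-1},\mathbb{C}^n)=0$, then descend to the $\mathbb{Z}_m$-quotient complex — but you replace the paper's explicit computation of the differential matrices with homological-algebra machinery. Your odd-$m$ argument is genuinely nicer than what the paper does: since $m$ is a unit in $\mathbb{Z}_2$, the transfer $\overline{\RFC}^\varphi\to\RFC^\varphi$, $[x]\mapsto\sum_{g}gx$, composed with the projection is $m\cdot\id$, so $\overline{\RFH}^\varphi$ is a $\mathbb{Z}_2$-summand of $\RFH^\varphi=0$. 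This is clean, conceptual, and avoids the paper's explicit "alternating complex" computation.

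The even-$m$ argument, however, has a real gap where you invoke the Tate resolution. Acyclicity of $\RFC^\varphi$ together with the statement that it is free of rank one over $\mathbb{Z}_2[\mathbb{Z}_m]$ in every integer degree does \emph{not} imply that it is a complete resolution of the trivial module $\mathbb{Z}_2$, and in particular does not determine the homology of its coinvariants. The coinvariant differentials are the augmentation images $\varepsilon(a_k)\in\mathbb{Z}_2$ of the elements $a_k\in\mathbb{Z}_2[\mathbb{Z}_m]$ representing the $\mathbb{Z}_m$-equivariant boundary maps, and different acyclic rank-one complexes can have different augmentation patterns. For instance, over $\mathbb{Z}_2[\mathbb{Z}_4]$ the complex with $a_k$ alternating between a unit and $(T-1)^4=0$ is acyclic and free of rank one, yet its coinvariant complex alternates $\cdots\xrightarrow{1}\mathbb{Z}_2\xrightarrow{0}\mathbb{Z}_2\xrightarrow{1}\cdots$ and has \emph{vanishing} homology. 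So to conclude $\overline{\RFH}^\varphi_k\cong\mathbb{Z}_2$ you must actually identify the differentials — at least show $\varepsilon(a_k)=0$ for all $k$, i.e.\ that the descended differential in the Morse–Bott cascade complex of the lens-space quotient is zero in every degree. This is exactly the content of the paper's explicit computation: it exhibits the boundary matrices as the all-ones matrix $\mathbbm{1}$ (the norm $N$) and a circulant $A=I+T$, both of which augment to $0$ when $m$ is even, yielding the acyclic "rope ladder." You flag this step as "the main obstacle" and gesture at a Conley–Zehnder index bookkeeping or, alternatively, a comparison with equivariant $\check{\SH}$, but neither is carried out, so the even case is not established. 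Note also that you never separate the special case $k_1=\dots=k_n=1$ (where $\Crit\mathscr{A}^H_\varphi\cong\mathbb{S}^{2n-1}\times\mathbb{Z}$) from the general case (where the critical set is a union of sub-spheres of varying dimension), whereas the paper treats the special case explicitly and then transfers to the general case via the displaceability isomorphism; your rank-one-free-in-every-degree claim already fails on the nose in the general case without that reduction.
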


\begin{proof}
	First we consider the special case
	\begin{equation*}
		\varphi \colon \mathbb{C}^n \to \mathbb{C}^n, \qquad \varphi(z) = e^{2\pi i/m}z.
	\end{equation*}
	The hypersurface $\mathbb{S}^{2n - 1} \subseteq (\mathbb{C}^n,d\lambda)$ is of restricted contact type with contact form $\lambda\vert_{\mathbb{S}^{2n - 1}}$ and associated Reeb vector field
\begin{equation*}
	R = 2\del[3]{y^j \frac{\partial}{\partial x^j} - x^j\frac{\partial}{\partial y^j}}\bigg\vert_{\mathbb{S}^{2n - 1}}= 2i\del[3]{\bar{z}\frac{\partial}{\partial \bar{z}} - z\frac{\partial}{\partial z}}\bigg\vert_{\mathbb{S}^{2n - 1}}.
\end{equation*}
Suppose $(\gamma,\tau) \in \Crit \mathscr{A}^H_\varphi$. If $\tau = 0$, then $\gamma$ is constant. This cannot happen as $\Fix(\varphi\vert_{\mathbb{S}^{2n - 1}}) = \emptyset$. So assume $\tau \neq 0$. Define a reparametrisation
	\begin{equation*}
		\gamma_\tau \colon \mathbb{R} \to \mathbb{S}^{2n - 1}, \qquad \gamma_\tau(t) := \gamma(t/\tau).
	\end{equation*}
	Then $\gamma_\tau$ is the unique integral curve of $R$ starting at $z := \gamma(0)$ and thus
	\begin{equation*}
		\gamma_\tau(t) = e^{-2it}z \qquad \forall t \in \mathbb{R}.
	\end{equation*}
	From $\gamma(t) = \gamma_\tau(\tau t)$ and the requirement
	\begin{equation*}
		e^{-2i\tau}z = \gamma(1) = \varphi(\gamma(0)) = \varphi(z) = e^{2\pi i/m}z,
	\end{equation*}
	\noindent we conclude $\tau \in \frac{\pi}{m}(m\mathbb{Z} - 1)$. Hence
	\begin{equation*}
		\Crit \mathscr{A}^H_\varphi = \cbr[1]{\del[1]{\phi^{\tau_k R}(z),\tau_k} : k \in \mathbb{Z}, z \in \mathbb{S}^{2n - 1}} \cong \mathbb{S}^{2n - 1} \times \mathbb{Z},
	\end{equation*}
	\noindent for any $H \in \mathscr{F}_\varphi(\mathbb{S}^{2n - 1})$, where
	\begin{equation*}
		\tau_k := \frac{\pi}{m}(mk - 1).
	\end{equation*}

	By Proposition \ref{prop:kernel_hessian_contact}, $(z_0,\eta) \in T_z\mathbb{S}^{2n - 1} \times \mathbb{R}$ belongs to the kernel of the Hessian at $(z,k) \in \Crit \mathscr{A}^H_\varphi$ if and only if $\eta = 0$ and
	\begin{equation*}
		z_0 \in \ker \del[1]{D(\varphi \circ \phi^R_{-\tau_k})\vert_z - \id_{T_z\mathbb{S}^{2n - 1}}}.
	\end{equation*}
	A direct computation yields $D(\varphi \circ \phi^R_{-\tau_k})\vert_z = \id_{T_z \mathbb{S}^{2n - 1}}$ and thus
	\begin{equation*}
		T_{(z,k)}\Crit \mathscr{A}^H_\varphi = T_z \mathbb{S}^{2n - 1} \times \cbr[0]{0} \cong \ker \Hess \mathscr{A}^H_\varphi \vert_{(z,k)}.
	\end{equation*}
	So the twisted Rabinowitz action functional $\mathscr{A}^H_\varphi$ is Morse--Bott with spheres.

	The full Conley--Zehnder index \cite[Definition~10.4.1]{frauenfelderkoert:3bp:2018} gives rise to a locally constant function
	\begin{equation*}
		\hat{\mu}_{\mathrm{CZ}} \colon \Crit \mathscr{A}^H_\varphi \to \mathbb{Z}, \qquad \hat{\mu}(z,k) = (2k-1)n.
	\end{equation*}
	Note that the definition of the Conley--Zehnder index also applies in this degenerate case, compare \cite[Remark~10.4.2]{frauenfelderkoert:3bp:2018}. By the adapted proof of the Hofer--Wysocki--Zehnder Theorem \cite[Theorem~12.2.1]{frauenfelderkoert:3bp:2018} to the $n$-dimensional setting, the full Conley--Zehnder index coincides with the transverse Conley--Zehnder index $\mu_{\CZ}$. Indeed, for $(\gamma,\tau) \in \Crit \mathscr{A}^H_\varphi$ define a smooth path
	\begin{equation*}
		\Psi \colon I \to \Sp(n), \qquad \Psi_t := D\phi^H_{\tau t}\vert_{\gamma(0)} \colon \mathbb{C}^n \to \mathbb{C}^n.
	\end{equation*}
	Adapting the proof of \cite[Lemma~12.2.3~(iii)]{frauenfelderkoert:3bp:2018}, we get that
	\begin{equation*}
		\Psi_1(R(\gamma(0))) = R(\gamma(1)) \qquad \text{and} \qquad \Psi_1(\gamma(0)) = \gamma(1).
	\end{equation*}
	Arguing as in \cite[p.~235--236]{frauenfelderkoert:3bp:2018} we conclude
	\begin{equation*}
		\mu_{\CZ}(\gamma,\tau) = \hat{\mu}_{\CZ}(\gamma,\tau).
	\end{equation*}
	
	Fix $z_0 \in \mathbb{S}^{2n - 1}$ and define $\eta := \phi^{\tau_0 R}(z_0)$. Note that $\phi^{\tau_k R}(z)$ belongs to the same equivalence class in $\pi_0 \mathscr{L}_\varphi \mathbb{S}^{2n - 1}$ as $\eta$ for all $z \in \mathbb{S}^{2n - 1}$ and $k \in \mathbb{Z}$ because $\mathbb{S}^{2n - 1}$ is simply connected for $n\geq 2$. Let $h \in C^\infty(\mathbb{S}^{2n - 1})$ be the standard height function. By Remark \ref{rem:grading}, $\RFH^\varphi(\mathbb{S}^{2n - 1},\mathbb{C}^n)$ carries the $\mathbb{Z}$-grading
	\begin{equation*}
		\mu((z,k),(z_0,0)) + \ind_h(z) = 2kn + \ind_h(z) \qquad \forall (z,k) \in \mathbb{S}^{2n - 1} \times \mathbb{Z}.
	\end{equation*}
	We claim that the number of twisted negative gradient flow lines between the minimum of $\mathbb{S}^{2n - 1} \times \{k + 1\}$ and the maximum of $\mathbb{S}^{2n - 1} \times \{k\}$ must be odd, so that the critical manifold $\Crit \mathscr{A}^H_\varphi$ looks like a string of pearls, see Figure \ref{fig:equivariant_rfh_1}. Indeed, if there is an even number of such negative gradient flow lines, then $\RFH^\varphi(\mathbb{S}^{2n - 1},\mathbb{C}^n) \neq 0$, contradicting Theorem \ref{thm:displaceable} as $\mathbb{S}^{2n - 1}$ is displaceable in the completion $\mathbb{C}^n$.

\begin{figure}[h!tb]
	\centering
	\includegraphics[width=\textwidth]{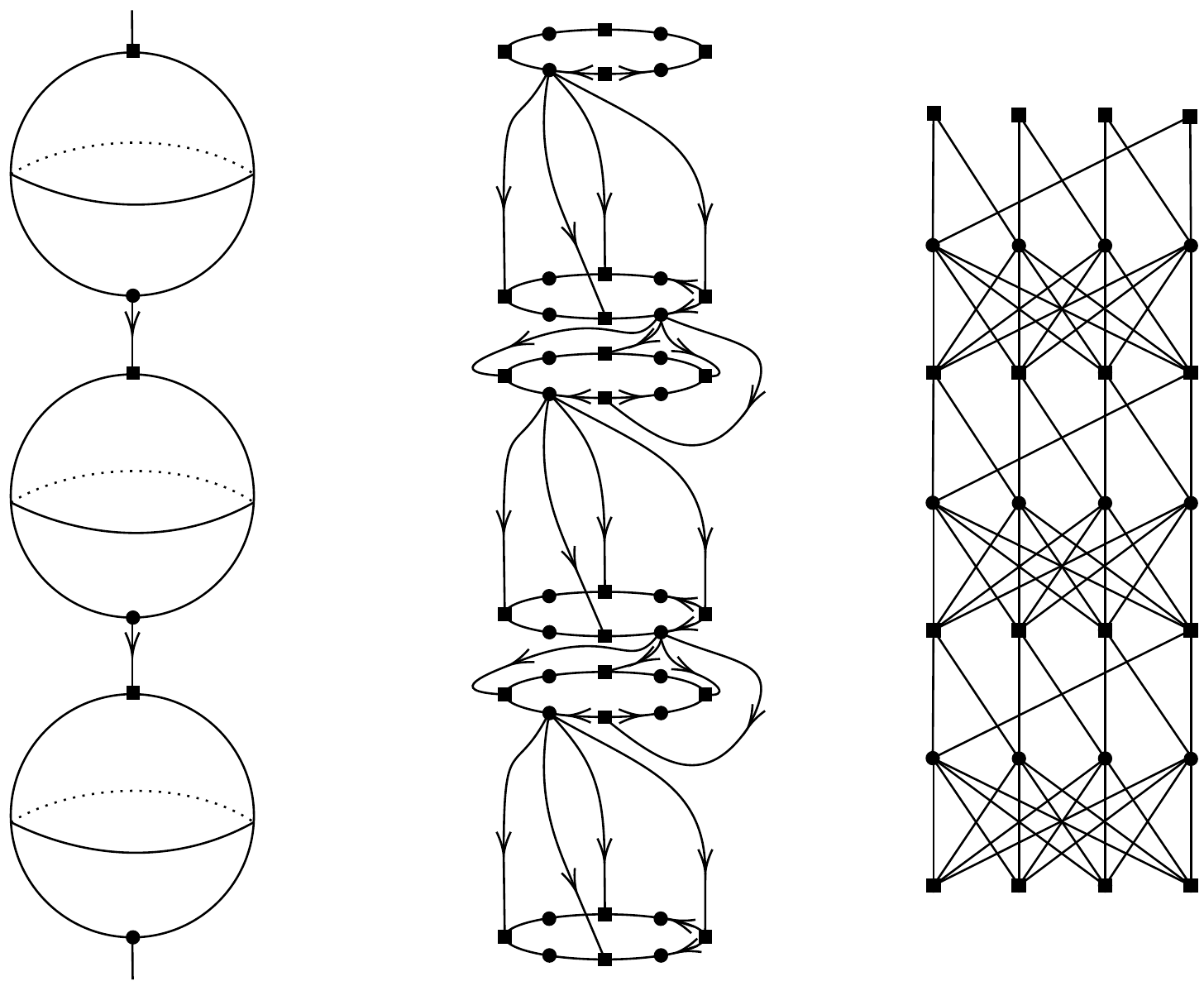}
	\caption{The critical manifold $\mathbb{S}^{2n - 1} \times \mathbb{Z}$ with the standard height function, the Morse--Bott function $f$ and the resulting chain complex.}
	\label{fig:equivariant_rfh_1}
\end{figure}

	To compute the $\mathbb{Z}_m$-equivariant twisted Rabinowitz--Floer homology, choose the additional $\mathbb{Z}_m$-invariant Morse--Bott function
	\begin{equation*}
		f \colon \mathbb{S}^{2n - 1} \to \mathbb{R}, \qquad f(z^1,\dots,z^n) := \sum_{j = 1}^n j\abs[0]{z^j}^2
	\end{equation*}
	\noindent on each component of $\Crit \mathscr{A}^H_\varphi$. It is easy to check that $f$ is Morse--Bott with circles. Additionaly, choose a $\mathbb{Z}_m$-invariant Morse function on $\Crit f$. 

	\noindent For example, one can take
	\begin{equation*}
		h \colon \mathbb{S}^1 \to \mathbb{R}, \qquad h(t) := \cos(2\pi m t).
	\end{equation*}
	The resulting chain complex is given by
	\begin{equation*}
		\begin{tikzcd}[column sep = scriptsize]
			\dots \arrow[r] & \mathbb{Z}_2^m \arrow[r,"\mathbbm{1}"] & \mathbb{Z}^m_2 \arrow[r,"A"] & \mathbb{Z}^m_2 \arrow[r,"\mathbbm{1}"] & \mathbb{Z}^m_2 \arrow[r,"A"] & \mathbb{Z}^m_2 \arrow[r,"\mathbbm{1}"] & \mathbb{Z}^m_2 \arrow[r] & \dots 
		\end{tikzcd}
	\end{equation*}
	\noindent where $\mathbbm{1} \in M_{m \times m}(\mathbb{Z}_2)$ has every entry equal to $1$ and $A \in M_{m \times m}(\mathbb{Z}_2)$ is defined by
	\begin{equation*}
		A := I_{m \times m} + \sum_{j = 1}^{m - 1} e_{(j + 1)j} + e_{1 m},
	\end{equation*}
	\noindent where $e_{ij} \in M_{m \times m}(\mathbb{Z}_2)$ satisfies $(e_{ij})_{kl} = \delta_{ik}\delta_{jl}$. Thus the resulting chain complex looks like a rope ladder. Compare Figure \ref{fig:equivariant_rfh_1}.

	Passing to the quotient via the free $\mathbb{Z}_m$-action, we get the acyclic chain complex
	\begin{equation*}
		\begin{tikzcd}
			\dots \arrow[r] & \mathbb{Z}_2 \arrow[r,"0"] & \mathbb{Z}_2 \arrow[r,"0"] & \mathbb{Z}_2 \arrow[r,"0"] & \mathbb{Z}_2 \arrow[r] & \dots 
		\end{tikzcd}
	\end{equation*}
	\noindent if $m$ is even and the alternating chain complex 
	\begin{equation*}
		\begin{tikzcd}[column sep = scriptsize]
			\dots \arrow[r] & \mathbb{Z}_2 \arrow[r,"1"] & \mathbb{Z}_2 \arrow[r,"0"] & \mathbb{Z}_2 \arrow[r,"1"] & \mathbb{Z}_2 \arrow[r,"0"] & \mathbb{Z}_2 \arrow[r,"1"] & \mathbb{Z}_2 \arrow[r] & \dots 
		\end{tikzcd}
	\end{equation*}
	\noindent if $m$ is odd. From this the statement follows.

\begin{figure}[h!tb]
	\centering
	\includegraphics[width=.45\textwidth]{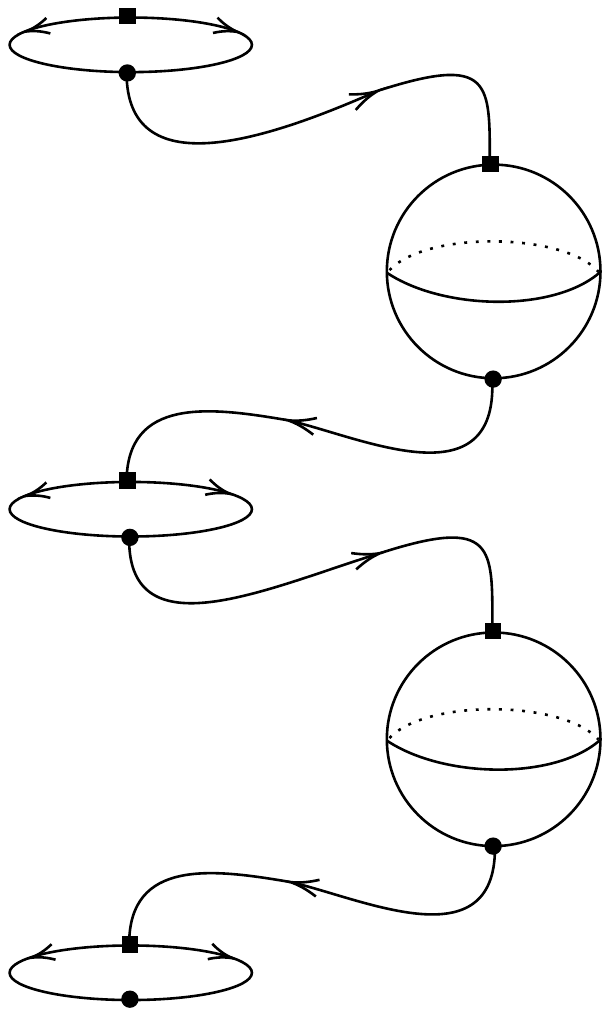}
	\caption{The critical manifold $\Crit \mathscr{A}^H_\varphi$ together with the standard height function.}
	\label{fig:equivariant_rfh_2}
\end{figure}

	For the general case, we note that $\Crit \mathscr{A}^H_\varphi$ is a disjoint union of spheres and different copies of $\mathbb{Z}$. It is therefore rather hard to compute the equivariant twisted Rabinowitz--Floer homology directly via analysing the critical manifold. By Theorem \ref{thm:displaceable}, there is a canonical isomorphism
	\begin{equation*}
		\RFH^\varphi_*(\mathbb{S}^{2n - 1},\mathbb{C}^n) \cong \RFH_*(\mathbb{S}^{2n - 1},\mathbb{C}^n)
	\end{equation*}
	\noindent inducing a canonical isomorphism
	\begin{equation*}
		\overline{\RFH}^\varphi_*(\mathbb{S}^{2n - 1}/\mathbb{Z}_m) \cong \RFH^{\mathbb{Z}_m}_*(\mathbb{S}^{2n - 1},\mathbb{C}^n),
	\end{equation*}
	\noindent where $\RFH^{\mathbb{Z}_m}_*(\mathbb{S}^{2n - 1},\mathbb{C}^n)$ denotes the $\mathbb{Z}_m$-equivariant Rabinowitz--Floer homology constructed in \cite[p.~487]{albersfrauenfelder:eh:2012}. A computation similar as before shows
	\begin{equation*}
		\RFH^{\mathbb{Z}_m}_k(\mathbb{S}^{2n - 1},\mathbb{C}^n) \cong \begin{cases}
			\mathbb{Z}_2 & m \text{ even},\\
			0 & m \text{ odd},
		\end{cases} \qquad \forall k \in \mathbb{Z}.
	\end{equation*}
	The crucial observation is, that $\Crit \mathscr{A}^H \cong \mathbb{S}^{2n - 1} \times \mathbb{Z}$. In particular, the string of pearl looks like in Figure \ref{fig:equivariant_rfh_2}.

	Finally, $\overline{\RFH}^\varphi_k(\mathbb{S}^{2n - 1}/\mathbb{Z}_m)$ is generated by a noncontractible periodic Reeb orbit in $\mathbb{S}^{2n - 1}/\mathbb{Z}_m$ for all $k \in \mathbb{Z}$ by Lemma \ref{lem:noncontractible}.
\end{proof}

\begin{remark}[Coefficients]
	As $\overline{\RFH}^\varphi_*(\mathbb{S}^{2n - 1}/\mathbb{Z}_m)$ vanishes for odd $m$, one should rather consider twisted Rabinowitz--Floer homology with coefficients in $\mathbb{Z}$ in that case.
\end{remark}

For an immediate algebraic corollary recall the definition of Tate cohomology \cite[Definition~6.2.4]{weibel:homological_algebra:1994} and Tate homology \cite[p.~135]{brown:groups:1982}.

\begin{corollary}[Tate Homology]
	Let $C_m$ denote the cyclic group of order $m \geq 1$. Then for the trivial left $C_m$-module $\mathbb{Z}_2$ we have that
	\begin{equation*}
		\overline{\RFH}^\varphi_k(\mathbb{S}^{2n - 1}/\mathbb{Z}_m) \cong \hat{\operatorname{H}}_k(C_m;\mathbb{Z}_2) \qquad \forall k \in \mathbb{Z},
	\end{equation*}
	\noindent where $\hat{\operatorname{H}}_*(C_m;\mathbb{Z}_2)$ denotes the Tate homology group of $C_m$ with coefficients in the trivial left $C_m$-module $\mathbb{Z}_2$.
\end{corollary}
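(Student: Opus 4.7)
The plan is to reduce the corollary to a direct comparison between the explicit computation already carried out in Theorem \ref{thm:equivariant_twisted_rfh} and the classical computation of Tate (co)homology for a cyclic group. Since both sides are two-periodic $\mathbb{Z}_2$-vector spaces indexed by $k \in \mathbb{Z}$, it suffices to identify the value at each degree and confirm that the two parity-dependent patterns agree.

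First I would recall the standard complete resolution of the trivial $C_m$-module $\mathbb{Z}$, namely the two-periodic complex
\begin{equation*}
    \cdots \to \mathbb{Z}[C_m] \xrightarrow{t - 1} \mathbb{Z}[C_m] \xrightarrow{N} \mathbb{Z}[C_m] \xrightarrow{t - 1} \mathbb{Z}[C_m] \to \cdots,
\end{equation*}
where $t$ is a generator and $N := 1 + t + \cdots + t^{m - 1}$ is the norm element. Tensoring with the trivial left $C_m$-module $\mathbb{Z}_2$ over $\mathbb{Z}[C_m]$ collapses the action of $t$ to the identity, so $t - 1$ becomes the zero map and $N$ becomes multiplication by $m$. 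This yields the two-periodic chain complex
\begin{equation*}
    \cdots \to \mathbb{Z}_2 \xrightarrow{0} \mathbb{Z}_2 \xrightarrow{m \bmod 2} \mathbb{Z}_2 \xrightarrow{0} \mathbb{Z}_2 \to \cdots
\end{equation*}
computing $\hat{\operatorname{H}}_*(C_m;\mathbb{Z}_2)$.

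Next I would read off the homology in the two parities. If $m$ is even, then $m \equiv 0 \pmod 2$, every differential vanishes, and $\hat{\operatorname{H}}_k(C_m;\mathbb{Z}_2) \cong \mathbb{Z}_2$ for all $k \in \mathbb{Z}$. If $m$ is odd, then $m \equiv 1 \pmod 2$, the multiplication-by-$m$ map is an isomorphism, the complex is acyclic, and $\hat{\operatorname{H}}_k(C_m;\mathbb{Z}_2) \cong 0$ for all $k \in \mathbb{Z}$. Comparing with Theorem \ref{thm:equivariant_twisted_rfh} gives the claimed isomorphism degree by degree.

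There is essentially no analytic obstacle here; the corollary is a purely algebraic reinterpretation of Theorem \ref{thm:equivariant_twisted_rfh}. The only delicate point is bookkeeping around the convention relating Tate homology and Tate cohomology (the standard identification $\hat{\operatorname{H}}_n(G;A) \cong \hat{\operatorname{H}}^{-n - 1}(G;A)$), but since both theories in question are two-periodic, any such degree shift is absorbed into the $\mathbb{Z}$-indexing and does not affect the statement.
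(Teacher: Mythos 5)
Your proposal is correct and matches the intent of the paper, which presents the corollary as an immediate consequence of Theorem \ref{thm:equivariant_twisted_rfh} with no separate argument. The paper leaves the algebraic verification implicit: one recalls the two-periodic complete resolution of $C_m$, tensors with the trivial module $\mathbb{Z}_2$ so that $t - 1 \mapsto 0$ and $N \mapsto m \bmod 2$, and reads off that $\hat{\operatorname{H}}_k(C_m;\mathbb{Z}_2)$ equals $\mathbb{Z}_2$ in every degree when $m$ is even and vanishes when $m$ is odd --- exactly the values computed in Theorem \ref{thm:equivariant_twisted_rfh}. Your observation that the homology/cohomology degree shift is immaterial here because of two-periodicity and degree-independence of the answer is the right sanity check; there is nothing further to verify.
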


Using Theorem \ref{thm:equivariant_twisted_rfh} we can prove Theorem \ref{thm:noncontractible}.

\begin{theorem}
	Let $\Sigma \subseteq \mathbb{C}^n$, $n \geq 2$, be a compact and connected star-shaped hypersurface invariant under the rotation	
	\begin{equation*}
		\varphi \colon \mathbb{C}^n \to \mathbb{C}^n, \quad \varphi(z^1,\dots,z^n) := \del[1]{e^{2\pi i k_1/m}z^1,\dots,e^{2\pi i k_n/m}z^n}
	\end{equation*}
	\noindent for some even $m \geq 2$ and $k_1,\dots,k_n \in \mathbb{Z}$ coprime to $m$. Then $\Sigma/\mathbb{Z}_m$ admits a noncontractible periodic Reeb orbit.	
\end{theorem}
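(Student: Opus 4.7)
The plan is to reduce the statement to the already computed case $\Sigma = \mathbb{S}^{2n - 1}$ via the invariance property of twisted Rabinowitz--Floer homology, and then use the nonvanishing of the $\mathbb{Z}_m$-equivariant twisted Rabinowitz--Floer homology for even $m$ established in Theorem \ref{thm:equivariant_twisted_rfh} to produce a noncontractible Reeb orbit via Lemma \ref{lem:noncontractible}.

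First, I would exploit the star-shapedness: write $\Sigma = \{\rho(z) z : z \in \mathbb{S}^{2n - 1}\}$ for some smooth positive function $\rho$, which is automatically $\varphi$-invariant because $\varphi$ is a unitary rotation preserving norms and $\Sigma$ is $\varphi$-invariant. The radial interpolation $\rho_\sigma := (1 - \sigma) + \sigma \rho$ then yields a smooth family of $\varphi$-invariant star-shaped hypersurfaces $\Sigma_\sigma$, all contained in $\mathbb{C}^n$ and all enclosing star-shaped Liouville domains. Choosing twisted defining Hamiltonians $H_\sigma \in \mathscr{F}_\varphi(\Sigma_\sigma)$ by the recipe of Lemma \ref{lem:defining_Hamiltonian} produces a twisted homotopy of Liouville domains in $\mathbb{C}^n$. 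Since every step of this construction is manifestly $\mathbb{Z}_m$-equivariant, Theorem \ref{thm:invariance} together with its $\mathbb{Z}_m$-equivariant refinement delivers a canonical isomorphism
\begin{equation*}
    \overline{\RFH}^\varphi(\Sigma/\mathbb{Z}_m) \cong \overline{\RFH}^\varphi(\mathbb{S}^{2n - 1}/\mathbb{Z}_m).
\end{equation*}

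Next, I would invoke Theorem \ref{thm:equivariant_twisted_rfh} with $m$ even, which supplies $\overline{\RFH}^\varphi_k(\Sigma/\mathbb{Z}_m) \cong \mathbb{Z}_2 \neq 0$ for every $k \in \mathbb{Z}$. Because each $k_j$ is coprime to $m$, the rotation $\varphi$ acts freely on $\mathbb{S}^{2n - 1}$ and hence on $\Sigma$, so $\Fix(\varphi\vert_\Sigma) = \emptyset$; by Proposition \ref{prop:kernel_hessian_contact} every critical point of $\mathscr{A}^H_\varphi$ is therefore a nonconstant twisted parametrised Reeb orbit. Nonvanishing of the equivariant homology then forces the existence of at least one $(\gamma,\tau) \in \mathscr{P}_\varphi(\Sigma,\lambda\vert_\Sigma)$ with $\tau \neq 0$. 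Since the projection $\pi \colon \Sigma \to \Sigma/\mathbb{Z}_m$ is a normal covering that intertwines the Reeb vector fields (the contact form $\lambda\vert_\Sigma$ being $\varphi$-invariant), $\pi \circ \gamma$ is a periodic Reeb orbit in $\Sigma/\mathbb{Z}_m$, and by Lemma \ref{lem:noncontractible} it is noncontractible.

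The main obstacle I anticipate is ensuring that the invariance argument of Theorem \ref{thm:invariance} genuinely descends to the $\mathbb{Z}_m$-equivariant chain complex. This rests on the freeness of the $\mathbb{Z}_m$-action on each $\Sigma_\sigma$ and on the ability to make all auxiliary data---adapted almost complex structures, Morse--Bott perturbations, and the $\sigma$-dependent continuation data interpolating between $(H_0, J_0)$ and $(H_1, J_1)$---simultaneously $\varphi$-twisted and $\mathbb{Z}_m$-equivariant. This is standard by an averaging argument, but must be carried out with care so that the continuation homomorphism is $\mathbb{Z}_m$-equivariant on the chain level, after which passing to the quotient chain complex yields the desired isomorphism.
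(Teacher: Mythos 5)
Your argument is correct and tracks the paper's own proof in structure: reduce to $\mathbb{S}^{2n-1}$ via a twisted homotopy of Liouville domains, invoke the invariance theorem, apply the computation of $\overline{\RFH}^\varphi_*(\mathbb{S}^{2n-1}/\mathbb{Z}_m)$ for even $m$, and extract a noncontractible orbit via Lemma \ref{lem:noncontractible}. The one genuine difference is the interpolation. You write $\Sigma = \{\rho(z)z : z \in \mathbb{S}^{2n-1}\}$ and take the arithmetic mean $\rho_\sigma = (1-\sigma) + \sigma\rho$, whereas the paper flows along the Liouville vector field $X$: it defines $\delta(x)$ by $\phi^X_{\delta(x)}(x) \in \mathbb{S}^{2n-1}$ and sets $\Sigma_\sigma := \{\phi^X_{\sigma\delta(x)}(x)\}$, which for the standard radial Liouville field on $\mathbb{C}^n$ amounts to a geometric (multiplicative) interpolation $|x|^{-\sigma}x$ of the radius. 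Both produce a smooth family of $\varphi$-invariant star-shaped hypersurfaces, so both qualify as twisted homotopies of Liouville domains, and Theorem \ref{thm:invariance} applies equally. The paper's version is slightly more robust conceptually because commutativity of $\varphi$ with the Liouville flow (established in Lemma \ref{lem:defining_Hamiltonian}) gives $\delta\circ\varphi=\delta$ and $\varphi(\Sigma_\sigma)=\Sigma_\sigma$ without appealing to the unitarity of $\varphi$; your version leans on the fact that $\varphi$ is a norm-preserving rotation so that $\rho$ is automatically $\varphi$-invariant, which is fine here but specific to the linear setting. Your closing paragraph flags the $\mathbb{Z}_m$-equivariance of the continuation data as a point requiring care, which the paper handles implicitly; your remarks there are accurate and do not introduce a gap.
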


\begin{proof}
	By assumption, $\Sigma$ bounds a star-shaped domain $D$ with respect to the origin. Thus $(D \cup \Sigma,d\lambda)$ is a Liouville domain with $\lambda$ given by \eqref{eq:Liouville_form}. By rescaling we may assume that $\mathbb{S}^{2n - 1} \subseteq D$. Define a smooth function
\begin{equation*}
	\delta \colon \Sigma \to \intoo[0]{-\infty,0}
\end{equation*}
\noindent by requiring $\delta(x)$ to be the unique number such that $\phi^X_{\delta(x)}(x) \in \mathbb{S}^{2n - 1}$, $x \in \Sigma$, where
	\begin{equation*}
		X = \frac{1}{2}\del[3]{x^j\frac{\partial}{\partial x^j} + y^j \frac{\partial}{\partial y^j}}
	\end{equation*}
	\noindent denotes the Liouville vector field. We claim that $\delta \circ \varphi = \delta$. Indeed, $\delta(\varphi(x))$ is the unique number such that $\phi^X_{\delta(\varphi(x))}(\varphi(x)) \in \mathbb{S}^{2n - 1}$. As the flow of $X$ and $\varphi$ commute by the proof of Lemma \ref{lem:defining_Hamiltonian}, we conclude that $\phi^X_{\delta(\varphi(x))}(x) \in \mathbb{S}^{2n - 1}$. Define a smooth family of star-shaped hypersurfaces $(\Sigma_\sigma)_{\sigma \in I}$
	\begin{equation*}
		\Sigma_\sigma := \cbr[1]{\phi^X_{\sigma\delta(x)}(x) : x \in \Sigma} \subseteq \mathbb{C}^n.
	\end{equation*}
	Then we compute
	\begin{align*}
		\varphi(\Sigma_\sigma) &= \cbr[1]{\varphi\del[1]{\phi^X_{\sigma\delta(x)}(x)} : x \in \Sigma}\\
		&= \cbr[1]{\phi^X_{\sigma\delta(x)}(\varphi(x)) : x \in \Sigma}\\
		&= \cbr[1]{\phi^X_{\sigma\delta(\varphi(x))}(\varphi(x)) : x \in \Sigma}\\
		&= \cbr[1]{\phi^X_{\sigma\delta(y)}(y) : y \in \varphi(\Sigma)}\\
		&= \cbr[1]{\phi^X_{\sigma\delta(y)}(y) : y \in \Sigma}\\
		&= \Sigma_\sigma
	\end{align*}
	\noindent for all $\sigma \in I$ and therefore we can find a twisted homotopy $(H_\sigma)_{\sigma \in I}$ of Liouville domains in $\mathbb{C}^n$. By Theorem \ref{thm:invariance} we have that
	\begin{equation*}
		\RFH^\varphi_*(\Sigma,\mathbb{C}^n) \cong \RFH_*^\varphi(\mathbb{S}^{2n - 1},\mathbb{C}^n),
	\end{equation*}
	\noindent giving rise to a canonical isomorphism of the associated $\mathbb{Z}_m$-equivariant twisted Rabinowitz--Floer homology
	\begin{equation*}
		\overline{\RFH}^\varphi_*(\Sigma/\mathbb{Z}_m) \cong \overline{\RFH}^\varphi_*(\mathbb{S}^{2n - 1}/\mathbb{Z}_m). 
	\end{equation*}
	But by Theorem \ref{thm:equivariant_twisted_rfh} the latter does not vanish as $m \geq 2$ is even.
\end{proof}

\section*{Acknowledgements}
	First of all I would like to thank my supervisor Urs Frauenfelder for his inspiring guidance. I owe thanks to Kai Cieliebak and Igor Uljarevic for many helpful discussions. Lastly, I also thank Will J. Merry for his constant encouragement during my Master's thesis as well as Felix Schlenk and the anonymous referee for improving the exposition of this paper.

\begin{appendix}
	\section{Twisted Loops in Universal Covering Manifolds}
\label{twisted_loops_on_universal_covering_manifolds}

In this Appendix, we will consider the category of topological manifolds rather than the category of smooth manifolds, because smoothness does not add much to the discussion. Free and based loop spaces are fundamental objects in Algebraic Topology, for a vast treatment of the geometry and topology of based as well as free loop spaces see for example \cite{loop_spaces:2015}. But so-called twisted loop spaces are not considered that much. 

\begin{theorem}[Twisted Loops in Universal Covering Manifolds]
	Let $(M,x)$ be a connected pointed topological manifold and $\pi \colon \tilde{M} \to M$ the universal covering. 
	\begin{enumerate}[label=\textup{(\alph*)}]
		\item Fix $[\eta] \in \pi_1(M,x)$ and denote by $U_\eta \subseteq \mathscr{L}(M,x)$ the path component corresponding to $[\eta]$ via the bijection $\pi_0(\mathscr{L}(M,x)) \cong \pi_1(M,x)$. For every $e,e' \in \pi^{-1}(x)$ and $\varphi \in \Aut_\pi(\tilde{M})$ such that $\varphi(e) = \tilde{\eta}_e(1)$, where $\tilde{\eta}_e$ denotes the unique lift of $\eta$ with $\tilde{\eta}_e(0) = e$, we have a commutative diagram of homeomorphisms
		\begin{equation}
			\label{cd:twisted}
			\qquad \begin{tikzcd}
				\mathscr{L}_\varphi(\tilde{M},e) \arrow[rr,"L_\psi"] & & \mathscr{L}_{\psi \circ \varphi \circ \psi^{-1}}(\tilde{M},e')\\
				& U_\eta \arrow[lu,"\Psi_e"] \arrow[ru,"\Psi_{e'}"'],
			\end{tikzcd}
		\end{equation}
		\noindent where $\psi \in \Aut_\pi(\tilde{M})$ is such that $\psi(e) = e'$, 
		\begin{equation*}
			\qquad L_\psi \colon \mathscr{L}_\varphi(\tilde{M},e) \to \mathscr{L}_{\psi \circ \varphi \circ \psi^{-1}}(\tilde{M},e'), \qquad L_\psi(\gamma) := \psi \circ \gamma,
		\end{equation*}
		\noindent and
		\begin{align*}
			\qquad& \Psi_e \colon U_\eta \to \mathscr{L}_\varphi(\tilde{M},e), & \Psi_e(\gamma) := \tilde{\gamma}_e,\\
			\qquad& \Psi_{e'} \colon U_\eta \to \mathscr{L}_{\psi \circ \varphi \circ \psi^{-1}}(\tilde{M},e'), & \Psi_{e'}(\gamma) := \tilde{\gamma}_{e'}.
		\end{align*}
		Moreover, $U_{c_x} \cong \mathscr{L}_\varphi(\tilde{M},e)$ via $\Psi_e$ if and only if $\varphi = \id_{\tilde{M}}$, where $c_x$ denotes the constant loop at $x$.
	\item For every $\varphi \in \Aut_\pi(\tilde{M})$ and $e,e' \in \pi^{-1}(x)$ we have a commutative diagram of isomorphisms
	\begin{equation*}
		\qquad \begin{tikzcd}
		\Aut_\pi(\tilde{M}) \arrow[rr,"C_\psi"] & & \Aut_\pi(\tilde{M})\\
		& \pi_1(M,x) \arrow[lu,"\Phi_e"] \arrow[ru,"\Phi_{e'}"'],
	\end{tikzcd}
	\end{equation*}
	\noindent where for $\psi \in \Aut_\pi(\tilde{M})$ sucht that $\psi(e) = e'$
	\begin{equation*}
		\qquad C_\psi \colon \Aut_\pi(\tilde{M}) \to \Aut_\pi(\tilde{M}), \qquad C_\psi(\varphi) := \psi \circ \varphi \circ \psi^{-1},
	\end{equation*}
	\noindent and
	\begin{align*}
		\qquad & \Phi_e \colon \pi_1(M,x) \to \Aut_\pi(\tilde{M}), & \Phi_e([\gamma]) := \varphi^e_{[\gamma]},\\
		\qquad & \Phi_{e'} \colon \pi_1(M,x) \to \Aut_\pi(\tilde{M}), & \Phi_{e'}([\gamma]) := \varphi^{e'}_{[\gamma]},
	\end{align*}
	\noindent with $\varphi^e_{[\gamma]}(e) = \tilde{\gamma}_e(1)$ and $\varphi^{e'}_{[\gamma]}(e') = \tilde{\gamma}_{e'}(1)$.
\item The projection
	\begin{equation*}
		\qquad \tilde{\pi}_x \colon \coprod_{\substack{\varphi \in \Aut_\pi(\tilde{M})\\e \in \pi^{-1}(x)}} \mathscr{L}_\varphi(\tilde{M},e) \to \mathscr{L}(M,x)
	\end{equation*}
	\noindent defined by $\tilde{\pi}_x(\gamma) := \pi \circ \gamma$ is a covering map with number of sheets coinciding with the cardinality of $\pi_1(M,x)$. Moreover, $\tilde{\pi}_x$ restricts to define a covering map
	\begin{equation*}
		\qquad \tilde{\pi}_x\vert_{\id_{\tilde{M}}} \colon \coprod_{e \in \pi^{-1}(x)} \mathscr{L}(\tilde{M},e) \to U_{c_x},
	\end{equation*}
	\noindent and $\tilde{\pi}_x$ gives rise to a principal $\Aut_\pi(\tilde{M})$-bundle. If $M$ admits a smooth structure, then this bundle is additionally a bundle of smooth Banach manifolds.
\end{enumerate}
	\label{thm:cd_twisted}
\end{theorem}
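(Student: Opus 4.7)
The entire theorem is an exercise in covering space theory applied to the loop space equipped with the compact-open topology, with the path and homotopy lifting properties doing most of the work. My plan is to treat the three parts in order, using unique lifting throughout, and to verify continuity once via a standard Lebesgue-number argument that applies uniformly to all three parts.

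For part (a), define $\Psi_e(\gamma) := \tilde{\gamma}_e$ using the unique path-lifting property of $\pi$. Since $[\gamma] = [\eta]$ in $\pi_1(M,x)$, the endpoint $\tilde{\gamma}_e(1)$ depends only on $[\gamma]$ by homotopy lifting, and by definition of $\varphi$ we have $\tilde{\gamma}_e(1) = \varphi(e)$; hence $\tilde{\gamma}_e \in \mathscr{L}_\varphi(\tilde{M},e)$. The inverse is $\tilde{\gamma} \mapsto \pi \circ \tilde{\gamma}$. Continuity of $\Psi_e$ in the compact-open topology is the one technical input: cover $\gamma(I)$ by finitely many evenly covered open sets $\{U_j\}$, pick $0 = t_0 < \cdots < t_N = 1$ so that $\gamma([t_{j-1},t_j]) \subseteq U_j$, and observe that the subbasic open sets around $\gamma$ consisting of loops $\gamma'$ with $\gamma'([t_{j-1},t_j]) \subseteq U_j$ lift fibrewise by composition with the local inverses of $\pi$, hence $\Psi_e$ is locally given by a continuous formula. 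Commutativity of the triangle is immediate from uniqueness: $\psi \circ \tilde{\gamma}_e$ is a lift of $\gamma$ starting at $\psi(e) = e'$, so it equals $\tilde{\gamma}_{e'}$; hence $L_\psi \circ \Psi_e = \Psi_{e'}$. For the last claim, if $\varphi = \id_{\tilde{M}}$ then $\Psi_e$ evidently maps $U_{c_x}$ onto $\mathscr{L}(\tilde{M},e) = \mathscr{L}_{\id}(\tilde{M},e)$; conversely, if $\Psi_e$ identifies $U_{c_x}$ with $\mathscr{L}_\varphi(\tilde{M},e)$, then every loop in $\mathscr{L}(\tilde{M},e)$ must in particular end at $\varphi(e)$, forcing $\varphi(e) = e$, and freeness of the deck action then yields $\varphi = \id_{\tilde{M}}$.

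For part (b), $\Phi_e$ is the classical isomorphism $\pi_1(M,x) \to \Aut_\pi(\tilde{M})$ from covering space theory, so I only need commutativity of the triangle. Evaluating at $e'$ and using $\psi(e) = e'$ together with the equivariance $\tilde{\gamma}_{e'} = \psi \circ \tilde{\gamma}_e$ from part (a),
\begin{equation*}
	\Phi_{e'}([\gamma])(e') = \tilde{\gamma}_{e'}(1) = \psi\bigl(\tilde{\gamma}_e(1)\bigr) = \psi\bigl(\Phi_e([\gamma])(e)\bigr) = (\psi \circ \Phi_e([\gamma]) \circ \psi^{-1})(e').
\end{equation*}
Since a deck transformation is determined by its value at a single point, this gives $\Phi_{e'}([\gamma]) = C_\psi(\Phi_e([\gamma]))$.

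For part (c), define $\tilde{\pi}_x$ by $\tilde{\pi}_x(\gamma) := \pi \circ \gamma$ on each component $\mathscr{L}_\varphi(\tilde{M},e)$. The fibre over $\gamma \in \mathscr{L}(M,x)$ is, by the unique path lifting property, $\{\tilde{\gamma}_e : e \in \pi^{-1}(x)\}$, and part (a) places each $\tilde{\gamma}_e$ in a unique $\mathscr{L}_\varphi(\tilde{M},e)$; so the fibre is in bijection with $\pi^{-1}(x)$, which has cardinality $|\pi_1(M,x)|$. Local triviality around $\gamma_0$ is built from the same subdivision $\{(U_j,[t_{j-1},t_j])\}$ as in (a): the subbasic neighbourhood $V \subseteq \mathscr{L}(M,x)$ this defines satisfies $\tilde{\pi}_x^{-1}(V) \cong V \times \pi^{-1}(x)$ via the lifting maps, giving the covering map structure; the restriction to loops in $U_{c_x}$ lifts each contractible loop to a loop in $\tilde{M}$, which gives the second covering. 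Finally, $\Aut_\pi(\tilde{M})$ acts on the total space by $\psi \cdot \gamma := \psi \circ \gamma$, sending $\mathscr{L}_\varphi(\tilde{M},e)$ to $\mathscr{L}_{\psi \varphi \psi^{-1}}(\tilde{M},\psi(e))$; freeness follows from freeness of the deck action on $\tilde{M}$ (evaluate at $\gamma(0)$), and transitivity on fibres is the classical statement that any two lifts of the same path differ by a deck transformation, upgrading the covering to a principal $\Aut_\pi(\tilde{M})$-bundle. In the smooth case the same trivializations are smooth and built from smooth charts, so the total space inherits the structure of a Banach manifold.

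The only genuinely substantial step is the continuous (and, where applicable, smooth) dependence of the lift on the base loop; everything else is bookkeeping with uniqueness of lifts. I expect that making the compact-open continuity argument precise, and in particular exhibiting the local trivializations of $\tilde{\pi}_x$ uniformly over the disjoint union, will be the longest piece of the write-up.
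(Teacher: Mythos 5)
Your proposal is correct and follows the same general outline as the paper: unique path lifting gives the bijections $\Psi_e$, uniqueness of deck transformations (determined by one point) gives the commutativity in both (a) and (b), and freeness of the deck action handles the ``moreover'' clause. The two technical points where your route diverges are worth noting. For the continuity of $\Psi_e$, you argue directly in the compact-open topology via a Lebesgue-number subdivision and local sections of $\pi$; the paper instead equips $\tilde M$ with the pullback $\bar d \circ \pi$ of a bounded metric on $M$, so that $\pi$ is a local isometry, $\Psi_e$ becomes an isometry of supremum metrics, and the identification of uniform-convergence with compact-open topology is cited once and for all. Both are standard. For local triviality in part (c), you propose constructing trivializations over small subbasic neighbourhoods of individual loops; the paper makes the more global and slicker observation that the path components $U_\eta$ of $\mathscr L(M,x)$ are themselves evenly covered, with $\tilde\pi_x^{-1}(U_\eta) = \coprod_{\psi \in \Aut_\pi(\tilde M)} \mathscr L_{\psi\varphi\psi^{-1}}(\tilde M,\psi(e))$ following directly from part (a), so the $U_\eta$ serve as the trivializing sets. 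Finally, your treatment of the smooth Banach manifold structure is rather terse (``the same trivializations are smooth''); this does not by itself explain why each $\mathscr L_\varphi(\tilde M,e)$ is a Banach manifold. The paper grounds this in the path-loop fibration $\mathscr L(\tilde M,e) \hookrightarrow \mathscr P(\tilde M,e) \xrightarrow{\ev_1} \tilde M$, realizing $\mathscr L_\varphi(\tilde M,e) = \ev_1^{-1}(\varphi(e))$ as a regular level set, and on the base side uses the loop-loop fibration $\mathscr L(M,x) \hookrightarrow \mathscr L M \xrightarrow{\ev_0} M$; if you want a self-contained write-up you should include an analogous input.
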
 

\begin{proof}
	For proving part (a), fix a path class $[\gamma] \in \pi_1(M,x)$. As any topological manifold is Hausdorff, paracompact and locally metrisable by definition, the Smirnov Metrisation Theorem \cite[Theorem~42.1]{munkres:topology:2000} implies that $M$ is metrisable. Let $d$ be a metric on $M$ and $\bar{d}$ be the standard bounded metric corresponding to $d$, that is,
	\begin{equation*}
		\bar{d}(x,y) = \min\cbr[0]{d(x,y),1} \qquad \forall x,y \in M.
	\end{equation*}
	The metric $\bar{d}$ induces the same topology on $M$ as $d$ by \cite[Theorem~20.1]{munkres:topology:2000}. Topologise the based loop space $\mathscr{L}(M,x) \subseteq \mathscr{L}M$ as a subspace of the free loop space on $M$, where $\mathscr{L}M$ is equipped with the topology of uniform convergence, that is, with the supremum metric
	\begin{equation*}
		\bar{d}_\infty(\gamma,\gamma') = \sup_{t \in \mathbb{S}^1}\bar{d}\del[1]{\gamma(t),\gamma'(t)} \qquad \forall \gamma,\gamma' \in \mathscr{L}M.
	\end{equation*}
	There is a canonical pseudometric on the universal covering manifold $\tilde{M}$ induced by $\bar{d}$ given by $\bar{d} \circ \pi$. As every pseudometric generates a topology, we topologise the based twisted loop space $\mathscr{L}_\varphi(\tilde{M},e) \subseteq \mathscr{P}\tilde{M}$ as a subspace of the free path space on $\tilde{M}$ for every $e \in \pi^{-1}(x)$ via the supremum metric $\tilde{d}_\infty$ corresponding to $\bar{d} \circ \pi$. In fact, $\tilde{d}_\infty$ is a metric as if $\tilde{d}_\infty(\gamma,\gamma') = 0$, then by definition of $\tilde{d}_\infty$ we have that $\pi(\gamma) = \pi(\gamma')$. But as $\gamma(0) = e = \gamma'(0)$, we conclude $\gamma = \gamma'$ by the unique lifting property of paths \cite[Corollary~11.14]{lee:tm:2011}. Note that the resulting topology of uniform convergence on $\mathscr{L}_\varphi(\tilde{M},e)$ coincides with the compact-open topology by \cite[Theorem~46.8]{munkres:topology:2000} or \cite[Proposition~A.13]{hatcher:at:2001}. In particular, the topology of uniform convergence does not depend on the choice of a metric (see \cite[Corollary~46.9]{munkres:topology:2000}). It follows from \cite[Theorem~11.15~(b)]{lee:tm:2011}, that $\Psi_e$ and $\Psi_{e'}$ are well-defined. Moreover, it is immediate by the fact that the projection $\pi \colon \tilde{M} \to M$ is an isometry with respect to the above metric, that $\Psi_e$ and $\Psi_{e'}$ are continuous with continuous inverse given by the composition with $\pi$. It is also immediate that $L_\psi$ is continuous with continuous inverse $L_{\psi^{-1}}$.

	Next we show that the diagram \eqref{cd:twisted} commutes. Note that
	\begin{equation*}
		\pi \circ L_\psi \circ \Psi_e = \pi \circ \Psi_e = \id_{U_\eta} = \pi \circ \Psi_{e'},
	\end{equation*}
	\noindent thus by
	\begin{equation*}
		(L_\psi \circ \Psi_e(\gamma))(0) = \psi(\tilde{\gamma}_e(0)) = \psi(e) = e' = \tilde{\gamma}_{e'}(0) = \Psi_{e'}(\gamma)(0)
	\end{equation*}
	\noindent and by uniqueness it follows that 
	\begin{equation*}
		L_\psi \circ \Psi_e = \Psi_{e'}.
	\end{equation*}
	In particular
	\begin{equation*}
		\Psi_{e'}(1) = (L_\psi \circ \Psi_e)(1) = \psi(\varphi(e)) = (\psi \circ \varphi \circ \psi^{-1})(e'),
	\end{equation*}
	\noindent and thus $\Psi_{e'}(\gamma) \in \mathscr{L}_{\psi \circ \varphi \circ \psi^{-1}}(\tilde{M},e')$. Consequently, the homeomorphism $\Psi_{e'}$ is well-defined.

	Recall, that by the Monodromy Theorem \cite[Theorem~11.15~(b)]{lee:tm:2011} 
	\begin{equation*}
		\gamma \simeq \gamma' \qquad \Leftrightarrow \qquad \Psi_e(\gamma)(1) = \Psi_e(\gamma')(1)
	\end{equation*}
	\noindent for all paths $\gamma$ and $\gamma'$ in $M$ starting at $x$ and ending at the same point. Note that the statement of the the Monodromy Theorem is an if-and-only-if statement since $\tilde{M}$ is simply connected.

	Suppose $\gamma \in \mathscr{L}(M,x)$ is contractible. Then $\gamma \simeq c_x$, implying $e \in \Fix(\varphi)$. But the only deck transformation of $\pi$ fixing any point of $\tilde{M}$ is $\id_{\tilde{M}}$ by \cite[Proposition~12.1~(a)]{lee:tm:2011}.

	Conversely, assume that $\gamma \in \mathscr{L}(M,x)$ is not contractible. Then we have that $\Psi_e(\gamma)(1) \neq e$. Indeed, if $\Psi_e(\gamma)(1) = e$, then $\gamma \simeq c_x$ and consequently, $\gamma$ would be contractible. As normal covering maps have transitive automorphism groups by \cite[Corollary~12.5]{lee:tm:2011}, there exists $\psi \in \Aut_\pi(\tilde{M}) \setminus \cbr[0]{\id_{\tilde{M}}}$ such that $\Psi_e(\gamma)(1) = \psi(e)$.  

	For proving part (b), observe that $\Phi_e$ and $\Phi_{e'}$ are isomorphisms follows from \cite[Corollary~12.9]{lee:tm:2011}. Moreover, it is also clear that $C_\psi$ is an isomorphism with inverse $C_{\psi^{-1}}$. Let $[\gamma] \in \pi_1(M,x)$. Then using part (a) we compute
\allowdisplaybreaks
	\begin{align*}
	(C_\psi \circ \Phi_e)[\gamma](e') &= (\psi \circ \Phi_e[\gamma] \circ \psi^{-1})(e')\\
	&= \psi\del[1]{\varphi_{[\gamma]}^e(e)}\\
	&= \psi(\tilde{\gamma}_e(1))\\
	&= (L_\psi \circ \Psi_e)(\gamma)(1)\\
	&= \Psi_{e'}(\gamma)(1)\\
	&= \tilde{\gamma}_{e'}(1)\\
	&= \varphi^{e'}_{[\gamma]}(e')\\
	&= \Phi_{e'}[\gamma](e').
\end{align*}
Thus by uniqueness \cite[Proposition~12.1~(a)]{lee:tm:2011}, we conclude
\begin{equation*}
	C_\psi \circ \Phi_e = \Phi_{e'}.
\end{equation*}
Finally for proving (c), define a metric $\tilde{d}_\infty$ on 
	\begin{equation*}
		E := \coprod_{\substack{\varphi \in \Aut_\pi(\tilde{M})\\e \in \pi^{-1}(x)}} \mathscr{L}_\varphi(\tilde{M},e)
	\end{equation*}
	\noindent by
	\begin{equation*}
		\tilde{d}_\infty(\gamma,\gamma') := \begin{cases}
			\bar{d}_\infty\del[1]{\pi(\gamma),\pi(\gamma')} & \gamma,\gamma' \in \mathscr{L}_\varphi(\tilde{M},e),\\
			1 & \text{else}.
		\end{cases}
	\end{equation*}
	Then the induced topology coincides with the disjoint union topology and with respect to this topology, $\tilde{\pi}_x$ is continuous. So left to show is that $\tilde{\pi}_x$ is a covering map. Surjectivity is clear. So let $\gamma \in \mathscr{L}(M,x)$. Then $\gamma \in U_\eta$ for some $[\eta] \in \pi_1(M,x)$. Now note that $U_\eta$ is open in $\mathscr{L}(M,x)$ and by part (a) we conclude
	\begin{equation}
		\label{eq:twisted_fibre}
		\tilde{\pi}_x^{-1}(U_\eta) = \coprod_{\psi \in \Aut_\pi(\tilde{M})} \mathscr{L}_{\psi \circ \varphi \circ \psi^{-1}}(\tilde{M},\psi(e))
	\end{equation}
	\noindent for some fixed $e \in \pi^{-1}(x)$ and $\varphi \in \Aut_\pi(\tilde{M})$ such that $\varphi(e) = \tilde{\eta}_e(1)$. 


	As the cardinality of the fibre $\pi^{-1}(x)$ and of $\Aut_\pi(\tilde{M})$ coincides with the cardinality of the fundamental group $\pi_1(M,x)$ by \cite[Corollary~11.31]{lee:tm:2011} and part (b), we conclude that the number of sheets is equal to the cardinality of the fundamental group $\pi_1(M,x)$.

  Equip $\Aut_\pi(\tilde{M})$ with the discrete topology. As the fundamental group of every topological manifold is countable by \cite[Theorem~7.21]{lee:tm:2011}, we have that $\Aut_\pi(\tilde{M})$ is a discrete topological Lie group. Now label the distinct path classes in $\pi_1(M,x)$ by $\beta \in B$ and for fixed $e \in \pi^{-1}(x)$ define local trivialisations
 \begin{equation*}
	 (\tilde{\pi}_x,\alpha_\beta) \colon \tilde{\pi}_x^{-1}(U_\beta)  \xrightarrow{\cong} U_\beta \times \Aut_\pi(\tilde{M}),
 \end{equation*} 
 \noindent making use of \eqref{eq:twisted_fibre} by
 \begin{equation*}
	 \alpha_\beta(\gamma) := \psi^{-1},
 \end{equation*}
 \noindent whenever $\gamma \in \mathscr{L}_{\psi \circ \varphi \circ \psi^{-1}}(\tilde{M},\psi(e))$. Consequently, $\tilde{\pi}_x$ is a fibre bundle with discrete fibre $\Aut_\pi(\tilde{M})$ and bundle atlas $(U_\beta,\alpha_\beta)_{\beta \in B}$. Define a free right action
 \begin{equation*}
	 E \times \Aut_\pi(\tilde{M}) \to E, \qquad \gamma \cdot \xi := \xi^{-1} \circ \gamma.
 \end{equation*}
  Then $\alpha_\beta$ is $\Aut_\pi(\tilde{M})$-equivariant with respect to this action for all $\beta \in B$. Indeed, using again the commutative diagram \eqref{cd:twisted} we compute
 \begin{equation*}
	 \alpha_\beta(\gamma \cdot \xi) = \alpha_\beta(\xi^{-1} \circ \gamma) = \del[1]{\xi^{-1} \circ \psi}^{-1} = \psi^{-1} \circ \xi = \alpha_\beta(\gamma) \circ \xi
 \end{equation*}
 \noindent for all $\xi \in \Aut_\pi(\tilde{M})$ and $\gamma \in \mathscr{L}_{\psi \circ \varphi \circ \psi^{-1}}(\tilde{M},\psi(e))$. Note, that here we use again the fact that $\Aut_\pi(\tilde{M})$ acts transitively on the fibre $\pi^{-1}(x)$.

 Suppose that $M$ admits a smooth structure. Then for every compact smooth manifold $N$ we have that the mapping space $C(N,M)$ admits the structure of a smooth Banach manifold by \cite{wittmann:loop_space:2019}. By \cite[Theorem~1.1~p.~24]{loop_spaces:2015}, there is a smooth fibre bundle, called the \bld{loop-loop fibre bundle},
 \begin{equation*}
		 \mathscr{L}(M,x) \hookrightarrow \mathscr{L}M \xrightarrow{\ev_0} M 
 \end{equation*}
 \noindent where
 \begin{equation*}
	 \ev_0 \colon \mathscr{L} M \to M, \qquad \ev_0(\gamma) := \gamma(0).
 \end{equation*}
 Thus the based loop space $\mathscr{L}(M,x) = \ev_0^{-1}(x)$ on $M$ is a smooth Banach manifold by the implicit function theorem \cite[Theorem~A.3.3]{mcduffsalamon:J-holomorphic_curves:2012} for all $x \in M$. Likewise, by \cite[Theorem~1.2~p.~25]{loop_spaces:2015}, there is a smooth fibre bundle, called the \bld{path-loop fibre bundle}, 
  \begin{equation*}
	  \mathscr{L}(\tilde{M},e) \hookrightarrow \mathscr{P}(\tilde{M},e) \xrightarrow{\ev_1} \tilde{M}, 
 \end{equation*}
 \noindent where
 \begin{equation*}
	 \mathscr{P}(\tilde{M},e) := \{\gamma \in C(I,\tilde{M}) : \gamma(0) = e\}
 \end{equation*}
 \noindent denotes the based path space and
 \begin{equation*}
	 \ev_1 \colon \mathscr{P}(\tilde{M},e) \to \tilde{M}, \qquad \ev_1(\gamma) := \gamma(1).
 \end{equation*}
 Therefore, the twisted loop space $\mathscr{L}_\varphi(\tilde{M},e) = \ev_1^{-1}(\varphi(e))$ is also a smooth Banach manifold for all $\varphi \in \Aut_\pi(\tilde{M})$ and $e \in \pi^{-1}(x)$ by the implicit function theorem \cite[Theorem~A.3.3]{mcduffsalamon:J-holomorphic_curves:2012}. As the fundamental group $\pi_1(M,x)$ is countable, the topological space $E$ has only countably many connected components being smooth Banach manifolds and thus the total space itself is a smooth Banach manifold. Finally, $\Aut_\pi(\tilde{M})$ is trivially a Banach manifold with $\dim \Aut_\pi(\tilde{M}) = 0$ as a discrete Lie group.
\end{proof}

\begin{corollary}
	\label{cor:cd_twisted_abelian}
	Let $(M,x)$ be a connected pointed topological manifold and denote by $\pi \colon \tilde{M} \to M$ the universal covering of $M$. Assume that $\pi_1(M,x)$ is abelian.
	\begin{enumerate}[label=\textup{(\alph*)}]
		\item Fix a path class $[\eta] \in \pi_1(M,x)$. For every $e,e' \in \pi^{-1}(x)$ and deck transformation $\varphi \in \Aut_\pi(\tilde{M})$ such that $\varphi(e) = \tilde{\eta}_e(1)$, we have a commutative diagram of homeomorphisms
		\begin{equation*}
			\qquad \begin{tikzcd}
				\mathscr{L}_\varphi(\tilde{M},e) \arrow[rr,"L_\psi"] & & \mathscr{L}_\varphi(\tilde{M},e')\\
				& U_\eta \arrow[lu,"\Psi_e"] \arrow[ru,"\Psi_{e'}"'],
			\end{tikzcd}
		\end{equation*}
		\noindent where $\psi \in \Aut_\pi(\tilde{M})$ is such that $\psi(e) = e'$.
	\item For every $\varphi \in \Aut_\pi(\tilde{M})$ we have that $\Phi_e = \Phi_{e'}$ for all $e,e' \in \pi^{-1}(x)$. 
	\end{enumerate}
\end{corollary}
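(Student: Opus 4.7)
The plan is to deduce both statements directly from Theorem \ref{thm:cd_twisted} using the single observation that, under the hypothesis $\pi_1(M,x)$ abelian, the deck transformation group $\Aut_\pi(\tilde{M})$ is itself abelian. This is immediate from part (b) of Theorem \ref{thm:cd_twisted}, since $\Phi_e \colon \pi_1(M,x) \to \Aut_\pi(\tilde{M})$ is a group isomorphism.

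First I would prove part (a). Start from the commutative diagram \eqref{cd:twisted} in Theorem \ref{thm:cd_twisted}(a). Since $\Aut_\pi(\tilde{M})$ is abelian, conjugation is trivial:
\begin{equation*}
	\psi \circ \varphi \circ \psi^{-1} = \varphi \qquad \forall \psi \in \Aut_\pi(\tilde{M}).
\end{equation*}
In particular the twisted loop space appearing in the upper right corner of \eqref{cd:twisted} simplifies,
\begin{equation*}
	\mathscr{L}_{\psi \circ \varphi \circ \psi^{-1}}(\tilde{M},e') = \mathscr{L}_\varphi(\tilde{M},e'),
\end{equation*}
and the commutative triangle with homeomorphisms $\Psi_e$, $\Psi_{e'}$, $L_\psi$ asserted by Theorem \ref{thm:cd_twisted}(a) becomes exactly the one claimed in the corollary.

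Part (b) is then a one-line consequence of Theorem \ref{thm:cd_twisted}(b). Since $\Aut_\pi(\tilde{M})$ is abelian, the inner automorphism $C_\psi$ is the identity for every $\psi \in \Aut_\pi(\tilde{M})$, so the commutative triangle in part (b) of the theorem collapses to $\Phi_{e'} = C_\psi \circ \Phi_e = \Phi_e$. As one may choose $\psi \in \Aut_\pi(\tilde{M})$ with $\psi(e) = e'$ for any pair $e,e' \in \pi^{-1}(x)$ (by transitivity of the deck group action on the fibre, \cite[Corollary~12.5]{lee:tm:2011}), this yields the equality for all $e,e' \in \pi^{-1}(x)$.

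No genuine obstacle is expected here, since the content of the corollary is the algebraic simplification ``abelian fundamental group $\Rightarrow$ trivial conjugation'' grafted onto the already-established Theorem \ref{thm:cd_twisted}. The only thing to verify carefully is that the chosen $\psi$ exists, which is standard for normal coverings.
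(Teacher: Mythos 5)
Your proposal is correct and takes the only natural route: since $\Phi_e \colon \pi_1(M,x) \to \Aut_\pi(\tilde{M})$ is a group isomorphism by Theorem \ref{thm:cd_twisted}(b), an abelian fundamental group forces $\Aut_\pi(\tilde{M})$ to be abelian, so conjugation in the deck group is trivial; substituting $\psi \circ \varphi \circ \psi^{-1} = \varphi$ and $C_\psi = \id$ into the two commutative diagrams of Theorem \ref{thm:cd_twisted} immediately gives both parts. The paper states this as a corollary without a written proof precisely because this is the intended one-step deduction, and you have also correctly flagged the only point requiring verification, namely the existence of $\psi$ with $\psi(e)=e'$, which follows from transitivity of the deck action on the fibre of a normal covering.
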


Lemma \ref{lem:noncontractible} now follows from part (a) of Theorem \ref{thm:cd_twisted}. Indeed, by assumption $\varphi \in \Aut_\pi(\Sigma) \setminus \{\id_\Sigma\}$ and using the long exact sequence of homotopy groups of a fibration \cite[Theorem~4.41]{hatcher:at:2001}, there is a short exact sequence 
\begin{equation*}
	\begin{tikzcd}[column sep = scriptsize]
		0 \arrow[r] & \pi_1(\Sigma,x) \arrow[r] & \pi_1(\Sigma/\mathbb{Z}_m,\pi(x)) \arrow[r] & \pi_0(\mathbb{Z}_m) \arrow[r] & 0.
	\end{tikzcd}
\end{equation*}
In particular, by \cite[Corollary~12.9]{lee:tm:2011} we conclude
\begin{equation*}
	\Aut_\pi(\Sigma) \cong \pi_1(\Sigma/\mathbb{Z}_m,\pi(x)) \cong \mathbb{Z}_m \cong \{\id_\Sigma,\varphi,\dots,\varphi^{m - 1}\}.
\end{equation*}

Finally, we discuss a smooth structure on the continuous free twisted loop space of a smooth manifold.


\begin{lemma}
	\label{lem:free_twisted_loop_space}
	Let $M$ be a smooth manifold and $\varphi \in \Diff(M)$. Then the continuous free twisted loop space $\mathscr{L}_\varphi M$ is the pullback of
	\begin{equation*}
		(\ev_0,\ev_1) \colon \mathscr{P}M \to M \times M, \qquad \gamma \mapsto (\gamma(0),\gamma(1)),
	\end{equation*}
	\noindent where we abbreviate $\mathscr{P}M := C(I,M)$, along the graph of $\varphi$
	\begin{equation*}
		\Gamma_\varphi \colon M \to M \times M, \qquad \Gamma_\varphi(x) := (x,\varphi(x)),
	\end{equation*}
	\noindent in the category of smooth Banach manifolds. Moreover, we have that
	\begin{equation*}
		T_\gamma \mathscr{L}_\varphi M = \{X \in \Gamma^0(\gamma^*TM) : X(1) = D\varphi(X(0))\}
	\end{equation*}
	\noindent for all $\gamma \in \mathscr{L}_\varphi M$.
\end{lemma}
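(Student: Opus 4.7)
The plan is to identify $\mathscr{L}_\varphi M$ with the asserted fibre product set-theoretically, then promote this to an identification of smooth Banach manifolds via transversality, and finally read off the tangent space from the standard formula for tangent spaces of pullbacks.

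First, I would observe that a continuous map $\gamma \colon \mathbb{R} \to M$ with $\gamma(t + 1) = \varphi(\gamma(t))$ is uniquely determined by its restriction to $I$, which satisfies $\gamma(1) = \varphi(\gamma(0))$; conversely, any continuous path $\gamma \colon I \to M$ with $\gamma(1) = \varphi(\gamma(0))$ extends uniquely to a continuous twisted loop via the recursion $\gamma(t + k) := \varphi^k(\gamma(t))$ for $t \in I$ and $k \in \mathbb{Z}$. This yields a bijection between $\mathscr{L}_\varphi M$ and the set-theoretic fibre product
\[
\cbr[0]{(\gamma, x) \in \mathscr{P}M \times M : (\gamma(0), \gamma(1)) = (x, \varphi(x))} = \mathscr{P}M \times_{M \times M} M.
\]

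Next, I would equip this fibre product with the structure of a smooth Banach manifold. By \cite{wittmann:loop_space:2019}, which is already invoked in the proof of Theorem~\ref{thm:cd_twisted}~(c), the path space $\mathscr{P}M$ is a smooth Banach manifold with tangent space $T_\gamma \mathscr{P}M = \Gamma^0(\gamma^* TM)$. A parametrised geodesic-interpolation argument using a Riemannian metric and a partition of unity on $I$ shows that for every $\gamma \in \mathscr{P}M$ and every $(v_0, v_1) \in T_{(\gamma(0),\gamma(1))}(M \times M)$ there is a continuous vector field along $\gamma$ with these boundary values, so that
\[
(\ev_0, \ev_1) \colon \mathscr{P}M \to M \times M
\]
is a smooth submersion. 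Since $\Gamma_\varphi$ is a smooth embedding (in fact a diffeomorphism of $M$ onto its image), it is automatically transverse to $(\ev_0, \ev_1)$, and the implicit function theorem \cite[Theorem~A.3.3]{mcduffsalamon:J-holomorphic_curves:2012} endows $\mathscr{L}_\varphi M \cong (\ev_0, \ev_1)^{-1}(\Gamma_\varphi(M))$ with the structure of a smooth Banach submanifold of $\mathscr{P}M$; the universal property in the smooth Banach category is then inherited from that of a transverse preimage.

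For the tangent space, I would apply the standard formula for tangent spaces of transverse pullbacks: at $\gamma \in \mathscr{L}_\varphi M$ a tangent vector corresponds to a pair $(X, v) \in T_\gamma \mathscr{P}M \times T_{\gamma(0)} M$ satisfying $D(\ev_0, \ev_1)_\gamma(X) = D\Gamma_{\varphi, \gamma(0)}(v)$, that is, $X(0) = v$ and $X(1) = D\varphi(v)$. Eliminating $v = X(0)$ and projecting to the first factor yields
\[
T_\gamma \mathscr{L}_\varphi M = \cbr[0]{X \in \Gamma^0(\gamma^* TM) : X(1) = D\varphi(X(0))},
\]
as claimed.

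The main obstacle is the submersion statement for $(\ev_0, \ev_1)$ on the continuous path space; everything else is formal nonsense about transverse preimages once this is in hand. A minor secondary subtlety is that $\mathscr{L}_\varphi M$ was originally defined to consist of \emph{smooth} twisted loops, but the present lemma explicitly considers the continuous version $C(I,M)$, so no smoothing argument enters.
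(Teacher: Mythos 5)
Your proposal is correct and follows essentially the same route as the paper: realise $\mathscr{L}_\varphi M$ as the preimage of $\Gamma_\varphi(M)$ under $f=(\ev_0,\ev_1)$, invoke transversality plus the implicit function theorem in the Banach category, then read off the tangent space from the pullback formula. The only differences are cosmetic. You prove the stronger statement that $(\ev_0,\ev_1)$ is a submersion, whereas the paper only establishes transversality to $\Gamma_\varphi(M)$, i.e.\ surjectivity of the composite $T_\gamma\mathscr{P}M \to T_{(x,\varphi(x))}(M\times M)/T_{(x,\varphi(x))}\Gamma_\varphi(M) \cong T_{\varphi(x)}M$; concretely this lets the paper get away with a single cutoff supported near $t=1$ rather than interpolating at both endpoints. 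Also, since you are working with Banach manifolds, being a submersion (and likewise transversality in the sense of \cite[Proposition~2.4]{lang:dg:1999}) requires the kernel of the differential to be complemented, not merely that the differential be surjective; the paper makes this explicit by exhibiting the finite-dimensional complement $V=\{\beta X_v : v\in T_{\varphi(x)}M\}$, whereas your write-up leaves it implicit. It is, of course, automatic here because the target $T_{\varphi(x)}M$ (or $T_{(x,\varphi(x))}(M\times M)$ in your version) is finite-dimensional, but you should say so.
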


\begin{proof}
	Write $f := (\ev_0,\ev_1)$. Then
	\begin{equation*}
		\mathscr{L}_\varphi M = f^{-1}(\Gamma_\varphi(M)).
	\end{equation*}
	Thus in order to show that the free twisted loop space $\mathscr{L}_\varphi M$ is a smooth Banach manifold, it is enough to show that $f$ is transverse to the properly embedded smooth submanifold $\Gamma_\varphi(M) \subseteq M \times M$. By \cite[Proposition~2.4]{lang:dg:1999} we need to show that the composition
	\begin{equation*}
		\Phi_\gamma\colon T_\gamma\mathscr{P}M \xrightarrow{Df_\gamma} T_{(x,\varphi(x))}(M \times M) \to T_{(x,\varphi(x))}(M \times M)/T_{(x,\varphi(x))}\Gamma_\varphi(M)
	\end{equation*}
	\noindent is surjective and $\ker \Phi_\gamma$ is complemented for all $\gamma \in f^{-1}(\Gamma_\varphi(M))$, where we abbreviate $x := \gamma(0)$. Note that we have a canonical isomorphism
	\begin{equation*}
		T_{(x,\varphi(x))}(M \times M)/T_{(x,\varphi(x))}\Gamma_\varphi(M) \to T_{\varphi(x)}M, \quad [(v,u)] := u - D\varphi(v).
	\end{equation*}
	Under this canonical isomorphism, the linear map $\Phi_\gamma$ is given by
	\begin{equation*}
		\Phi_\gamma(X) = X(1) - D\varphi(X(0)), \qquad \forall X \in \Gamma^0(\gamma^*TM).
	\end{equation*}
	Fix a Riemannian metric on $M$ and let $X_v \in \Gamma(\gamma^*TM)$ be the unique parallel vector field with $X_v(1) = v \in T_{\varphi(x)}M$. Fix a cutoff function $\beta \in C^\infty(I)$ such that $\supp \beta \subseteq \intcc[1]{\frac{1}{2},1}$ and $\beta = 1$ in a neighbourhood of $1$. Then $\Phi_\gamma(\beta X_v) = v$ and consequently, $\Phi_\gamma$ is surjective. Moreover
	\begin{equation*}
		\ker \Phi_\gamma = \cbr[0]{X \in \Gamma^0(\gamma^*TM) : X(1) = D\varphi(X(0))}
	\end{equation*}
	\noindent is complemented by the finite-dimensional vector space 
	\begin{equation*}
		V := \cbr[0]{\beta X_v \in \Gamma(\gamma^*TM) : v \in T_{\varphi(x)}M}.
	\end{equation*}
	Indeed, any $X \in \Gamma^0(\gamma^*TM)$ can be decomposed uniquely as
	\begin{equation*}
		X = X - \beta X_v + \beta X_v, \qquad v := X(1) - D\varphi(X(0)).
	\end{equation*}
	Abbreviating $Y := X - \beta X_v \in \Gamma^0(\gamma^*TM)$, we have that
	\begin{equation*}
		Y(1) = D\varphi(X(0)) = D\varphi(Y(0)),
	\end{equation*}
	\noindent implying $Y \in \ker \Phi_\gamma$. Thus $\mathscr{L}_\varphi M$ is a smooth Banach manifold.

	Now note that $\mathscr{L}_\varphi M$ can be identified with the pullback
	\begin{equation*}
		f^*\mathscr{P}M = \{(x,\gamma) \in M \times \mathscr{P}M : (\gamma(0),\gamma(1)) = \del[0]{x,\varphi(x)}\},
	\end{equation*}
	\noindent making the diagram 
	\begin{equation*}
		\begin{tikzcd}
			f^* \mathscr{P}M \arrow[r,"\pr_2"]\arrow[d,"\pr_1"'] & \mathscr{P}M \arrow[d,"f"]\\
			M \arrow[r,"\Gamma_\varphi"'] & M \times M
		\end{tikzcd}
	\end{equation*}
	\noindent commute, via the homeomorphism
	\begin{equation*}
		\mathscr{L}_\varphi M \to f^*\mathscr{P}M, \qquad \gamma \mapsto (\gamma(0),\gamma).
	\end{equation*}
	Finally, one computes
	\begin{equation*}
		T_{(x,\gamma)}f^*\mathscr{P}M = \{(v,X) \in T_xM \times T_\gamma \mathscr{P}M : Df_\gamma X = D\Gamma_\varphi\vert_x(v)\}
	\end{equation*}
	\noindent for all $(x,\gamma) \in f^*\mathscr{P}M$.
\end{proof}

\begin{remark}
	Using Lemma \ref{lem:free_twisted_loop_space} one should be able to prove similar results as in Theorem \ref{thm:cd_twisted} in the case of free twisted loop spaces. However, in the non-abelian case the situation gets much more complicated as in general it is not true, that lifts of conjugated elements of the fundamental group lie in the same free twisted loop space by \cite[Theorem~1.6~(i)]{loop_spaces:2015}.
\end{remark}
	
\end{appendix}

\printbibliography

\end{document}